\newcommand\ds{\displaystyle}
\newcommand\tn{\textnormal}
\def\fcar{{\mathbf 1}}
\newtheorem{theorem}{Theorem}[section]
\newtheorem{corollary}[theorem]{Corollary}
\newtheorem{proposition}[theorem]{Proposition}
\newtheorem{lemma}[theorem]{Lemma}
\newtheorem{remark}[theorem]{Remark}
\newtheorem{algorithm}{Algorithm}
\numberwithin{equation}{section}
\title{Global Carleman estimates for waves and applications\footnote{Partially supported by the Agence Nationale de la Recherche (ANR, France), Project CISIFS number NT09-437023 and the University Paul Sabatier (Toulouse 3), AO PICAN.}}
\author{
Lucie Baudouin$^{1,2,}$\footnote{e-mail: {\tt baudouin@laas.fr}}\\
{\it\footnotesize $^{1}$ CNRS, LAAS, 7 avenue du colonel Roche, F-31400 Toulouse, France;}\\
{\it\footnotesize $^{2}$ Univ de Toulouse, LAAS, F-31400, Toulouse, France;}\\
Maya de Buhan$^{3,}$\footnote{e-mail: {\tt maya.de-buhan@parisdescartes.fr}}\\
{\it\footnotesize $^{3}$ CNRS, UMR 8145, MAP5, Université Paris Descartes, Sorbonne Paris Cité, France;
}\\
Sylvain Ervedoza$^{4,5,}$\footnote{e-mail: {\tt ervedoza@math.univ-toulouse.fr}}\\
{\it\footnotesize $^{4}$ CNRS, Institut de Math\'ematiques de Toulouse UMR 5219 ; F-31400 Toulouse, France;}\\
{\it\footnotesize $^{5}$ Univ de Toulouse, IMT, F-31400 Toulouse, France.}
}
\begin{document}

\maketitle

\abstract{In this article, we extensively develop Carleman estimates for the wave equation and give some applications. We focus on the case of an observation of the flux on a part of the boundary satisfying the Gamma conditions of Lions. We will then consider two applications. The first one deals with the exact controllability problem for the wave equation with potential. Following the duality method proposed by Fursikov and Imanuvilov in the context of parabolic equations, we propose a constructive method to derive controls that weakly depend on the potentials. The second application concerns an inverse problem for the waves that consists in recovering an unknown time-independent potential from a single measurement of the flux. In that context, our approach does not yield any new stability result, but proposes a constructive  algorithm to rebuild the potential. 
In both cases, the main idea is to introduce weighted functionals that contain the Carleman weights and then to take advantage of the freedom on the Carleman parameters to limit the influences of the potentials.
}
\\

\noindent{\bf Keywords:}  wave equation, Carleman estimates, controllability, inverse problem, reconstruction.\\

\noindent{\bf AMS subject classifications:} 93B07, 93C20, 35R30.
\section{Introduction}

The goal of this article is to revisit observability properties in the light of Carleman estimates for the wave equation with a potential in a bounded domain. We will present applications of the appropriate Carleman estimates in two directions:
\begin{itemize}
	\item In control theory on the dependence of the exact controls for waves with respect to the potentials;
	\item In an inverse problem for the wave equation in which the potential is unknown, where we will give a reconstruction algorithm for the potential.
\end{itemize}

\subsection{Setting}
Let $\Omega$ be a smooth bounded domain of $\mathbb{R}^n$, $n\geq 1$, and $T>0$. We consider the wave equation 
\begin{equation}\label{eqz00}
	\left\{ \begin{array}{ll}
 		\partial_t^2 z -\Delta z+p z =g,  \qquad&  \tn{in } \Omega \times (0,2T),\\
		 z =0,  &\text{on } \partial \Omega \times (0,2T),\\
		z(0)= z_0,  \quad \partial_t z(0)= z_1,  & \tn{in } \Omega.
	\end{array}\right.
\end{equation}
Here, $z$ denotes the amplitude of the waves, $p$ is a potential supposed to be in $L^\infty(\Omega\times (0,2T))$, $g$ is a source term for instance in $L^2(\Omega \times (0,2T) )$ and $(z_0,z_1)$ are the initial data lying in $H^1_0(\Omega) \times L^2(\Omega)$. It is by now well-known that, due to hidden regularity results \cite{Lions}, under these assumptions, the normal derivative of $z$ on the boundary belongs to $L^2(\partial \Omega \times (0,2T))$. 

In this article, we focus on the following observability property: 
\begin{quote}
Given $\Gamma_0 \subset \partial \Omega$, can we determine $z$ solution of \eqref{eqz00} from the knowledge of $g$, $p$ and of $\partial_\nu z$ on $\Gamma_0 \times (0,2T)$? 
\end{quote}

When $p= 0$, this question has a positive answer if and only if the Geometric Control Condition holds \cite{Bardos,BurqGerard} for $\Omega, 2T$ and $\Gamma_0$. Roughly speaking, it asserts that all the rays of geometric optics in $\Omega$, which here are simply straight lines reflected on the boundary according to Descartes Snell's law, should meet the observation region $\Gamma_0$ at a non-diffractive point in a time less than $2T$.

However, other methods exist based on multiplier techniques  \cite{Lions,KomornikMultipliers} or on Carleman estimates \cite{FursikovImanuvilov,Zhang00}. These methods use stronger geometrical assumptions, and in particular the following ones, sometimes referred to as the Gamma-condition of Lions or the multiplier condition.\\

\noindent{\bf Geometric and time conditions:} 
\begin{equation}
	\label{GCC-multiplier}
	\exists \, x_0 \not \in \overline{\Omega} , \tn{ such that } \Gamma_0 \supset \{x \in \partial \Omega, \ (x-x_0) \cdot \nu(x) \geq 0 \}, 
\end{equation}
\begin{equation}
	\label{GCC-Time}
	T > \sup_{x \in \Omega} | x - x_0|.
\end{equation}

The advantage of Carleman estimates on the multiplier techniques is that they allow to easily handle potentials in $L^\infty(\Omega\times (0,2T))$ - see e.g. \cite{FursikovImanuvilov,Zhang00,DuyckaertsZhangZuazua}. In the applications we have in mind and that will be developed hereafter, it will be important to understand the dependence of the observability inequalities with respect to the potentials. This precisely explains why the path we have chosen hereafter uses Carleman estimates.

In order to state our results precisely, we shall need several notations. To make them easier, instead of working on $(0,2T)$ as in \eqref{eqz00}, we do a translation in time in order to consider:
\begin{equation}\label{eqz0}
	\left\{ \begin{array}{ll}
 		\partial_t^2 z -\Delta z+p z =g,  \qquad&  \tn{in } \Omega \times (-T,T),\\
		 z =0,  &\text{on } \partial \Omega \times (-T,T),\\
		z(-T)= z_0^{-T},  \quad		 \partial_t z(-T)= z_1^{-T},  & \tn{in } \Omega.
	\end{array}\right.
\end{equation}
Let us define, once for the whole paper, the weight functions we shall consider in Carleman estimates.\\

\noindent{\bf Weight functions:} Assume that $\Gamma_0$ satisfies \eqref{GCC-multiplier} for some $x_0 \not\in \overline\Omega$. Let $\beta \in (0,1)$, and define, for $(x,t) \in \Omega \times (-T,T)$,
	\begin{equation}
		\label{poids}
		\psi(x,t) = |x-x_0|^2-\beta t^2+C_0, \quad \text{ and  for $\lambda>0$,} \quad
		\varphi(x,t) = e^{\lambda \psi(x,t)},
	\end{equation}
	where $C_0>0$ is chosen such that $\psi\geq 1$ in $\Omega\times(-T,T)$. 
\medskip

Note that the weight function $\varphi$ defined that way depends on $\beta \in (0,1)$ and $\lambda >0$ and shall rather be denoted $\varphi_{\beta, \lambda}$, but these dependences are omitted for simplifying notations. 

We also define, for $m >0$, the spaces
$$
	\begin{array}{c}
		\ds
	L^\infty_{\leq m} (\Omega) = \{q \in L^\infty(\Omega), \|q\|_{L^\infty(\Omega)} \leq m \},
	\smallskip\\
		\ds 
	L^\infty_{\leq m} (\Omega\times (-T,T)) = \{p \in L^\infty(\Omega\times (-T,T)), \|p\|_{L^\infty(\Omega\times (-T, T))} \leq m \}.
	\end{array}
$$

The main results we shall use are the following ones:

\begin{theorem}
	\label{ThmCarleman}
	Assume the multiplier condition \eqref{GCC-multiplier} and the time condition  \eqref{GCC-Time}. Let $\beta \in (0,1)$ be such that
	\begin{equation}
		\label{GCC-Time-Beta}
		\sup_{x\in \Omega}|x-x_0| < \beta T.
	\end{equation}
	Then for any $m>0$, there exist $\lambda>0$ independent of $m$, $s_{0}= s_0(m)>0$ and a positive constant $M=M(m)$  
such that for $\varphi$ being defined as in \eqref{poids}, for all $p\in L^\infty_{\leq m} (\Omega\times (-T,T))$ and for all $s\geq s_{0}$:
\begin{equation}\label{CarlemTp}	
	\begin{aligned}	
		s \int_{-T}^{T} \int_{\Omega} &e^{2s\varphi}\left(|\partial_t z|^2 + |\nabla z |^2 \right)\,dxdt 
		+ s^3\int_{-T}^{T} \int_{\Omega} e^{2s\varphi}|z|^2\,dxdt\\
		+s  \int_{\Omega} &e^{2s\varphi(-T)}\left(|\partial_t z(-T)|^2 + |\nabla z (-T)|^2\right)dx
		+ s^3 \int_{\Omega} e^{2s\varphi(-T)}|z(-T)|^2\,dx \\
		&\leq M\int_{-T}^{T} \int_{\Omega} e^{2s\varphi}|\partial_t^2 z -\Delta z+p z |^2\,dxdt 
		+ Ms \int_{-T}^{T} \int_{\Gamma_0} e^{2s\varphi} \left|\partial_\nu z\right|^2\,d\sigma dt ,
		\end{aligned}
\end{equation}
for all $z\in L^2(-T,T;H_0^1(\Omega))$ satisfying $\partial_t^2 z -\Delta z+p z \in L^2(\Omega\times (-T,T))$ and $\partial_\nu z \in L^2(\Gamma_0 \times (-T,T))$.
\end{theorem}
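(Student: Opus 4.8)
The plan is to prove the estimate first in the case $p=0$, fixing $\lambda$ once and for all at that stage, and then to recover the general case by absorption. Indeed, for $p\in L^\infty_{\le m}(\Omega\times(-T,T))$ one has $\int e^{2s\varphi}|pz|^2\le m^2\int e^{2s\varphi}|z|^2$, so the contribution of the potential to the source term on the right-hand side is bounded by $Mm^2\int e^{2s\varphi}|z|^2$; for $s$ large enough (with $s_0^3$ comparable to $m^2$) this is dominated by the term $s^3\int e^{2s\varphi}|z|^2$ already present on the left for $p=0$. This is precisely the mechanism forcing $\lambda$ to be independent of $m$ while $s_0$ and $M$ depend on $m$. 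By a density argument together with the hidden regularity of the normal trace, it suffices to prove the inequality for smooth $z$.

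\textbf{Conjugation and splitting.} Setting $w=e^{s\varphi}z$, I would compute the conjugated operator $Pw:=e^{s\varphi}(\partial_t^2-\Delta)(e^{-s\varphi}w)$ and decompose it as $Pw=P_1w+P_2w+Rw$, where
\begin{equation*}
P_1 w=\partial_t^2 w-\Delta w+s^2\big((\partial_t\varphi)^2-|\nabla\varphi|^2\big)w
\end{equation*}
collects the formally self-adjoint terms, $P_2w=-2s\,\partial_t\varphi\,\partial_t w+2s\,\nabla\varphi\cdot\nabla w+(\text{first-order correction})$ the skew-adjoint ones, and $R$ is a lower-order remainder. Expanding the square and discarding $\|P_1w\|^2+\|P_2w\|^2\ge 0$ gives, for $p=0$,
\begin{equation*}
\int_{-T}^{T}\!\!\int_\Omega e^{2s\varphi}|\partial_t^2 z-\Delta z|^2\;\ge\;2\,(P_1w,P_2w)_{L^2}+(\text{terms absorbable for }s\text{ large}),
\end{equation*}
so the heart of the matter is the cross term $(P_1w,P_2w)$, which after repeated integration by parts produces a dominant volume integral together with boundary contributions on $\partial\Omega\times(-T,T)$ and on $\Omega\times\{-T,T\}$.

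\textbf{The volume term.} The volume integral is a quadratic form in $(\partial_t w,\nabla w,w)$ whose coefficients involve the first and second derivatives of $\varphi=e^{\lambda\psi}$. Since $\partial_t\varphi=\lambda\psi_t\varphi$ and $\nabla\varphi=\lambda\nabla\psi\,\varphi$, every coefficient carries a power of $\lambda$, and choosing $\lambda$ large lets the principal contributions dominate $R$ and all lower-order errors. The positivity of the second-order part (the coefficients of $|\partial_t w|^2$ and $|\nabla w|^2$) is where $\beta<1$ enters, through $\psi_{tt}=-2\beta$ while the spatial Hessian of $\psi$ equals $2\,\mathrm{Id}$. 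The zeroth-order part carries, at leading order in $\lambda$, a positive multiple of $\lambda^4\varphi^3(\beta^2t^2-|x-x_0|^2)^2$: this is a perfect square, hence nonnegative, but it degenerates on the set $\{\beta|t|=|x-x_0|\}$, which is nonempty inside $\Omega\times(-T,T)$ because $\beta|t|$ runs from $0$ at $t=0$ to $\beta T>\sup_x|x-x_0|$ as $|t|\to T$. Recovering a strictly positive lower bound of the form $s(|\partial_t z|^2+|\nabla z|^2)+s^3|z|^2$ across this set, by coupling the zeroth-order term with the first-order ones, is exactly where the time condition \eqref{GCC-Time-Beta} does its work, and I expect this to be the main obstacle.

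\textbf{Boundary terms.} On $\partial\Omega\times(-T,T)$ the Dirichlet condition $z=0$ forces $\nabla z=(\partial_\nu z)\,\nu$, so the spatial boundary integrand reduces to a multiple of $(x-x_0)\cdot\nu\,|\partial_\nu z|^2$: by \eqref{GCC-multiplier} one has $(x-x_0)\cdot\nu<0$ on $\partial\Omega\setminus\Gamma_0$, giving a favorable sign that is simply discarded, whereas on $\Gamma_0$ the term is bounded by the observation $Ms\int_{\Gamma_0}e^{2s\varphi}|\partial_\nu z|^2$ and moved to the right-hand side. On the time slices the endpoint contributions carry a coefficient proportional to $\partial_t\varphi=-2\beta\lambda t\,\varphi$, which is positive at $t=-T$ and, together with the factor $(\partial_t\varphi)^2-|\nabla\varphi|^2>0$ at $|t|=T$ (guaranteed again by \eqref{GCC-Time-Beta}), makes the $t=-T$ contribution a positive quadratic form; this is what produces the energy terms $s\int_\Omega e^{2s\varphi(-T)}(|\partial_t z(-T)|^2+|\nabla z(-T)|^2)+s^3\int_\Omega e^{2s\varphi(-T)}|z(-T)|^2$ on the left-hand side. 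Collecting the interior lower bound, the discarded favorable boundary terms, and the two terms moved to the right then yields \eqref{CarlemTp}, first for $p=0$ and finally, after the absorption described above, for every $p\in L^\infty_{\le m}(\Omega\times(-T,T))$.
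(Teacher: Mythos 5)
Your overall architecture (conjugate by $e^{s\varphi}$, split $Pw=P_1w+P_2w+Rw$, bound the cross term $(P_1w,P_2w)$ from below, then absorb the potential for $s\geq s_0(m)$ with $\lambda$ fixed independently of $m$) coincides with the paper's, and your absorption step and your treatment of the spatial boundary term on $\Gamma_0$ versus $\partial\Omega\setminus\Gamma_0$ are correct. But there is a genuine gap exactly at the point you yourself flag as ``the main obstacle'', and the mechanism you propose for it is the wrong one. The leading zeroth-order coefficient is indeed a multiple of $\lambda^4\varphi^3X^2$ with $X=|\partial_t\psi|^2-|\nabla\psi|^2=4(\beta^2t^2-|x-x_0|^2)$, which vanishes on $\{\beta|t|=|x-x_0|\}$; the cure, however, is neither the time condition \eqref{GCC-Time-Beta} nor any coupling with the first-order terms. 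The full zeroth-order coefficient is a quadratic polynomial in $X$ of the form $2\lambda X^2+bX+c$ with $b$ bounded and constant term $c=2(\partial_t^2\psi+2)|\nabla\psi|^2=16(1-\beta)|x-x_0|^2$, and the decisive fact is that $c\geq c_*>0$ uniformly on $\Omega$ because $x_0\notin\overline\Omega$ and $\beta<1$; taking $\lambda$ large then makes the minimum $c-b^2/(8\lambda)$ strictly positive, uniformly in $(x,t)$. This is why the interior estimate of the paper holds for \emph{arbitrary} $T>0$ (the time condition only enters later), and your proposal as written does not actually produce the lower bound $Ms^3\lambda^3\int\varphi^3|w|^2$. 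A related omission: making the coefficients of $|\partial_tw|^2$ and $|\nabla w|^2$ simultaneously positive requires tuning the splitting between $P_2$ and the remainder (the parameter $\alpha\in\left(2\beta/(\beta+n),\,2/(\beta+n)\right)$ in the paper), not merely $\beta<1$.

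Where you genuinely diverge from the paper is in how you remove the conditions at $t=\pm T$ and obtain the pointwise term at $t=-T$: you keep the time-boundary terms generated by the integrations by parts and assert that they form a positive quadratic form under \eqref{GCC-Time-Beta}. This is a legitimate alternative route: the diagonal boundary terms in $|\partial_tw|^2$ and $|\nabla w|^2$ at $t=\pm T$ carry the positive factor $2\beta T$, and the indefinite cross term $\partial_tw\,\nabla w\cdot\nabla\psi$, whose coefficient involves $|\nabla\psi|=2|x-x_0|$, is dominated precisely when $\beta T>\sup_\Omega|x-x_0|$ — that is where \eqref{GCC-Time-Beta} actually does its work in your scheme, together with the absorption of the $w\,\partial_tw$ cross terms for $s$ large. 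You assert this positivity but do not check it, and it is not automatic from the two sign facts you quote. The paper instead proves the estimate first for $v$ with $v(\pm T)=\partial_tv(\pm T)=0$ (so no time-boundary terms arise at all), and then handles general $z$ by a cut-off $\chi(t)$ supported away from $\pm T$ combined with a weighted-energy Gr\"onwall argument on $(T-\eta,T)$ and $(-T,-T+\eta)$; this energy argument is where \eqref{GCC-Time-Beta} enters and where the $t=-T$ term of \eqref{CarlemTp} comes from. Either route can be made to work, but in your write-up both decisive positivity statements — the zeroth-order volume coefficient and the $t=\pm T$ quadratic forms — are left unproven.
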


\begin{theorem}
	\label{ThmCarleman-t=0}
	Under the assumptions of Theorem \ref{ThmCarleman}, if $z$ furthermore satisfies $z(\cdot,0) = 0$ in $\Omega$, one also has
	\begin{multline}
		s^{1/2} \int_{\Omega} e^{2s\varphi(0)}|\partial_t z(0)|^2 \,dx 
		\leq M\int_{-T}^{T} \int_{\Omega} e^{2s\varphi}|\partial_t^2 z -\Delta z+p z |^2\,dxdt 
		\\
		+ Ms \int_{-T}^{T} \int_{\Gamma_0} e^{2s\varphi} \left|\partial_\nu z\right|^2\,d\sigma dt.
		\label{Carlem0p}
	\end{multline}
	In particular, if $z(\cdot,0) = 0$ in $\Omega$ and $q \in L^\infty_{\leq m}(\Omega)$, then for all $z\in L^2(0,T;H_0^1(\Omega))$ satisfying $\partial_t^2 z -\Delta z+q z \in L^2(\Omega\times (0,T))$ and $\partial_\nu z \in L^2(\Gamma_0 \times (0,T))$ and for all $s \geq s_0(m)$,
	\begin{multline}\label{CarlemT-0}
		s^{1/2}  \int_{\Omega} 
		 e^{2s\varphi(0)}|\partial_t z(0)|^2 \,dx		
		+ s \int_{0}^{T} \int_{\Omega} 
		e^{2s\varphi}\left(|\partial_t z|^2 + |\nabla z |^2 \right)\,dxdt 
		+ s^3\int_{0}^{T} \int_{\Omega} e^{2s\varphi}|z|^2\,dxdt \\
		\leq M\int_{0}^{T} \int_{\Omega} e^{2s\varphi}|\partial_t^2 z -\Delta z+q z|^2\,dxdt 
		 + Ms \int_{0}^{T} \int_{\Gamma_0} e^{2s\varphi} \left|\partial_\nu z\right|^2\,d\sigma dt .
	\end{multline}
\end{theorem}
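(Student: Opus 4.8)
The plan is to treat the two estimates in order: first the single-slice bound \eqref{Carlem0p}, which is the substantive part, and then to deduce \eqref{CarlemT-0} from it by a reflection argument exploiting the time-independence of $q$.

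For \eqref{Carlem0p} I would start from the observation that $z(\cdot,0)=0$ in $H_0^1(\Omega)$ forces $\nabla z(\cdot,0)=0$ as well, so the left-hand side only involves $\partial_t z(0)$; writing $E(t)=\int_\Omega e^{2s\varphi(t)}(|\partial_t z|^2+|\nabla z|^2)\,dx$, the target is $s^{1/2}E(0)$. The key structural fact is that the weight is maximal in time at $t=0$: since $\psi(x,t)=|x-x_0|^2-\beta t^2+C_0$, one has $\partial_t\varphi(x,0)=0$ and $\partial_t^2\varphi(x,0)=-2\lambda\beta\,\varphi(x,0)<0$, so that $\varphi(x,t)=\varphi(x,0)\,e^{-\lambda\beta t^2}$. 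Consequently $e^{2s\varphi}$ concentrates near $t=0$ in a time window of width $\sim s^{-1/2}$, and an elementary Laplace-type computation yields the pointwise-in-$x$ inequality $s^{1/2}e^{2s\varphi(x,0)}\le C\,s\int_{-T}^{T}e^{2s\varphi(x,t)}\,dt$ for $s\ge s_0$. This is precisely the mechanism producing the extra factor $s^{1/2}$, and it reduces matters to comparing $E(0)$ with the distributed quantity $s\int_{-T}^T E$, which Theorem \ref{ThmCarleman} already bounds by the right-hand side.

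To carry this out one must transfer $\partial_t z(x,0)$ to $\partial_t z(x,t)$ inside the window, controlling the variation $\partial_t z(x,0)-\partial_t z(x,t)=-\int_0^t(\Delta z-pz+g)\,d\tau$; the troublesome $\Delta z$ term is handled by integrating by parts in $x$ before estimating, so that only $\nabla z$ — already present on the left of Theorem \ref{ThmCarleman} — appears, the smallness $|t|\lesssim s^{-1/2}$ of the window compensating the surplus powers of $s$. Equivalently, one may re-run the energy (pointwise Carleman) identity behind Theorem \ref{ThmCarleman} while retaining the time-boundary contribution at $t=0$, where $\partial_t\varphi=0$ and $z(\cdot,0)=0$ annihilate everything except the term in $\partial_t z(0)$. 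The main obstacle lies exactly here: capturing the $t=0$ slice with the correct power $s^{1/2}$, i.e. controlling the variation of $\partial_t z$ across the concentration window (which naively involves $\Delta z\notin L^2$) and bookkeeping the powers of $s$ so that the distributed estimate absorbs every remainder.

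Finally, \eqref{CarlemT-0} follows by reflection. I would set $\tilde z(x,t)=z(x,t)$ for $t\in[0,T)$ and $\tilde z(x,t)=-z(x,-t)$ for $t\in(-T,0)$. Since $q$ is time-independent and $z(\cdot,0)=0$, this odd-in-time extension is continuous together with its time derivative across $t=0$, belongs to $L^2(-T,T;H_0^1(\Omega))$, and solves $\partial_t^2\tilde z-\Delta\tilde z+q\tilde z=\tilde g$ with $\tilde g\in L^2$ (namely $\tilde g(\cdot,t)=g(\cdot,t)$ for $t>0$ and $-g(\cdot,-t)$ for $t<0$) and $\partial_\nu\tilde z\in L^2(\Gamma_0\times(-T,T))$. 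Because $\varphi$ is even in $t$, every weighted integrand occurring ($|\partial_t\tilde z|^2$, $|\nabla\tilde z|^2$, $|\tilde z|^2$, $|\tilde g|^2$, $|\partial_\nu\tilde z|^2$) is even in $t$, so the integrals over $(-T,T)$ are exactly twice those over $(0,T)$, while the $t=0$ slice is unchanged. Applying Theorem \ref{ThmCarleman} and \eqref{Carlem0p} to $\tilde z$ with the admissible potential $q\in L^\infty_{\leq m}(\Omega)$, and absorbing the factor $2$ into $M$, gives \eqref{CarlemT-0}.
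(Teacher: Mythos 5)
Your deduction of \eqref{CarlemT-0} from \eqref{Carlem0p} by reflection is correct, and your \emph{odd} extension is in fact the right choice (it keeps $\partial_t\tilde z$ continuous across $t=0$, so $\partial_t^2\tilde z$ acquires no Dirac mass; the paper's text states the even extension, which would require $\partial_t z(\cdot,0)=0$). The genuine gap is in the main step, the proof of \eqref{Carlem0p}. Your Laplace-type pointwise inequality $s^{1/2}e^{2s\varphi(x,0)}\leq Cs\int_{-T}^{T}e^{2s\varphi(x,t)}\,dt$ is true, but the transfer of $\partial_t z(\cdot,0)$ across the window of width $s^{-1/2}$ fails: after writing $\partial_t z(x,0)=\partial_t z(x,t)-\int_0^t\partial_t^2 z\,d\tau$ you must control
$$
s\int_{|t|\lesssim s^{-1/2}}\int_\Omega e^{2s\varphi(x,t)}\Big|\int_0^t\Delta z(x,\tau)\,d\tau\Big|^2\,dx\,dt,
$$
and $\int_0^t\Delta z\,d\tau=\Delta\bigl(\int_0^t z\,d\tau\bigr)$ is only an $H^{-1}(\Omega)$ distribution under the standing hypotheses ($z\in L^2(H^1_0)$, $\square z+pz\in L^2$); its weighted $L^2(\Omega)$ norm is not defined, let alone bounded by $\nabla z$. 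Integration by parts in $x$ cannot rescue a \emph{square}: pairing $\Delta\zeta$ against $e^{2s\varphi}\Delta\zeta$ as an $H^{-1}$--$H^1_0$ duality would need the second factor in $H^1_0$, and integrating by parts inside the square produces third-order derivatives. So the argument breaks exactly at the point you flag as the main obstacle, and the smallness of the window does not help.

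The one-sentence alternative you mention (retain the $t=0$ boundary term in an energy identity) is the right idea, but it must be executed on the conjugated, cut-off variable and with the multiplier $\partial_t w$ --- and that computation, not weight concentration, is what produces the exponent $1/2$. The paper sets $w=e^{s\varphi}\chi z$ (the cut-off $\chi$ is needed so that Theorem \ref{CarlemanClas} applies) and computes $\int_{-T}^{0}\int_\Omega P_1w\,\partial_t w$, with $P_1$ as in \eqref{P1}: since $w(\cdot,0)=0$ annihilates the boundary terms in $|\nabla w(0)|^2$ and $|w(0)|^2$, this equals $\tfrac12\int_\Omega|\partial_t w(0)|^2$ up to $O\bigl(s^2\iint|w|^2\bigr)$, whence by Cauchy--Schwarz
$$
s^{1/2}\int_\Omega|\partial_t w(0)|^2\,dx\;\leq\;\iint|P_1w|^2+s\iint|\partial_t w|^2+Cs^{5/2}\iint|w|^2 ,
$$
and every term on the right is dominated by the left-hand side of \eqref{CarlemW} --- crucially including $\iint|P_1w|^2$, which Theorem \ref{CarlemanClas} keeps on the left for precisely this purpose. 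The power $s^{1/2}$ is the geometric mean of the weights $s^0$ on $\|P_1w\|^2$ and $s^1$ on $\|\partial_t w\|^2$. Note that literally ``re-running the identity behind Theorem \ref{ThmCarleman}'' on $(-T,0)$ (i.e.\ the $P_1wP_2w$ cross-product) would give nothing: the boundary terms it produces at $t=0$ all carry a factor $\partial_t\psi(\cdot,0)=0$. Without this multiplier computation (followed by the energy argument removing $\chi$), \eqref{Carlem0p} is not established.
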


Note that the condition $z(\cdot, 0) = 0$ in $\Omega$ of Theorem \ref{ThmCarleman-t=0} makes sense for $z$ such that $z\in L^2((0,T);H_0^1(\Omega))$ and $\partial_t^2 z -\Delta z+p z \in L^2(\Omega\times (0,T))$ since then $\partial_t^2 z = \Delta z - pz+(\partial_t^2 z -\Delta z+p z)$ belongs to $L^2((0,T);H^{-1}(\Omega))$.

Theorems \ref{ThmCarleman}--\ref{ThmCarleman-t=0} do not claim particular originality and many of their ingredients are already available in the literature, see e.g. \cite{Baudouin01} where very similar estimates are proved and \cite{FursikovImanuvilov,Zhang00,Im02} for more references. However, to our knowledge, this is the first time that these global Carleman estimates are written under that form, which is easier to use to achieve our goals. Detailed proofs of Theorem \ref{ThmCarleman}--\ref{ThmCarleman-t=0} are given in Section \ref{SecCarleman}.

\subsection{Applications to controllability}\label{SubsecIntroControl}
The idea is to take advantage of the Carleman estimate of Theorem \ref{ThmCarleman} to obtain controls whose dependence with respect to the potential is weak. To be more precise, we focus on the following exact controllability problem: 

\begin{quote}
Given $(y_0^{-T}, y_1^{-T}) \in L^2(\Omega) \times H^{-1} (\Omega)$, find $u \in L^2(\Gamma_0 \times (-T,T))$ such that the solution $y$ of
\begin{equation}
	\label{EqYControl}
	\left\{ \begin{array}{ll}
 		\partial_t^2 y-\Delta y+p y= 0, \quad  & \tn{in } \Omega \times (-T,T),	\\
		y=u \, \fcar_{\Gamma_0},  \qquad &\tn{on } \partial \Omega \times (-T,T),\\
		y(-T)= y_0^{-T},  \quad 
		\partial_t y(-T)= y_1^{-T},  & \tn{in } \Omega
	\end{array}\right.
\end{equation}
solves
\begin{equation}
	\label{ExactControllability}
	y(T) = \partial_t y(T) = 0, \qquad \tn{in } \Omega.
\end{equation}
\end{quote}

This exact controllability problem is equivalent to the observability of the system \eqref{eqz0}. These two properties are dual one from another, as stated by Lions \cite{Lions}  using the Hilbert Uniqueness Method (HUM): the HUM computes the control of minimal $L^2(\Gamma_0 \times (-T, T))$-norm from the minimization of a quadratic functional whose coercivity is equivalent to an observability property for the adjoint system \eqref{eqz0} that can be deduced from Theorem~\ref{ThmCarleman} under the conditions \eqref{GCC-multiplier}--\eqref{GCC-Time}. \\

Actually, we shall not focus on these HUM controls. Nevertheless, the controls that we shall consider below are also computed with a duality argument, based on the ``observability'' inequality \eqref{CarlemTp} directly. Our approach is strongly inspired by the duality strategy employed by Fursikov and Imanuvilov \cite{FursikovImanuvilov}, that has mainly been used for parabolic equations so far. The idea is to minimize, for $s >0$, the functional
\begin{multline}
	\label{K-functional}
	K_{s,p}(z) = \frac{1}{2s}\int^T_{-T}\int_\Omega e^{2s\varphi} | \partial_t^2 z-\Delta z+p z|^2\,dxdt +\frac{1}{2}\int^T_{-T}\int_{\Gamma_0}e^{2s\varphi}  |\partial_\nu z|^2 \,d\sigma dt 
			\\
			+ \langle (y_0^{-T},y_1^{-T}),(z(-T),\partial_t z(-T))\rangle_{(L^2\times H^{-1})\times(H^1_0\times L^2)}, 
\end{multline}
on the trajectories $z$ such that $z\in L^2(-T,T;H^1_0(\Omega))$, $ 	\partial_t^2 z -\Delta z+p z\in L^2(\Omega \times (-T, T))$ and $\partial_{\nu} z \in L^2(\Gamma_0 \times (-T, T))$. Here,  
$$
	\langle (y_0^{-T},y_1^{-T}),(z_0^{-T},z_1^{-T})\rangle_{(L^2\times H^{-1})\times(H^1_0\times L^2)} = \int_\Omega y_0^{-T} z_1^{-T} - \langle y_1^{-T} , z_0^{-T}\rangle_{H^{-1} \times H^1_0} dx,
$$
with
$$
	\langle y_1^{-T} , z_0^{-T}\rangle_{H^{-1} \times H^1_0} = \int_\Omega \nabla (-\Delta_{d})^{-1} y_1^{-T} \cdot  \nabla z_0^{-T}dx, 
$$
where $\Delta_d$ is the Laplace operator with Dirichlet boundary conditions. Note that this functional $K_{s,p}$ depends on $s$ - the parameter chosen in the exponential - and on the potential $p$. 

Then, according to the Carleman inequality of Theorem \ref{ThmCarleman}, under conditions \eqref{GCC-multiplier}--\eqref{GCC-Time}, if, for some $m>0$, $p \in L^\infty_{\leq m} (\Omega \times (-T,T))$ and $s \geq s_0(m)$, we shall easily show that $K_{s,p}$ is strictly convex and coercive. $K_{s,p}$ therefore has a unique minimizer, denoted by $Z[s,p]$ to underline the dependence with respect to the potential $p$ and the parameter $s$. Simple computations prove that if we set 
\begin{equation}
	\label{LinkControlPb-Adjoint}
	Y[s,p] = \frac{1}{s}e^{2 s \varphi} (\partial_t^2 - \Delta +p) Z[s,p] \quad \text{ and } \quad U[s,p] = e^{2 s \varphi} \partial_\nu Z[s,p] \fcar_{\Gamma_0},
\end{equation}
we obtain a solution to the exact controllability problem \eqref{EqYControl}--\eqref{ExactControllability} - see Theorem \ref{PropControle} for precise statements and proofs. We are then in position to study the dependence of the controls with respect to the potential:

\begin{theorem}
	\label{ThmErrorRelative}
	Assume the conditions \eqref{GCC-multiplier}--\eqref{GCC-Time}.
	
	Let $m>0$ and  $p^a$, $p^b$ be two potentials in $L^\infty_{\leq m}(\Omega\times (-T, T))$.
	Given an initial data $(y_0^{-T}, y_1^{-T}) \in L^2(\Omega) \times H^{-1}(\Omega)$, there exists a constant $M = M(m) >0$ independent of $s $ such that the corresponding controlled trajectories $(Y[s, p^a], U[s,p^a])$ and $(Y[s,p^b], U[s,p^b])$ satisfy, for all $s \geq s_0(m)$,
	\begin{multline}
	\label{ErrorRelative}
	\dfrac{
	s\ds \int_{-T}^{T} \int_\Omega e^{-2s\varphi}|Y[s,p^a] - Y[s,p^b]|^2 \,dxdt
	+  
	\int_{-T}^{T} \int_{\Gamma_0} e^{-2s\varphi} |U[s,p^a] - U[s,p^b] |^2\,dxdt 
	}
	{
	s\ds\int_{-T}^{T} \int_\Omega e^{-2s\varphi} (Y[s,p^a]^2 + Y[s,p^b ]^2) \,dxdt	
	+ 
	\int_{-T}^{T} \int_{\Gamma_0} e^{-2s\varphi} (U[s,p^a]^2 + U[s,p^b]^2 )\,dxdt 
	}
	\\
	\leq 
	\frac{M}{s^{3/2}} \|p^a-p^b\|_{L^\infty(\Omega\times (-T,T))} \leq 2m M s^{-3/2},
	\end{multline}
	where $\varphi = \varphi_{\beta, \lambda}$ is chosen so that Theorem \ref{ThmCarleman} holds and $s_0(m)$ is the parameter given by Theorem \ref{ThmCarleman}.
	
	In other words, the relative error between the controlled trajectories $(Y[s,p], U[s,p])$ decays as $s^{-3/2}$ for potentials lying in $L^\infty_{\leq m}(\Omega \times (-T, T))$.
\end{theorem}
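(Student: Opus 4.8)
The plan is to exploit the variational (Euler--Lagrange) characterization of the minimizers $Z^a := Z[s,p^a]$ and $Z^b := Z[s,p^b]$ and to reduce everything to the Carleman inequality \eqref{CarlemTp} applied separately to $Z^a$ and $Z^b$. Throughout, write $L_p z = \partial_t^2 z - \Delta z + p z$, let $\int$ and $\int_{\Gamma_0}$ abbreviate the integrals over $\Omega\times(-T,T)$ and $\Gamma_0\times(-T,T)$, and introduce the symmetric bilinear form
\begin{equation*}
	a_p(z,w) = \frac 1s \int e^{2s\varphi} L_p z\, L_p w + \int_{\Gamma_0} e^{2s\varphi}\partial_\nu z\, \partial_\nu w ,
\end{equation*}
so that $K_{s,p}(z) = \tfrac12 a_p(z,z) + b(z)$, where $b$ is the linear term carrying the data $(y_0^{-T},y_1^{-T})$, which does \emph{not} depend on $p$. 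The optimality conditions then read $a_{p^a}(Z^a,w) = -b(w) = a_{p^b}(Z^b,w)$ for all admissible $w$.

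First I would rewrite the two ratios of \eqref{ErrorRelative} in terms of $a_{p^a}$ and $a_{p^b}$. Using \eqref{LinkControlPb-Adjoint}, i.e.\ $Y^a = \tfrac1s e^{2s\varphi}L_{p^a}Z^a$ and $U^a = e^{2s\varphi}\partial_\nu Z^a\,\fcar_{\Gamma_0}$ (and similarly for $b$), a direct substitution shows that the denominator equals $a_{p^a}(Z^a,Z^a) + a_{p^b}(Z^b,Z^b)$, while the numerator equals $\tfrac1s\int e^{2s\varphi}|L_{p^a}Z^a - L_{p^b}Z^b|^2 + \int_{\Gamma_0} e^{2s\varphi}|\partial_\nu(Z^a-Z^b)|^2$. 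The core of the argument is then an exact identity for the numerator: subtracting the two optimality conditions gives $a_{p^a}(Z^a,w) - a_{p^b}(Z^b,w) = 0$ for all $w$, and I would test this with $w = Z^a - Z^b$. Writing $\delta p = p^b - p^a$, $A = L_{p^a}Z^a$, $B = L_{p^b}Z^b$, and using $L_{p^a} = L_{p^b} - \delta p$, the elementary relations $L_{p^a}(Z^a-Z^b) = (A-B) + \delta p\, Z^b$ and $L_{p^b}(Z^a-Z^b) = (A-B) + \delta p\, Z^a$ turn the volume integrand into $|A-B|^2 + \delta p\,(A Z^b - B Z^a)$, whence
\begin{equation*}
	\frac1s\int e^{2s\varphi}|A-B|^2 + \int_{\Gamma_0} e^{2s\varphi}|\partial_\nu(Z^a-Z^b)|^2 = -\frac1s\int e^{2s\varphi}\,\delta p\,(AZ^b - BZ^a).
\end{equation*}
Thus the numerator equals the right-hand side, which is linear in $\delta p$.

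It remains to bound that right-hand side. By Cauchy--Schwarz its two pieces are controlled by $\|\delta p\|_{L^\infty}$ times products such as $\bigl(\tfrac1s\int e^{2s\varphi}|A|^2\bigr)^{1/2}\bigl(\tfrac1s\int e^{2s\varphi}|Z^b|^2\bigr)^{1/2}$. The factors $\tfrac1s\int e^{2s\varphi}|A|^2$ and $\tfrac1s\int e^{2s\varphi}|B|^2$ are bounded by $a_{p^a}(Z^a,Z^a)$ and $a_{p^b}(Z^b,Z^b)$. The decisive gain comes from the zeroth-order term of \eqref{CarlemTp}: applying Theorem \ref{ThmCarleman} to $Z^b$ with potential $p^b$ gives $s^3\int e^{2s\varphi}|Z^b|^2 \le M s\, a_{p^b}(Z^b,Z^b)$, hence $\tfrac1s\int e^{2s\varphi}|Z^b|^2 \le (M/s^3)\, a_{p^b}(Z^b,Z^b)$, and likewise for $Z^a$. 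Inserting these bounds yields an estimate of the numerator by $(M/s^{3/2})\,\|\delta p\|_{L^\infty}\, a_{p^a}(Z^a,Z^a)^{1/2} a_{p^b}(Z^b,Z^b)^{1/2}$, and a last application of $2\sqrt{\alpha\beta}\le \alpha+\beta$ replaces the geometric mean by the denominator. Dividing gives the claimed $s^{-3/2}\|p^a-p^b\|_{L^\infty}$ bound, and $\|p^a-p^b\|_{L^\infty}\le 2m$ concludes.

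The main point — and the sole source of the power $s^{-3/2}$ — is the $s^3$ weight on $\int e^{2s\varphi}|z|^2$ in the Carleman estimate; everything else is the algebraic identity isolating the genuinely first-order dependence on $\delta p$ together with routine Cauchy--Schwarz bookkeeping. I expect the only real obstacle to be organizing this identity so that no uncontrolled term survives, and noting that the ratio is meaningful only when $(y_0^{-T},y_1^{-T})\neq 0$, in which case the denominator $a_{p^a}(Z^a,Z^a)+a_{p^b}(Z^b,Z^b) = -b(Z^a)-b(Z^b)$ is strictly positive.
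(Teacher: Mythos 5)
Your proposal is correct and follows essentially the same route as the paper: the same Euler--Lagrange identity (your single test with $w=Z^a-Z^b$ is just the paper's combination of the four identities obtained by testing each optimality condition with $Z^a$ and $Z^b$), the same algebraic isolation of the term linear in $p^a-p^b$, and the same use of the $s^3$ zeroth-order weight in \eqref{CarlemTp} to produce the $s^{-3/2}$ gain, with your Cauchy--Schwarz plus AM--GM step being equivalent to the paper's Young inequality with parameter $s^{3/2}$. No gaps.
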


Theorem \ref{ThmErrorRelative} states that, as $s$ increases, the control obtained by the minimization of $K_{s,p}$ depends less and less of the potential $p$. This is of course in complete agreement with the results obtained using microlocal analysis, in which the potentials play no role - see e.g. \cite{BurqGerard}. However, to our knowledge, the relations between controls computed for different potentials have not been studied so far. 

The proof of Theorem \ref{ThmErrorRelative} is purely variational and comes from the variational characterization of the controls. The main issue is to track the powers of $s$ during the proof. Again, this strongly relies on the Carleman estimate of Theorem \ref{ThmCarleman}. One will find all the details in Section~\ref{SecCont}.

\subsection{Applications to inverse problems}\label{SubsecIntroInversePb}
The idea now is to take advantage of the Carleman estimate of Theorem \ref{ThmCarleman-t=0} to conceive 
a reconstruction algorithm of the potential from the knowledge of the flux of the solution.
To be more precise, we focus on the following inverse problem:

\begin{quote}
Given the source terms $h$ and $h_{\partial}$ and the initial data $(w_0, w_1)$, 
considering the solution of
\begin{equation}\label{EqW}
	\left\{ \begin{array}{ll}
 		\partial_t^2 W-\Delta W+Q W=h,\qquad   & \tn{in }\Omega \times (0,T),\\
		W =h_{\partial},  & \tn{on } \partial \Omega \times (0,T),\\
		W(0)= w_0, \quad \partial_t W(0)= w_1,  &\tn{in } \Omega,
	\end{array}\right.
\end{equation}
can we determine the unknown potential $Q = Q(x)$, assumed to depend only on $x\in \Omega$,
from the additional knowledge of the flux 
\begin{equation}
	\label{Flux}
	\mu = \partial_{\nu} W,  \quad \hbox{ on } \Gamma_0 \times (0,T)
\end{equation}
of the solution?
\end{quote}

\noindent Under the regularity assumption 
\begin{equation}
	\label{RegAssumptions}
	 W \in H^1((0,T); L^\infty(\Omega)),
\end{equation} 
the positivity condition
\begin{equation}
	\label{Positivity}
	\exists \alpha >0 \tn{ such that }|w_0| > \alpha \tn{ in } \Omega
\end{equation}
and the multiplier conditions \eqref{GCC-multiplier}--\eqref{GCC-Time}, the results in \cite{Baudouin01} (and in \cite{Yam99} under more regularity hypothesis) state the stability of this inverse problem consisting in finding the potential $Q $ from the measurement of the flux \eqref{Flux}. To be more precise, it is proved that, given $Q^a,\ Q^b \in L^\infty_{\leq m}(\Omega)$ and denoting by $W[Q^a]$ and $W[Q^b]$ the corresponding solutions to \eqref{EqW}, there exists a positive constant $M = M(\Omega, T, x_0,m)$ such that
$$
	\label{StabilityResults}
	\frac{1}{M} \|Q^a - Q^b \|_{L^2(\Omega)} 
		\leq 
	\| \partial_t \partial_{\nu} W[Q^a] - \partial_t \partial_\nu W[Q^b] \|_{L^2(0,T; L^2(\Gamma_0))}
		\leq 
	{M} \|Q^a - Q^b \|_{L^2(\Omega)}.
$$

Nevertheless, this stability result 
is not given with a constructive argument that could allow to explain how to find $Q$ from the knowledge of $\partial_{\nu} W  \fcar_{\Gamma_0}$. We are thus interested in deriving an algorithm so that it can eventually be implemented numerically. 

The algorithm we shall propose is based on a data assimilation problem that we briefly present below.\\

Let $m >0$ and $q \in L^\infty_{\leq m} (\Omega)$. Let $\mu \in L^2(\Gamma_0 \times (0,T))$ and $g \in L^2(\Omega \times (0,T))$. We introduce the functional
\begin{equation}
	\label{DefFunctional-J-DataAss}
	J_{s,q}[\mu,g](z) = \frac{1}{2s} \int_0^T \int_{\Omega} e^{2s \varphi} |\partial_{t}^2 z - \Delta z + q z - g|^2\,dxdt
	+ \frac{1}{2} \int_0^T \int_{\Gamma_0} e^{2s \varphi} | \partial_\nu z - \mu |^2 \ d\sigma dt,
\end{equation} 
on the trajectories $z$ such that $z \in L^2(0,T;H_0^1(\Omega))$, $\partial_{t}^2 z - \Delta z + q z \in L^2(\Omega \times (0,T))$, $\partial_\nu z \in L^2(\Gamma_0 \times (0,T))$ and $z(\cdot, 0) = 0$ in $\Omega$. Remark that $z(\cdot, 0)$ makes sense in $H^{-1}(\Omega)$ since $\partial_t^2 z  = \Delta z - q z + (\partial_{t}^2 z - \Delta z + q z)\in L^2(0,T; H^{-1}(0,1))$ under the previous assumptions.\\

Similarly as for $K_{s,p}$, one will see that this functional $J_{s,q}[\mu,g]$ has a unique minimizer $Z$ for $s \geq s_0$ (Proposition \ref{ThmJ-DataAssimilation}). We emphasize that $Z$ depends on $s$ and $q$, but the context will make it obvious and we therefore drop these dependences to simplify the notations. More importantly, we can study how the minimizer $Z$ depends on $g \in L^2 (\Omega\times (0,T))$ and, similarly as in Theorem \ref{ThmErrorRelative}, we will prove the following result:
\begin{theorem}
	\label{ThmDependenceG}
	Assume the multiplier condition \eqref{GCC-multiplier} and the time condition \eqref{GCC-Time}.
	
	Assume that $\mu \in L^2(\Gamma_0 \times (0,T))$ and $g^a,\,  g^b \in L^2(\Omega \times (0,T))$. Let $m>0$ and $q \in L^\infty_{\leq m} (\Omega)$. 
	Let $Z^j$ be the unique minimizer (see Proposition \ref{ThmJ-DataAssimilation}) of the functionals $J_{s,q}[\mu, g^j]$ for $j \in \{a, b\}$. Then there exist positive constants $s_0(m)$ and $M = M(m)$ such that for $s \geq s_0(m)$ we have:
	\begin{equation}
		\label{EstMin-s}
		s^{1/2}\int_\Omega e^{2 s \varphi(0)} |\partial_t Z^a(0) -\partial_t Z^b(0)|^2  \,dx \leq M \int_0^T \int_\Omega e^{2 s\varphi} |g^a - g^b|^2 \,dxdt, 
	\end{equation}
	where $\varphi = \varphi_{\beta, \lambda}$ is chosen so that Theorem \ref{ThmCarleman-t=0} holds and $s_0(m)$ is the parameter in Theorem \ref{ThmCarleman-t=0}.
\end{theorem}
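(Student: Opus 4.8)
The plan is to exploit the variational characterization of the two minimizers $Z^a,Z^b$ and to feed the resulting energy identity directly into the Carleman estimate \eqref{CarlemT-0}. First I would write the Euler--Lagrange equation associated with the minimization of the functional \eqref{DefFunctional-J-DataAss}: since $Z^j$ is the minimizer provided by Proposition \ref{ThmJ-DataAssimilation}, the first variation of $J_{s,q}[\mu,g^j]$ vanishes, so that for every admissible test function $z$,
\begin{equation*}
	\frac1s \int_0^T\!\!\int_\Omega e^{2s\varphi}(\partial_t^2 Z^j - \Delta Z^j + q Z^j - g^j)(\partial_t^2 z - \Delta z + q z)\,dxdt + \int_0^T\!\!\int_{\Gamma_0} e^{2s\varphi}(\partial_\nu Z^j - \mu)\,\partial_\nu z\,d\sigma dt = 0.
\end{equation*}
The admissible trajectories form a vector space, so that $Z := Z^a - Z^b$ is itself admissible and satisfies $Z(\cdot,0)=0$. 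Subtracting the two identities makes the common data $\mu$ disappear and yields, with $g:=g^a-g^b$ and the shorthand $LZ := \partial_t^2 Z - \Delta Z + q Z$,
\begin{equation*}
	\frac1s \int_0^T\!\!\int_\Omega e^{2s\varphi}(LZ - g)\, Lz\,dxdt + \int_0^T\!\!\int_{\Gamma_0} e^{2s\varphi}\,\partial_\nu Z\,\partial_\nu z\,d\sigma dt = 0 \qquad \text{for all admissible } z.
\end{equation*}

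Next I would take $z=Z$ as a test function, which is legitimate by the previous remark. After multiplying by $s$ this produces the key energy identity
\begin{equation*}
	\int_0^T\!\!\int_\Omega e^{2s\varphi}|LZ|^2\,dxdt + s\int_0^T\!\!\int_{\Gamma_0} e^{2s\varphi}|\partial_\nu Z|^2\,d\sigma dt = \int_0^T\!\!\int_\Omega e^{2s\varphi}\,g\,LZ\,dxdt.
\end{equation*}
The crucial observation is that the left-hand side here is, up to the multiplicative constant $M$, exactly the right-hand side of the Carleman estimate \eqref{CarlemT-0} applied to $Z$ (which is admissible and vanishes at $t=0$). Keeping only the first term of the left-hand side of \eqref{CarlemT-0}, I therefore obtain
\begin{equation*}
	s^{1/2}\int_\Omega e^{2s\varphi(0)}|\partial_t Z(0)|^2\,dx \le M \int_0^T\!\!\int_\Omega e^{2s\varphi}\,g\,LZ\,dxdt.
\end{equation*}

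It then remains to bound the right-hand side by $\int_0^T\!\!\int_\Omega e^{2s\varphi}|g|^2\,dxdt$. Discarding the nonnegative boundary term in the energy identity and applying the Cauchy--Schwarz inequality with the weight $e^{2s\varphi}$ gives a self-improving estimate, namely $\int e^{2s\varphi}|LZ|^2 \le \int e^{2s\varphi}\,g\,LZ \le (\int e^{2s\varphi}|g|^2)^{1/2}(\int e^{2s\varphi}|LZ|^2)^{1/2}$, whence $\int e^{2s\varphi}|LZ|^2 \le \int e^{2s\varphi}|g|^2$. Inserting this back through one more Cauchy--Schwarz bounds $\int e^{2s\varphi}\,g\,LZ$ by $\int e^{2s\varphi}|g|^2$, and recalling $g=g^a-g^b$ delivers precisely \eqref{EstMin-s}. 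I do not expect a genuine obstacle here: the powers of $s$ close up automatically because the $1/s$ normalization in \eqref{DefFunctional-J-DataAss} is tuned to match the $s^{1/2}$ weight on the left-hand side of \eqref{CarlemT-0}. The only points deserving real care are the justification that $Z=Z^a-Z^b$ lies in the admissible space with $Z(\cdot,0)=0$ (so that both the test-function step and the Carleman estimate legitimately apply) and the rigorous derivation of the Euler--Lagrange equation, both of which rest on the well-posedness supplied by Proposition \ref{ThmJ-DataAssimilation}.
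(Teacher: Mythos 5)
Your proposal is correct and follows essentially the same route as the paper: write the Euler--Lagrange identities for $Z^a$ and $Z^b$, subtract and test with $Z^a-Z^b$, bound the resulting cross term by $\int e^{2s\varphi}|g^a-g^b|^2$ (the paper uses Young's inequality to absorb, you use a two-step Cauchy--Schwarz self-improvement, which is equivalent), and then feed the left-hand side into the Carleman estimate \eqref{CarlemT-0}. No gaps.
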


Based on this result, we propose an algorithm to compute the potential $Q$ from the measurement of the flux, based on some additional knowledge on the $L^\infty(\Omega)$-norm of the unknown potential $Q$:
\begin{equation}
	\label{AssumptionQ}
	\exists m>0 \quad \tn{such that} \quad Q \in L^\infty_{\leq m} (\Omega).
\end{equation}
The algorithm then is the following:
\begin{algorithm}\label{Algo}
\begin{itemize}
	\item Initialization: $q^0 = 0$.
	\item Iteration. Given $q^k$, we set $\mu^k = \partial_t \left(\partial_\nu w[q^k] - \partial_\nu W[Q]\right)$ on $\Gamma_0 \times (0,T)$, where $w[q^k]$ denotes the solution of  
\begin{equation}\label{Eqwk}
	\left\{ \begin{array}{ll}
 		\partial_t^2 w-\Delta w+q^k w=h,\qquad   & \tn{in }\Omega \times (0,T),\\
		w =h_{\partial},  & \tn{on } \partial \Omega \times (0,T),\\
		w(0)= w_0, \quad \partial_t w(0)= w_1,  &\tn{in } \Omega,
	\end{array}\right.
\end{equation}
	corresponding to \eqref{EqW}  with the potential $q^k$. We then introduce the functional $J_{s,q^k}[\mu^k,0]$ defined, for some $s>0$ that will be chosen independently of $k$, by
		\begin{multline}
			\label{FunctionalJ-k}
			J_{s,q^k}[\mu^k,0](z ) 
			=  \frac{1}{2s} \int_0^T \int_{\Omega} e^{2s \varphi} |\partial_{t}^2 z - \Delta z + q^k z |^2\,dxdt
			\\
			+ \frac{1}{2} \int_0^T \int_{\Gamma_0} e^{2s \varphi} | \partial_\nu z - \mu^k |^2 \ d\sigma dt,
		\end{multline}	
		on the trajectories $z \in L^2(0,T; H^1_0(\Omega))$ such that  $\partial_{t}^2 z - \Delta z + q^k z \in L^2(\Omega \times (0,T))$, $\partial_\nu z \in L^2(\Gamma_0 \times (0,T))$ and $z(\cdot, 0) = 0$ in $\Omega$.\\
		Let $Z^k$ be the unique minimizer (see Proposition \ref{ThmJ-DataAssimilation}) of the functional $J_{s,q^k}[\mu^k,0]$, and then set
		\begin{equation}
			\label{Q-k-tilde}
			\tilde q^{k+1} = q^k + \frac{\partial_t Z^k(\cdot, 0)}{w_0},
		\end{equation}
		where $w_0$ is the initial condition in \eqref{EqW}.
		
		Finally, set
		\begin{equation}
			\label{Q-k+1}
			q^{k+1} = T_m (\tilde q^{k+1}), \quad \tn{ where } T_m(q)= \left\{ \begin{array}{ll} q, &\tn{ if } |q| \leq m, \\ \tn{sign}(q) m, &\tn{ if } |q| \geq m. \end{array}\right. 
		\end{equation}
\end{itemize}
\end{algorithm}

One will see in Section~\ref{SecInversePb} how Theorem \ref{ThmDependenceG} allows to prove the convergence of the above algorithm for $s$ large enough: 

\begin{theorem}\label{Thm-Convergence}
Assuming the multiplier and time conditions \eqref{GCC-multiplier}--\eqref{GCC-Time} and \eqref{RegAssumptions},\eqref{Positivity} and \eqref{AssumptionQ}, there exists a constant $M>0$ such that for all $s\geq s_0(m)$ and $k \in \mathbb{N}$,
\begin{equation}
	\label{EstimeesConvergence}
	\int_\Omega e^{2 s \varphi(0)} ( q^{k+1} - Q)^2 \, dx \leq \frac{M}{\sqrt{s}}  \int_{\Omega} e^{2 s \varphi(0)} (q^k - Q)^2 \, dx.
\end{equation}
In particular, when $s$ is large enough, the above algorithm converges.
\end{theorem}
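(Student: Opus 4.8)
The plan is to reduce the one-step contraction \eqref{EstimeesConvergence} to a direct application of Theorem \ref{ThmDependenceG}, by recognizing $\tilde q^{k+1}-Q$ as, up to division by $w_0$, the difference at $t=0$ of the time derivatives of two minimizers of $J_{s,q^k}$ sharing the \emph{same} flux data $\mu^k$ but built from two different source terms $g$. The first minimizer is $Z^k$ itself (source $g^a=0$), and the second will be an explicit wave trajectory constructed from the true solution $W[Q]$.

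First I would introduce $v = w[q^k]-W[Q]$, the difference between the solution of \eqref{Eqwk} with the current iterate and the true solution of \eqref{EqW}. Subtracting the two equations gives $\partial_t^2 v -\Delta v + q^k v = (Q-q^k)\,W[Q]$ in $\Omega\times(0,T)$, with $v=0$ on $\partial\Omega$ and $v(\cdot,0)=\partial_t v(\cdot,0)=0$ because the two problems share boundary and initial data. Setting $V=\partial_t v$ and using that $q^k$ and $Q$ are time-independent, $V$ solves $\partial_t^2 V -\Delta V + q^k V = (Q-q^k)\,\partial_t W[Q]$, vanishes on $\partial\Omega$, satisfies $V(\cdot,0)=0$, and has flux $\partial_\nu V = \partial_t\partial_\nu v = \mu^k$ on $\Gamma_0\times(0,T)$. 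The regularity assumption \eqref{RegAssumptions} is exactly what guarantees that the source $(Q-q^k)\partial_t W[Q]$ lies in $L^2(\Omega\times(0,T))$ and that $V$ is an admissible competitor in the class on which $J_{s,q^k}$ is defined.

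The key observation is then that, with the source $g^b := (Q-q^k)\,\partial_t W[Q]$, the trajectory $V$ makes both terms of $J_{s,q^k}[\mu^k,g^b]$ vanish, so $J_{s,q^k}[\mu^k,g^b](V)=0$; by the uniqueness of the minimizer (Proposition \ref{ThmJ-DataAssimilation}), $V$ \emph{is} the minimizer $Z^b$ of $J_{s,q^k}[\mu^k,g^b]$. Evaluating the equation for $v$ at $t=0$ and using $v(\cdot,0)=0$ (whence $\Delta v(\cdot,0)=0$ and $q^k v(\cdot,0)=0$) yields $\partial_t Z^b(\cdot,0)=\partial_t V(\cdot,0)=\partial_t^2 v(\cdot,0)=(Q-q^k)\,w_0$. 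Since $Z^k=Z^a$ is the minimizer for $g^a=0$, the definition \eqref{Q-k-tilde} gives
\[
\tilde q^{k+1}-Q = (q^k-Q) + \frac{\partial_t Z^a(\cdot,0)}{w_0} = \frac{\partial_t Z^a(\cdot,0)-\partial_t Z^b(\cdot,0)}{w_0}.
\]
I would then apply Theorem \ref{ThmDependenceG} with $g^a-g^b=(q^k-Q)\partial_t W[Q]$, divide by $w_0^2$, and use the positivity bound $|w_0|>\alpha$ from \eqref{Positivity}. The weight on the right-hand side is handled by the elementary monotonicity $\varphi(x,t)\le\varphi(x,0)$ coming from $\psi(x,t)=|x-x_0|^2-\beta t^2+C_0$ and $s>0$, giving $e^{2s\varphi}\le e^{2s\varphi(0)}$; pulling out the time-independent factor $(q^k-Q)^2$ and absorbing $\int_0^T\|\partial_t W[Q](\cdot,t)\|_{L^\infty(\Omega)}^2\,dt<\infty$ (again from \eqref{RegAssumptions}) into the constant converts the right-hand side into $M s^{-1/2}\int_\Omega e^{2s\varphi(0)}(q^k-Q)^2\,dx$. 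Finally, since $|Q|\le m$ by \eqref{AssumptionQ}, the truncation $T_m$ fixes $Q$ and is $1$-Lipschitz, so $|q^{k+1}-Q|=|T_m(\tilde q^{k+1})-T_m(Q)|\le|\tilde q^{k+1}-Q|$, which only improves the estimate and yields \eqref{EstimeesConvergence}; choosing $s$ with $Ms^{-1/2}<1$ gives the geometric convergence.

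\textbf{Main obstacle.} The hard part is not any single estimate but the structural identification of $Z^b$: one must recognize that differentiating the difference of the two wave solutions in time produces precisely a trajectory annihilating $J_{s,q^k}[\mu^k,g^b]$, and that its time derivative at $t=0$ reproduces the multiplicative error $(Q-q^k)w_0$. This is what turns the algebraic update \eqref{Q-k-tilde} into a difference of minimizers to which Theorem \ref{ThmDependenceG} applies, and it is where \eqref{RegAssumptions} and \eqref{Positivity} are genuinely used; everything downstream (weight monotonicity in $t$, the $L^\infty$ bound on $\partial_t W[Q]$, and contractivity of the truncation) is routine bookkeeping of the powers of $s$.
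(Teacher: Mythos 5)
Your proposal is correct and follows essentially the same route as the paper: the paper likewise identifies $z^k=\partial_t\bigl(w[q^k]-W[Q]\bigr)$ as the minimizer of $J_{s,q^k}[\mu^k,g^k]$ with $g^k=(Q-q^k)\partial_t W[Q]$ and $\partial_t z^k(\cdot,0)=(Q-q^k)w_0$, applies Theorem \ref{ThmDependenceG} against $Z^k$, and then uses $|w_0|>\alpha$, the monotonicity $\varphi(\cdot,t)\le\varphi(\cdot,0)$, the bound on $\|\partial_t W[Q]\|_{L^2(0,T;L^\infty(\Omega))}$, and the $1$-Lipschitz truncation $T_m$ exactly as you do. Your explicit verification that $V=\partial_t v$ annihilates $J_{s,q^k}[\mu^k,g^b]$ and is therefore the unique minimizer is the same observation the paper records in Section \ref{SectionAlgorithm}.
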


We emphasize here that this approach is constructive. In particular, at each step of this algorithm, we solve a quadratic strictly convex minimization problem, which can be easily done.  Besides, the algorithm necessarily converges to $Q$ due to Theorem \ref{Thm-Convergence}. This is a great advantage compared to the classical methods for solving this inverse problem, that usually consists in minimizing
\begin{equation}
	\label{DirectApproach}
	J(q)= \int_0^T \int_{\Gamma_0} | \partial_\nu w[q] - \partial_\nu W[Q]|^2 d \sigma dt,
\end{equation}
$w[q]$ being the solution of \eqref{EqW} (or \eqref{Eqwk}) corresponding to the potential $q$, which is not convex and may have several local minima. Of course, due to that fact, it is very difficult to propose a convergence result based on the minimization of the functional $J$ in \eqref{DirectApproach}, since classical minimization algorithms are not guaranteed to converge toward the global minimum of $J$.\\

However, when doing numerics, as underlined from the seminal work \cite{GL} to the most recent developments \cite{ErvZuaCime}, the discretization process usually creates  spurious high-frequency waves that do not travel and strongly disturb control processes, even making the discrete controls blow up for some initial data to be controlled. Based on the recent work \cite{BaudouinErvedoza11} that proves Carleman estimates for discrete waves uniformly with respect to the space discretization parameter, we will investigate the numerical methods to compute approximations of potentials in a future work. \\

Let us finally conclude this section by giving some references considering inverse problems for hyperbolic equations using Carleman estimates.

The use of Carleman estimates to prove uniqueness results in inverse problems was introduced in \cite{BuKli81} by Bukhge{\u\i}m and Klibanov.  
The first proofs of the stability of inverse problems for hyperbolic equations rely on uniqueness results obtained by local Carleman estimates (see e.g. \cite{Im02,Isakov}). One can read for instance \cite{PuelYam96,PuelYam97,Yam99,YamZhang03,StefanovUhlmann2011}, where the method uses compactness-uniqueness arguments based on observability inequalities. 

Concerning other inverse problems for the wave equation with a single observation, the references \cite{ImYamIP01,ImYamCom01,ImYamIP03} consider the case of interior or Dirichlet boundary data observation and use global Carleman estimates, as in \cite{Baudouin01}. 

For more generic hyperbolic models, one could also mention \cite{BaudouinCrepeauValein11,BMO07,ImYamCOCV05} giving stability of inverse problem from appropriate global Carleman estimates respectively for a network of 1-d strings, a discontinuous wave equation or the Lam\'e system. Let us also mention the work \cite{BellassouedIP04} for logarithmic stability results when no geometric condition is fulfilled.

\subsection{Outline and notations}
The paper is organized as follows. Section~\ref{SecCarleman} is devoted to the proof of several weighted estimates yielding the Carleman estimates of Theorems~\ref{ThmCarleman} and \ref{ThmCarleman-t=0}. In Section~\ref{SecCont}, we show how the result of Theorem~\ref{ThmCarleman} can be used to solve the exact controllability problem related to equation~(\ref{EqYControl}). In particular, we give the proof of Theorem~\ref{ThmErrorRelative} on the dependence of the control with respect to the potential. Section~\ref{SecInversePb} then focuses on the application of Theorem~\ref{ThmCarleman-t=0} to the inverse problem related to equation~(\ref{EqW}) of recovering the unknown potential. We describe in detail the algorithm proposed for solving this inverse problem and prove its convergence as stated by Theorem~\ref{Thm-Convergence}.\\

\noindent Due to the important number of notations, let us make precise some of them:
\begin{itemize}
	\item $y$ represents controlled trajectories and $u$ stands for the controls;
	\item $z$ represents free trajectories of the waves with a source term and satisfying homogeneous Dirichlet boundary conditions;
	\item $v$ represents trajectories of the waves satisfying $v(\pm T ) = \partial_t v(\pm T) = 0$;
	\item $w$ represents trajectories of the waves conjugated by the Carleman weight in Section \ref{SecCarleman};
	 	In the rest of the article, it represents solutions of \eqref{Eqwk};
	\item $\varphi, \, \psi$ are the Carleman weights defined by \eqref{poids};
	\item $s, \lambda, \beta$ are the parameters entering into the Carleman estimates; 
	\item $p$, $q$, $Q$ denote potentials, but potentials denoted by $p$ may depend on $x $ and $t$ whereas potentials $q$ depend only on $x$.
\end{itemize}

\section{Weighted estimates}\label{SecCarleman}
In this section, we will prove several weighted estimates, including a weighted Poincar\'e inequality, in the goal of proving Theorems \ref{ThmCarleman} and \ref{ThmCarleman-t=0}.

Let $\Omega$ be a smooth bounded domain of $\mathbb R^n$, $n\geq 1$. In order to simplify the notations, we introduce the d'Alembertian operator:
$$
	\square  = \partial_t^2 - \Delta .
$$
In the following, we consider a function $v\in L^2(-T,T; H^1_0(\Omega))$ such that $\square v \in L^2(\Omega \times (-T,T))$ and $v=0$ on $\partial \Omega \times (-T,T)$.
We will first prove Carleman estimates similar to the ones of Theorems \ref{ThmCarleman}--\ref{ThmCarleman-t=0} for the operator $\square$, corresponding to a potential equal to zero.

\subsection{A Carleman estimate in arbitrary time $T$}
Let us now give a global (meaning ``up to the boundary'') Carleman inequality, following Imanuvilov's method \cite{Im02}. 

\begin{theorem}[see \cite{Baudouin01}]\label{CarlemanClas}
Assume the Gamma-condition \eqref{GCC-multiplier}. Let $\psi$ and $\varphi$ be the weight functions defined by  \eqref{poids}.

Then for every $\beta\in(0,1)$, there exist  $\lambda_0>0$, $s_{0}>0$ 
and a positive constant $M$
such that for all $s\geq s_{0}$ and $\lambda \geq \lambda_0$,
\begin{gather}
	s \lambda \int_{-T}^{T} \int_{\Omega}\varphi  e^{2s\varphi}(|\partial_t v|^2 + |\nabla v |^2)\,dxdt 
	+ s^3 \lambda^3 \int_{-T}^{T} \int_{\Omega} \varphi^3 e^{2s\varphi}|v|^2\,dxdt\nonumber\\
	+ \int_{-T}^{T} \int_{\Omega} |P_1 (e^{s\varphi} v)|^2 \,dxdt 
	\label{CarlemClas}
	\leq M\int_{-T}^{T} \int_{\Omega} e^{2s\varphi}| \square v|^2\,dxdt 
	+ Ms \int_{-T}^{T} \int_{\Gamma_{0}} \varphi e^{2s\varphi} \left|\partial_\nu v\right|^2\,d\sigma dt 
\end{gather}
for every $v\in L^2(-T,T;H_0^1(\Omega))$ satisfying $ \square v \in L^2(\Omega \times (-T,T))$, $\partial_\nu v \in L^2(\Gamma_0 \times (-T,T))$ and $v(\pm T) = \partial_tv(\pm T)=0$ in $\Omega$, and where $P_1$ is defined by 
\begin{equation}
	P_{1}w = \partial_t^2 w-\Delta w+s^2\lambda^2\varphi^2w(|\partial_t \psi|^2-|\nabla\psi|^2).\label{P1}
\end{equation}
\end{theorem}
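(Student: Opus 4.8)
I would follow the classical conjugation–splitting scheme of Imanuvilov. Write $Q := \Omega\times(-T,T)$ and introduce the conjugated variable $w = e^{s\varphi} v$, which inherits the boundary conditions $w = 0$ on $\partial\Omega \times (-T,T)$ and $w(\pm T) = \partial_t w(\pm T) = 0$. Using $\nabla\varphi = \lambda\varphi\nabla\psi$ and $\partial_t\varphi = \lambda\varphi\,\partial_t\psi$, a direct computation of $e^{s\varphi}\square(e^{-s\varphi} w)$ gives the decomposition
\[
e^{s\varphi}\square v \;=\; P_1 w + P_2 w + R w,
\]
where $P_1$ is exactly the self-adjoint second-order operator \eqref{P1}, the first-order part is $2 s\lambda\varphi\bigl(\nabla\psi\cdot\nabla w - \partial_t\psi\,\partial_t w\bigr)$, and I would add to it the half-divergence correction that makes the resulting operator $P_2$ \emph{formally skew-adjoint} on $L^2(Q)$, putting the leftover zeroth-order terms (those coming from $s(\Delta\varphi-\partial_t^2\varphi)$ minus that correction, of size $O(s\lambda^2\varphi)$ acting on $w$) into the remainder $R$.

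Next I would take the squared $L^2(Q)$-norm of the identity $P_1 w + P_2 w = e^{s\varphi}\square v - R w$, yielding
\[
\int_{Q} |P_1 w|^2 + \int_{Q} |P_2 w|^2 + 2\int_{Q} P_1 w\, P_2 w \;=\; \int_{Q} |e^{s\varphi}\square v - R w|^2 .
\]
The core of the argument is the evaluation of the cross term $2\int_{Q} P_1 w\, P_2 w$ by integration by parts. Expanding $P_1$ and $P_2$ into their summands and integrating each product by parts in $x$ and in $t$ produces three kinds of contributions: interior integrals over $Q$; boundary integrals over $\partial\Omega\times(-T,T)$ carrying the factor $\nabla\psi\cdot\nu = 2(x-x_0)\cdot\nu$; and time-slice integrals at $t=\pm T$, which vanish identically thanks to $w(\pm T)=\partial_t w(\pm T)=0$ (this is precisely why no time condition is needed here).

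For the interior part the specific choice of $\psi$ is decisive: since $\nabla^2\psi = 2\,\mathrm{Id}$ and $\partial_t^2\psi = -2\beta$ with $\beta\in(0,1)$, the resulting quadratic form in $(\partial_t w, \nabla w, w)$ has leading coefficients of order $s\lambda\varphi$ on the gradient terms and $s^3\lambda^3\varphi^3$ on $|w|^2$; choosing $\lambda \ge \lambda_0$ large makes these highest powers of $\lambda$ dominate all cross terms, producing the lower bound $c\, s\lambda\int_{Q}\varphi\,(|\partial_t w|^2 + |\nabla w|^2) + c\, s^3\lambda^3\int_{Q}\varphi^3|w|^2$. For the spatial boundary I would use $w=0$ on $\partial\Omega$, so that tangential derivatives vanish, $\nabla w = (\partial_\nu w)\nu$, and the boundary integrand reduces to a multiple of $s\lambda\,\varphi\,(x-x_0)\cdot\nu\,|\partial_\nu w|^2$. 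On $\partial\Omega\setminus\Gamma_0$ the Gamma-condition \eqref{GCC-multiplier} forces $(x-x_0)\cdot\nu < 0$, so this term has the favorable sign and is discarded; on $\Gamma_0$ it is bounded by $Ms\int_{-T}^{T}\!\int_{\Gamma_0}\varphi e^{2s\varphi}|\partial_\nu v|^2$, since $\partial_\nu w = e^{s\varphi}\partial_\nu v$ there.

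Finally I would assemble everything: keep $\int_{Q}|P_1 w|^2$ on the left and drop $\int_{Q}|P_2 w|^2\ge 0$, bound the right-hand side by $2\int_{Q} e^{2s\varphi}|\square v|^2 + 2\int_{Q}|Rw|^2$, and absorb $\int_{Q}|Rw|^2$ into the dominant zeroth-order term, which is legitimate because $R$ costs two fewer powers of $s\varphi$ than $s^3\lambda^3\varphi^3$ and $\varphi\ge e^{\lambda}\ge 1$ provides the needed gain for $s\ge s_0$, $\lambda\ge\lambda_0$. It then remains to return from $w$ to $v$: since $\partial_t w = e^{s\varphi}(\partial_t v + s\lambda\varphi\,\partial_t\psi\,v)$ and $\nabla w = e^{s\varphi}(\nabla v + s\lambda\varphi\,\nabla\psi\,v)$, the weighted gradient terms in $w$ control $s\lambda\int_{Q}\varphi e^{2s\varphi}(|\partial_t v|^2+|\nabla v|^2)$ up to contributions of order $s^3\lambda^3\int_{Q}\varphi^3 e^{2s\varphi}|v|^2$ already present on the left, which gives \eqref{CarlemClas}. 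I expect the main difficulty to be the bookkeeping of the cross term $2\int_{Q} P_1 w\,P_2 w$ together with the verification that the interior quadratic form is positive-definite for large $\lambda$ — exactly where the pseudoconvexity encoded by $\psi$, i.e. the competition between $\nabla^2\psi=2\,\mathrm{Id}$ and $\partial_t^2\psi=-2\beta$, is exploited — and the sign analysis of the spatial boundary terms through the multiplier/Gamma-condition.
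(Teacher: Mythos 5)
Your overall architecture (conjugation $w=e^{s\varphi}v$, splitting into $P_1+P_2+R$, expansion of the cross term, sign analysis of the boundary term via the Gamma-condition, absorption of $R$, return to $v$) is the same as the paper's, but there is a genuine gap in the one step that makes hyperbolic Carleman estimates delicate: your choice of $P_2$ as the \emph{exactly skew-adjoint} completion of the transport term forces $R=0$ and corresponds, in the paper's notation, to taking $\alpha=0$ in \eqref{P2}. With that choice the interior quadratic form produced by $\int P_1w\,P_2w$ is \emph{not} positive definite. Indeed, the coefficient of $|\partial_t w|^2$ at order $s\lambda\varphi$ comes out as $2\partial_t^2\psi-\alpha(\partial_t^2\psi-\Delta\psi)=-4\beta+2\alpha(\beta+n)$, which equals $-4\beta<0$ when $\alpha=0$; and the $s\lambda^2\varphi$ contribution of the first-order terms is only the degenerate square $2s\lambda^2\varphi(\partial_t w\,\partial_t\psi-\nabla w\cdot\nabla\psi)^2$ (see \eqref{00}), which vanishes for instance at $t=0$ in the direction $\nabla w=0$, $\partial_t w\neq 0$, since $\partial_t\psi(\cdot,0)=0$. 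So taking $\lambda$ large does not rescue the sign of the $|\partial_t w|^2$ term, contrary to what you assert; that mechanism works only for the zeroth-order block, where the polynomial $F_\lambda(X)=2\lambda X^2+(\cdots)X+16(1-\beta)|x-x_0|^2$ has a strictly positive constant term thanks to $x_0\notin\overline\Omega$ and $\beta<1$.

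The fix, which is the heart of the paper's proof, is to transfer a portion $-\alpha s\lambda\varphi w(\partial_t^2\psi-\Delta\psi)$ of the zeroth-order term from $P_2$ into a nonzero remainder $R$, with $\alpha$ constrained to lie in the interval \eqref{ConditionOnAlpha}; the two endpoints are exactly what is needed to make both coefficients $2\partial_t^2\psi-\alpha(\partial_t^2\psi-\Delta\psi)$ and $4+\alpha(\partial_t^2\psi-\Delta\psi)$ strictly positive, and the interval is nonempty precisely because $\beta<1$ (this is where the hypothesis $\beta\in(0,1)$ is actually used). The price is that $|Rw|^2\sim s^2\lambda^2\varphi^2|w|^2$ is no longer zero and must be absorbed by the $s^3\lambda^3\varphi^3|w|^2$ term for $s\geq s_0$, which is the absorption step you describe but attribute to the wrong object. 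Without introducing this parameter (or an equivalent redistribution of the lower-order terms), the positivity of the interior quadratic form fails and the argument does not close.
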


Let us emphasize that such estimate is not new. Firstly, the proof can be read in the unpublished work \cite{Baudouin01} by the first author. Secondly, there are many Carleman estimates for hyperbolic equations.  One can find (local) Carleman estimates for regular functions with compact support in \cite{Carleman,HormanderLPDE,FursikovImanuvilov,Tataru96,Yam99}.
One can also read similar versions of global Carleman estimates for hyperbolic equations in \cite{Im02,ImYamIP03,Zhang00}.

For completeness, we give the proof of Theorem \ref{CarlemanClas} below.

\begin{remark}
This Carleman estimate is proved for any arbitrary time $T>0$, but $v$ has to satisfy $v(\pm T) = \partial_tv(\pm T)=0$. Therefore, the uniqueness result implied by Theorem~\ref{CarlemanClas} is not surprising since the corresponding unique continuation result is: If $v (\pm T) = \partial_t v (\pm T) = 0$, $v \in L^2((-T,T);H^1_0(\Omega))$, $\square v = 0$ and $\partial_\nu v_{|\Gamma_0} = 0$, then $v \equiv 0$.
\end{remark}

\begin{proof}
Using the weight $\varphi$ defined by \eqref{poids}, we set, for $s>0$,
$$
	w(x,t)=v(x,t)e^{s\varphi(x,t)} \quad \tn{for all } (x,t)\in \Omega \times (-T,T).
$$
Then, we introduce the conjugate operator $P$ defined by
\begin{equation}
\label{defP}
	Pw = e^{s\varphi}\square (e^{-s\varphi} w).
\end{equation}
Some easy computations give
\begin{eqnarray*}
Pw &=& \partial_t^2 w-2s\lambda\varphi (\partial_t w\partial_t \psi -\nabla w \cdot \nabla\psi)+s^2\lambda^2\varphi^2w(|\partial_t \psi|^2-|\nabla\psi|^2)
-\Delta w\\
&&\qquad  - s \lambda\varphi w(\partial_t^2 \psi - \Delta \psi )-s\lambda^2 \varphi w(|\partial_t \psi|^2-|\nabla\psi|^2)\\
&=& P_1w+P_2w+Rw,
\end{eqnarray*}
with $P_1$  defined by \eqref{P1} and
\begin{eqnarray}
		P_{2}w
		& = & (\alpha - 1)s \lambda\varphi w(\partial_t^2 \psi - \Delta \psi )
		-s\lambda^2 \varphi w(|\partial_t \psi|^2-|\nabla\psi|^2) \nonumber \\
		&&\qquad -2s\lambda\varphi (\partial_t w\partial_t \psi -\nabla w\cdot\nabla\psi)\label{P2} 
		\\
		Rw & = &-\alpha s \lambda\varphi w(\partial_t^2 \psi - \Delta \psi ),
\end{eqnarray}
where 
\begin{equation}
	\label{ConditionOnAlpha}
	\alpha \in \left( \dfrac{2\beta}{\beta + n},  \dfrac{2}{\beta + n} \right),
\end{equation}
 condition that will be explained later below in Step 2 of the proof.
Since we have
\begin{equation}
	\int_{-T}^{T} \int_{\Omega}\left(| P_{1}w| ^2 + | P_{2}w| ^2\right)\,dxdt
	+~2\int_{-T}^{T} \int_{\Omega}P_{1}wP_{2}w\,dxdt 
	= \int_{-T}^{T} \int_{\Omega}| Pw-Rw|^2\,dxdt\label{PP},
\end{equation}
the main part of the proof is then to bound from below the cross-term 
$$
	\int_{-T}^{T} \int_{\Omega}P_{1}w\, P_{2}w\,dxdt
$$ 
by positive and dominant terms, similar to the one of the left hand side of \eqref{CarlemClas}, and a negative boundary term, that will be moved to the right hand side of the estimate. For the sake of clarity, we will divide the proof in several steps.

All the computations below are done for smooth functions $v$ (equivalently, $w$). Then, by a classical density argument, we can extend the results to any $v$ satisfying $ \square v \in L^2(\Omega \times (-T,T))$, $\partial_\nu v \in L^2(\partial\Omega \times (-T,T))$ and $v(\pm T) = \partial_tv(\pm T)=0$ in $\Omega$.\\

\noindent \textbf{Step 1. Explicit calculations} 

We set 
$$
	\left\langle P_1w,P_2w\right\rangle_{L^2(\Omega\times(-T,T))}=\sum_{i,k=0}^3 I_{i,k}
$$ 
where $I_{i,k}$ is the integral of the product of the $i$th-term in $P_1w$ and the $k$th-term in $P_2w$. 
We mainly use integrations by parts and the properties of $w$ such as $w(\pm T) = 0$, $\partial_tw(\pm T) = 0$ in $\Omega$ and $w=0$ on $\partial\Omega\times (-T,T)$.\\

We shall also persistently use the fact that $\partial_t \psi$ does not depend on $x$ and that $\nabla \psi$ does not depend on time, and thus
$$
	\partial_t \nabla \psi  = \partial_{t}^2 \nabla \psi = {\bf 0}, \qquad \partial_t \Delta \psi = \partial_t (|\nabla \psi|^2)  = 0.
\medskip
$$

 Integrations by part in time give easily
\begin{eqnarray*}
	I_{11}&=&\int_{-T}^{T} \int_{\Omega}\partial_t^2 w((\alpha - 1)s \lambda\varphi w(\partial_t^2 \psi - \Delta \psi ))\,dxdt\\
	&=&(1-\alpha )s\lambda \int_{-T}^{T} \int_{\Omega} \varphi  | \partial_t w | ^2 (\partial_t^2 \psi - \Delta \psi ) \,dxdt\\
	&&- \dfrac {(1-\alpha )}2 s\lambda^2 \int_{-T}^{T} \int_{\Omega} \varphi | w | ^2 \partial_t^2 \psi 	(\partial_t^2 \psi - \Delta \psi ) \,dxdt\\
	&&- \dfrac {(1-\alpha )}2 s\lambda^3 \int_{-T}^{T} \int_{\Omega} \varphi | w | ^2  |\partial_t \psi|^2(\partial_t^2 \psi - \Delta \psi ) \,dxdt.
\end{eqnarray*}
Similarly, one has
\begin{eqnarray*}
	I_{12}&=&\int_{-T}^{T} \int_{\Omega}\partial_t^2 w(-s\lambda^2 \varphi w(|\partial_t \psi|^2-|\nabla\psi|^2))\,dxdt\\
	&=&s \lambda ^2 \int_{-T}^{T} \int_{\Omega} \varphi |\partial_t w|^2(|\partial_t \psi|^2-|\nabla\psi|^2)\,dxdt
	-s\lambda^2 \int_{-T}^{T} \int_{\Omega} \varphi  |w|^2 |\partial_t^2 \psi|^2\,dxdt\\
	&&-(2+\dfrac 12)s\lambda^3 \int_{-T}^{T} \int_{\Omega} \varphi |w|^2 |\partial_t \psi|^2\partial_t^2 \psi\,dxdt
	+\dfrac {s\lambda^3}2 \int_{-T}^{T} \int_{\Omega} \varphi |w|^2 |\nabla\psi|^2\partial_t^2 \psi\,dxdt\\
	&&-\dfrac {s\lambda^4}2 \int_{-T}^{T} \int_{\Omega} \varphi  |w|^2 |\partial_t \psi|^2 (|\partial_t \psi|^2-|\nabla\psi|^2)\,dxdt
\end{eqnarray*}
and
\begin{eqnarray*}
	I_{13}&=&\int_{-T}^{T} \int_{\Omega}\partial_t^2 w(-2s\lambda\varphi (\partial_t w\partial_t \psi -\nabla w\cdot\nabla\psi))\,dxdt\\
	&=& s\lambda \int_{-T}^{T} \int_{\Omega} \varphi | \partial_t w | ^2 \partial_t^2 \psi \,dxdt
	+s\lambda^2 \int_{-T}^{T} \int_{\Omega} \varphi | \partial_t w | ^2 |\partial_t \psi|^2 \,dxdt\\
	&&+ s\lambda \int_{-T}^{T} \int_{\Omega} \varphi | \partial_t w | ^2 \Delta \psi  \,dxdt
	+ s\lambda^2 \int_{-T}^{T} \int_{\Omega} \varphi | \partial_t w | ^2  |\nabla\psi|^2 \,dxdt\\
	&&-2s\lambda^2 \int_{-T}^{T} \int_{\Omega} \varphi \partial_t w\, \partial_t \psi \nabla w\cdot\nabla\psi \,dxdt.
\end{eqnarray*}

We compute in the same way
\begin{eqnarray*}
	I_{21}&=&\int_{-T}^{T} \int_{\Omega}-\Delta w ((\alpha - 1)s \lambda\varphi w(\partial_t^2 \psi - \Delta \psi ))\,dxdt\\
	&=& -(1-\alpha ) s \lambda \int_{-T}^{T} \int_{\Omega} \varphi |\nabla w|^2 (\partial_t^2 \psi - \Delta \psi ) \,dxdt\\
	&&+ \dfrac {(1-\alpha )}2 s \lambda^2 \int_{-T}^{T} \int_{\Omega} \varphi |w|^2 \Delta \psi (\partial_t^2 \psi - \Delta \psi )\,dxdt\\
	&&+ \dfrac {(1-\alpha )}2 s \lambda^3 \int_{-T}^{T} \int_{\Omega} \varphi |w|^2  |\nabla\psi|^2 (\partial_t^2 \psi - \Delta \psi )\,dxdt
\end{eqnarray*}
and 
\begin{eqnarray*}
	I_{22}&=&\int_{-T}^{T} \int_{\Omega}-\Delta w(-s\lambda^2\varphi w(|\partial_t \psi|^2-|\nabla\psi|^2))\,dxdt\\
	&=&-s \lambda^2 \int_{-T}^{T} \int_{\Omega} \varphi |\nabla w|^2(|\partial_t \psi|^2-|\nabla\psi|^2) \,dxdt\\
	&&-\dfrac{s \lambda^2}2 \int_{-T}^{T} \int_{\Omega} \varphi |w|^2 \Delta(|\nabla \psi|^2) \,dxdt\\
	&&+\dfrac {s \lambda^3}2 \int_{-T}^{T} \int_{\Omega} \varphi  |w|^2\Delta \psi (|\partial_t \psi|^2-|\nabla\psi|^2)\,dxdt\\
	&&+\dfrac {s \lambda^4}2 \int_{-T}^{T} \int_{\Omega} \varphi  |w|^2 |\nabla\psi|^2(|\partial_t \psi|^2-|\nabla\psi|^2)\,dxdt\\
	&&-s \lambda^3\int_{-T}^{T} \int_{\Omega} \varphi |w|^2 \nabla\psi \cdot \nabla(|\nabla\psi|^2) \,dxdt.
\end{eqnarray*}
Using the fact that $w|_{\partial\Omega\times (-T,T)}=0$, $\nabla w = (\partial_\nu w) \nu$ and $|\nabla w|^2 = |\partial_\nu w|^2$ on $\partial\Omega\times (-T,T)$: 
\begin{eqnarray*}
	I_{23}&=&\int_{-T}^{T} \int_{\Omega}-\Delta w(-2s\lambda\varphi (\partial_t w\partial_t \psi -\nabla w \cdot \nabla\psi))\,dxdt\\
	&=& s\lambda \int_{-T}^{T} \int_{\Omega} \varphi |\nabla w|^2 (\partial_t^2 \psi - \Delta \psi )\,dxdt
	+2s\lambda^2 \int_{-T}^{T} \int_{\Omega} \varphi | \nabla \psi\cdot\nabla w |^2 \,dxdt\\
	&&-2s\lambda^2 \int_{-T}^{T} \int_{\Omega} \varphi\partial_t w\, \partial_t \psi \nabla w\cdot\nabla\psi \,dxdt
	+s\lambda^2 \int_{-T}^{T} \int_{\Omega} \varphi | \nabla w |^2  (|\partial_t \psi|^2-|\nabla\psi|^2) \,dxdt\\
	&&- s\lambda \int_{-T}^{T} \int_{\partial\Omega} \varphi | \partial_\nu w|^2 \nabla \psi \cdot \nu \,d\sigma dt
	+4s\lambda \int_{-T}^{T} \int_{\Omega} \varphi  |\nabla w|^2\,dxdt
\end{eqnarray*}
since $D^2 \psi = 2 Id$.

One easily writes
\begin{eqnarray*}
	I_{31}&=&\int_{-T}^{T} \int_{\Omega}  s^2\lambda^2\varphi^2w(|\partial_t \psi|^2-|\nabla\psi|^2)
	((\alpha - 1)s \lambda\varphi w(\partial_t^2 \psi - \Delta \psi ))\,dxdt\\
	&=&(\alpha -1) s^3\lambda^3 \int_{-T}^{T} \int_{\Omega} \varphi^3 |w|^2(\partial_t^2 \psi - \Delta \psi ) (|\partial_t \psi|^2-|\nabla\psi|^2)\,dxdt
\end{eqnarray*}
and
\begin{eqnarray*}
	I_{32}&=&\int_{-T}^{T} \int_{\Omega}   s^2\lambda^2\varphi^2w(|\partial_t \psi|^2-|\nabla\psi|^2)
	(-s\lambda^2 \varphi w(|\partial_t \psi|^2-|\nabla\psi|^2))\,dxdt\\
	&=&-s^3\lambda^4 \int_{-T}^{T} \int_{\Omega}\varphi^3  |w|^2 (|\partial_t \psi|^2-|\nabla\psi|^2)^2\,dxdt.
\end{eqnarray*}
Finally, some integrations by part enable to obtain
\begin{eqnarray*}
	I_{33}&=&\int_{-T}^{T} \int_{\Omega}  s^2\lambda^2\varphi^2w(|\partial_t \psi|^2-|\nabla\psi|^2) 
	(-2s\lambda  \varphi (\partial_t w \partial_t \psi -\nabla w\cdot\nabla\psi)) \,dxdt\\
	&=&s^3\lambda ^3 \int_{-T}^{T} \int_{\Omega}\varphi^3 |w|^2 (\partial_t^2 \psi - \Delta \psi ) (|\partial_t \psi|^2-|\nabla\psi|^2) \,dxdt\\
	&&+2s^3\lambda ^3 \int_{-T}^{T} \int_{\Omega}\varphi^3  |w|^2 
		(\partial_t^2 \psi|\partial_t \psi|^2 + 2 |\nabla\psi|^2)\,dxdt\\
	&&+3s^3\lambda^4 \int_{-T}^{T} \int_{\Omega}\varphi^3  |w|^2  (|\partial_t \psi|^2-|\nabla\psi|^2)^2\,dxdt.
\end{eqnarray*}

Gathering all the terms that have been computed, we get 
\begin{equation}\label{P1P2}
	\begin{aligned}
	\int_{-T}^{T} \int_{\Omega} P_{1}&wP_{2}w\,dxdt\\
	=& ~2s\lambda \int_{-T}^{T} \int_{\Omega} \varphi | \partial_t w | ^2 \partial_t^2 \psi \,dxdt
	- \alpha s\lambda \int_{-T}^{T} \int_{\Omega} \varphi | \partial_t w | ^2 (\partial_t^2 \psi - \Delta \psi ) \,dxdt\\
	&+~2s\lambda^2 \int_{-T}^{T} \int_{\Omega}\varphi \left( | \partial_t w | ^2 |\partial_t \psi|^2
	-2\partial_t w\, \partial_t \psi  \nabla w\cdot \nabla\psi + | \nabla \phi\cdot\nabla w |^2 \right) \,dxdt\\
	&+~4s\lambda \int_{-T}^{T} \int_{\Omega} \varphi   |\nabla w|^2\,dxdt
	+\alpha s \lambda \int_{-T}^{T} \int_{\Omega} \varphi |\nabla w|^2 (\partial_t^2 \psi - \Delta \psi ) \,dxdt\\
	&-~s\lambda \int_{-T}^{T} \int_{\partial\Omega}\varphi \left| \partial_\nu w\right |^2 \nabla\psi\cdot\nu(x)\,d\sigma dt\\
	&+~2s^3\lambda^4 \int_{-T}^{T} \int_{\Omega} \varphi^3 |w|^2 (|\partial_t \psi|^2-|\nabla\psi|^2)^2\,dxdt\\
	&+~2 s^3\lambda ^3 \int_{-T}^{T} \int_{\Omega}\varphi^3  |w|^2
		(\partial_t^2 \psi|\partial_t \psi|^2 + 2 |\nabla\psi|^2 )\,dxdt\\
	&+~\alpha s^3\lambda^3 \int_{-T}^{T} \int_{\Omega}\varphi^3  |w|^2
		(\partial_t^2 \psi - \Delta \psi ) (|\partial_t \psi|^2-|\nabla\psi|^2)\,dxdt+~ X_1,
	\end{aligned}
\end{equation}
where $X_1$ gathers the non-dominating terms and satisfies
$$
|X_1| \leq M s\lambda^4 \int_{-T}^{T} \int_{\Omega} \varphi |w|^2  \,dxdt.
$$
Note that, since $\psi \geq 1$, we have, for $\lambda$ large enough, $\lambda^4 \leq e^{2\lambda \psi} = \varphi^2$, and therefore, 
\begin{equation}
	\label{X1}
	|X_1| \leq M s \int_{-T}^T \int_\Omega \varphi^3 |w|^2 dx dt
\end{equation}
for some $M$ independent of $s$ and $\lambda$.
Here and in the rest of the proof of Theorem \ref{CarlemanClas}, $M>0$ corresponds to a generic constant depending at least on $\Omega$ and $T$ but independent of $s$ and $\lambda$.\\

\noindent \textbf{Step 2. Dominating terms} 

On the one hand, about the first order derivative terms, one can notice that 
\begin{multline}\label{00}
	2s\lambda^2 \int_{-T}^{T} \int_{\Omega}\varphi \left( | \partial_t w | ^2 |\partial_t \psi|^2
		-2\partial_t w \partial_t \psi  \nabla w \cdot \nabla\psi + | \nabla \psi\cdot\nabla w |^2 \right)\,dxdt\\
	=2s\lambda^2 \int_{-T}^{T} \int_{\Omega}\varphi \left( \partial_t w \partial_t \psi- \nabla w \cdot \nabla\psi\right)^2 \,dxdt \geq 0.
\end{multline}
Since this term can vanish, we focus on the terms in $s\lambda$ in $| \partial_t w|^2 $ and $|\nabla w|^2$ and we want them to be strictly positive, what is equivalent to having:
$$
	2 \partial_t^2 \psi -\alpha (\partial_t^2 \psi - \Delta\psi) > 0 \quad \tn{ and } \quad
	4 +\alpha  (\partial_t^2 \psi - \Delta \psi ) > 0.
$$
By explicit computations, these two terms are positive if and only if $\alpha$ satisfies \eqref{ConditionOnAlpha}. This justifies the choice of the parameter $\alpha$. Note also that this can be satisfied only if $\beta\in(0,1)$.

As a direct consequence, we can write 
\begin{multline}
	\label{ordre1}
	2 s\lambda \int_{-T}^{T} \int_{\Omega} \varphi | \partial_t w|^2\partial_t^2 \psi \,dxdt 
	-\alpha s\lambda \int_{-T}^{T} \int_{\Omega} \varphi | \partial_t w|^2(\partial_t^2 \psi - \Delta\psi) \,dxdt\\
	+4s\lambda \int_{-T}^{T} \int_{\Omega} \varphi  |\nabla w|^2 \,dxdt
	+\alpha s \lambda \int_{-T}^{T} \int_{\Omega} \varphi |\nabla w|^2 (\partial_t^2 \psi - \Delta \psi ) \,dxdt\\
	\geq M s\lambda \int_{-T}^{T} \int_{\Omega} \varphi | \partial_t w|^2\,dxdt 
	+ M s\lambda \int_{-T}^{T} \int_{\Omega} \varphi | \nabla w|^2 \,dxdt.
\end{multline}

On the other hand, considering the $0$-th order terms, we can observe that:
\begin{multline*}
	2s^3\lambda^4 \int_{-T}^{T} \int_{\Omega}\varphi^3 |w|^2  (|\partial_t \psi|^2-|\nabla\psi|^2)^2\,dxdt\\
	+~\alpha s^3\lambda^3 \int_{-T}^{T} \int_{\Omega}\varphi^3 |w|^2(\partial_t^2 \psi - \Delta \psi )
		 (|\partial_t \psi|^2-|\nabla\psi|^2)\,dxdt\\
		 +~2s^3\lambda ^3 \int_{-T}^{T} \int_{\Omega}\varphi^3 |w^2| 
		(\partial_t^2 \psi|\partial_t \psi|^2 + 2|\nabla\psi|^2)\,dxdt\\
	= s^3\lambda ^3 \int_{-T}^{T} \int_{\Omega}\varphi^3 |w|^2  F_{\lambda}(\phi) \,dxdt,
\end{multline*}
where 
\begin{align*}
	F_{\lambda}(\phi)&= 
	2\lambda(|\partial_t \psi|^2-|\nabla\psi|^2)^2 +  2(\partial_t^2 \psi|\partial_t \psi|^2 + 2 |\nabla\psi|^2 )
		+ \alpha (\partial_t^2 \psi - \Delta \psi ) (|\partial_t \psi|^2-|\nabla\psi|^2)\\
	&= 2\lambda(|\partial_t \psi|^2-|\nabla\psi|^2)^2 +  (2\partial_t^2 \psi + \alpha (\partial_t^2 \psi - \Delta \psi ))(|\partial_t \psi|^2 - |\nabla\psi|^2 )
		+ 2(\partial_t^2 \psi + 2) |\nabla\psi|^2\\
	&= 2 \lambda X^2+  (2\partial_t^2 \psi + \alpha (\partial_t^2 \psi - \Delta \psi )) X  +16(1-\beta)|x-x_0|^2.
\end{align*}
with $X = |\partial_t \psi|^2-|\nabla\psi|^2$.

Since $x_0\not\in\overline \Omega$ and $\beta\in(0,1)$, we have $16(1-\beta)|x-x_0|^2\geq c^*>0$. 
Therefore, we are considering a polynomial $A(X) \geq 2\lambda X^2 - 2\left( \alpha (\beta +n) + 2 \beta \right) X+c^*$ and taking $\lambda>0$ large enough, the minimum of $A$ will be strictly positive.
Consequently, 
\begin{equation}
	\label{ordre0}
	s^3\lambda ^3 \int_{-T}^{T} \int_{\Omega} \varphi^3|w^2| F_{\lambda}(\phi) \,dxdt\\
	\geq M s^3\lambda^3 \int_{-T}^{T} \int_{\Omega} \varphi^3 | w|^2\,dxdt.
\end{equation}\\

Thus, plugging \eqref{00}, \eqref{ordre1} and \eqref{ordre0} in \eqref{P1P2}, and since $\nabla \psi = 2(x-x_0)$, we  obtain
\begin{multline*}
	\int_{-T}^{T} \int_{\Omega}P_{1}wP_{2}w\,dxdt  
	+ 2 s \lambda\int_{-T}^{T} 	\int_{\partial\Omega}\varphi \left| \partial_\nu w\right|^2 (x-x_0)\cdot\nu(x)\,d\sigma dt + |X_1| \\
	\geq M s\lambda \int_{-T}^{T} \int_{\Omega} \varphi\left(| \partial_t w| ^2+|\nabla w|^2\right)\,dxdt
		+ M s^3\lambda ^3\int_{-T}^{T} \int_{\Omega}\varphi^3 | w| ^2\,dxdt.
\end{multline*}
Since we also have
\begin{eqnarray*}
	\int_{-T}^{T} \int_{\Omega}| Pw-Rw|^2\,dxdt 
		&\leq &
		2\int_{-T}^{T} \int_{\Omega}| Pw|^2\,dxdt 
		+2\int_{-T}^{T} \int_{\Omega}| Rw|^2\,dxdt
		\\
		&\leq &
		M\int_{-T}^{T} \int_{\Omega}| Pw|^2\,dxdt 
		+Ms^2 \lambda^2\int_{-T}^{T} \int_{\Omega} \varphi^2 |w|^2 \,dxdt,
\end{eqnarray*}
using \eqref{PP} and \eqref{X1}, we get 
\begin{gather*}
	s\lambda \int_{-T}^{T} \int_{\Omega}\left(| \partial_t w| ^2+|\nabla w|^2\right) \varphi\,dxdt
		+ s^3\lambda ^3\int_{-T}^{T} \int_{\Omega} | w| ^2\varphi^3\,dxdt\\
	+\int_{-T}^{T} \int_{\Omega}\left(| P_{1}w| ^2 + | P_{2}w| ^2\right)\,dxdt\\
	\leq M\int_{-T}^{T} \int_{\Omega}| Pw|^2\,dxdt 
		+M s \lambda\int_{-T}^{T} \int_{\Gamma_0}\varphi \left| \partial_\nu w\right|^2 
			(x-x_0)\cdot\nu(x)\,d\sigma dt \\
	+~M s \int_{-T}^{T} \int_{\Omega}\varphi^3 | w| ^2\,dxdt  + Ms^2 \lambda^2\int_{-T}^{T} \int_{\Omega} \varphi^2 |w|^2 \,dxdt.
\end{gather*}
We take now $s_0$ large enough so that the terms of the last line (coming from  $X_1$ and $| Rw|^2$) are absorbed by the dominant term in $s^3\lambda^3  |w|^2\varphi^3$ as soon as $s\geq s_0$. Using also the condition \eqref{GCC-multiplier} on $\Gamma_0$, we finally obtain for some positive constant $M$,
\begin{gather}
	s\lambda\int_{-T}^{T} \int_{\Omega} \varphi(| \partial_t w|^2+|\nabla w|^2)\,dxdt
	+ s^3\lambda ^3\int_{-T}^{T} \int_{\Omega} \varphi^3 |w|^2\,dxdt\nonumber\\
	+\int_{-T}^{T} \int_{\Omega}|P_1 w|^2\,dxdt+\int_{-T}^{T} \int_{\Omega}|P_2 w|^2\,dxdt\label{CarlemW}\\
	\leq M\int_{-T}^{T} \int_{\Omega}| Pw|^2\,dxdt 
	+Ms\lambda \int_{-T}^{T} \int_{\Gamma_0}  \varphi\left|\partial_\nu w\right|^2\,d\sigma dt \nonumber
\end{gather}
for all $ s\geq s_0$ and $ \lambda \geq  \lambda_0$.\\

\noindent \textbf{Step 3. Back to the variable $v$} 

Since $w=ve^{s\varphi}$, we have
\begin{align*}
	e^{2s\varphi}|\partial_t v|^2 &\leq 2|\partial_t w|^2+2s^2|\partial_t\varphi|^2|w|^2\leq 2|\partial_t w|^2+Cs^2\lambda^2 \varphi^2 |w|^2, && \tn{ in } \Omega \times (-T,T),\\
	e^{2s\varphi}|\nabla v|^2 &\leq 2|\nabla w|^2+2s^2|\nabla\varphi|^2|w|^2\leq  2|\nabla w|^2+Cs^2\lambda^2 \varphi^2 |w|^2,&& \tn{ in } \Omega \times (-T,T),\\
	e^{2s\varphi}\left|\partial_\nu v\right|^2&= \left|\partial_\nu w\right|^2, && \tn{ on }\partial\Omega \times (-T,T).
\end{align*}
Using \eqref{defP} that gives by construction $P w = e^{s\varphi} \square v$, 
we can go back to the variable $v$ in \eqref{CarlemW} and obtain that there exists some positive constant $M$ such that for all $ s\geq s_0$ and $ \lambda \geq \lambda_0$,
\begin{gather*}
	s\lambda \int_{-T}^{T} \int_{\Omega} \varphi e^{2s\varphi}(|\partial_t v|^2 + |\nabla v |^2)\,dxdt 
	+ s^3\lambda^3\int_{-T}^{T} \int_{\Omega} \varphi^3 e^{2s\varphi}|v|^2\,dxdt\\
	+ \int_{-T}^{T} \int_{\Omega} |P_1 (e^{s\varphi} v)|^2 \,dxdt + \int_{-T}^{T} \int_{\Omega} |P_2 (e^{s\varphi} v)|^2\,dxdt\\
	\leq M\int_{-T}^{T} \int_{\Omega} e^{2s\varphi}| \square v|^2\,dxdt 
	+ Ms\lambda \int_{-T}^{T} \int_{\Gamma_{0}} \varphi e^{2s\varphi} \left|\partial_\nu v\right|^2\,d\sigma dt.
\end{gather*}
This concludes the proof of Theorem \ref{CarlemanClas}.
\end{proof}

In the sequel, we will fix $\lambda = \lambda_0$ and use the fact that $\varphi$ then is bounded from below by~$1$ and from above by some constants depending on $\lambda$. Since $\lambda$ is fixed, we can put it into the constants and obtain the following result: 

\begin{corollary}\label{CorCarlemanClas}
Assume the Gamma-condition \eqref{GCC-multiplier}.

Then for every $\beta\in(0,1)$, there exist  $\lambda>0$, $s_{0}>0$ 
and a positive constant $M$
 such that for all $s\geq s_{0}$,
\begin{multline}
	s \int_{-T}^{T} \int_{\Omega} e^{2s\varphi}(|\partial_t v|^2 + |\nabla v |^2)\,dxdt 
	+ s^3\int_{-T}^{T} \int_{\Omega} e^{2s\varphi}|v|^2\,dxdt
	+ \int_{-T}^{T} \int_{\Omega} |P_1 (e^{s\varphi} v)|^2 \,dxdt 
	\\
	\label{CarlemClas-sansLambda}
	\leq M\int_{-T}^{T} \int_{\Omega} e^{2s\varphi}| \square v|^2\,dxdt 
	+ Ms \int_{-T}^{T} \int_{\Gamma_{0}} e^{2s\varphi} \left|\partial_\nu v\right|^2\,d\sigma dt 
\end{multline}
for every $v\in L^2((-T,T);H_0^1(\Omega))$ satisfying $ \square v \in L^2(\Omega \times (-T,T))$, $\partial_\nu v \in L^2(\partial\Omega \times (-T,T))$ and $v(\pm T) = \partial_tv(\pm T)=0$ in $\Omega$, where $P_1$ is defined in \eqref{P1}.
\end{corollary}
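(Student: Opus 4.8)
The plan is to deduce Corollary~\ref{CorCarlemanClas} directly from Theorem~\ref{CarlemanClas}, simply by freezing the parameter $\lambda$ at the admissible threshold value $\lambda_0$ and then absorbing every remaining $\lambda$-dependent factor into the generic constant $M$. Concretely, I would fix $\lambda=\lambda_0$ (any value $\geq\lambda_0$ works equally well), keep the very same class of admissible functions $v$, and keep the same $s_0$ provided by the theorem. The only genuine input is a two-sided control of the weight $\varphi$ on the space-time cylinder. Since $C_0$ is chosen in \eqref{poids} so that $\psi\geq 1$ on $\Omega\times(-T,T)$, one has $\varphi=e^{\lambda\psi}\geq e^{\lambda}\geq 1$; and since $\overline\Omega\times[-T,T]$ is compact and $\psi$ is continuous, $\varphi$ is also bounded from above by a constant $C_\lambda$ depending only on $\lambda$, which is now a fixed number. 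Thus $1\leq\varphi\leq C_\lambda$ throughout, and the same bounds hold for $\varphi^3$.

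With these bounds in hand, the first step is to dominate the left-hand side of \eqref{CarlemClas-sansLambda} by the left-hand side of \eqref{CarlemClas}. Using $\varphi\geq1$ and $\varphi^3\geq1$ pointwise, one gets
\begin{gather*}
	s\int_{-T}^{T}\int_\Omega e^{2s\varphi}(|\partial_t v|^2+|\nabla v|^2)\,dxdt
	\leq \frac1\lambda\, s\lambda\int_{-T}^{T}\int_\Omega \varphi\, e^{2s\varphi}(|\partial_t v|^2+|\nabla v|^2)\,dxdt,\\
	s^3\int_{-T}^{T}\int_\Omega e^{2s\varphi}|v|^2\,dxdt
	\leq \frac1{\lambda^3}\, s^3\lambda^3\int_{-T}^{T}\int_\Omega \varphi^3 e^{2s\varphi}|v|^2\,dxdt,
\end{gather*}
while the term $\int |P_1(e^{s\varphi}v)|^2$ is common to both statements. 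Hence the left-hand side of \eqref{CarlemClas-sansLambda} is bounded by a fixed multiple, namely $\max\{1,1/\lambda,1/\lambda^3\}$, of the left-hand side of \eqref{CarlemClas}.

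The second step is to dominate the right-hand side of \eqref{CarlemClas} by that of \eqref{CarlemClas-sansLambda}. The source term $M\int e^{2s\varphi}|\square v|^2$ is identical in both, whereas for the boundary term I would use the upper bound $\varphi\leq C_\lambda$ on $\Gamma_0\times(-T,T)$ to write
$$
	Ms\lambda\int_{-T}^{T}\int_{\Gamma_0}\varphi\, e^{2s\varphi}|\partial_\nu v|^2\,d\sigma dt
	\leq M\lambda C_\lambda\, s\int_{-T}^{T}\int_{\Gamma_0} e^{2s\varphi}|\partial_\nu v|^2\,d\sigma dt .
$$
Chaining the two steps through the Carleman estimate \eqref{CarlemClas} then yields \eqref{CarlemClas-sansLambda} with a new constant $M$ that now depends on the (fixed) value of $\lambda$ but is still independent of $s$. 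I do not expect any real obstacle here: the argument is purely a bookkeeping of constants, and the single point requiring a little care is that on the left-hand side $\varphi$ appears only with nonnegative powers, so the lower bound $\varphi\geq1$ suffices, whereas the boundary term on the right must be controlled from above, which is exactly where the compactness bound $\varphi\leq C_\lambda$ enters.
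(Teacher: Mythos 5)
Your proposal is correct and coincides with the paper's own argument: the authors simply fix $\lambda=\lambda_0$, note that $\varphi$ is then bounded below by $1$ and above by a constant depending only on the (now fixed) $\lambda$, and absorb all $\lambda$-dependent factors into $M$. The bookkeeping you carry out on each side of the estimate is exactly what is implicit in their one-line justification.
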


\noindent In the following, $M$ denotes various constants that do not depend on the parameter $s$.

\subsection{Weighted Poincar\'e inequality}
We prove here a weighted version of the Poincar\'e inequality that will be used thereafter.

\begin{lemma}\label{Poincare}
Let $\tilde \varphi \in C^2 (\overline{\Omega})$ and assume that the weight $\tilde \varphi$ defined on $\overline{\Omega}$ satisfies
\begin{equation}\label{hyp1}
\inf_\Omega |\nabla \tilde \varphi| \geq \delta >0.
\end{equation}
Then there exist $s_0>0$ and $M>0$ such that, for all $s \geq s_0$ and for all $z \in  H_0^1(\Omega)$,
\begin{equation}
	\label{PoincareEst}
	s^2  \int_{\Omega} e^{2s\tilde \varphi} |z|^2 \,dx
	\leq M \int_{\Omega}e^{2s\tilde \varphi} |\nabla z|^2 \,dx.
\end{equation}
\end{lemma}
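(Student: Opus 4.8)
The plan is to establish a weighted Poincaré inequality by a direct integration-by-parts argument, exploiting the nondegeneracy $|\nabla\tilde\varphi|\geq\delta>0$ to generate a term quadratic in $z$ with a favorable sign and a leading power of $s$.

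First I would introduce the substitution $w = e^{s\tilde\varphi} z$, so that $w \in H_0^1(\Omega)$ as well (since $z$ vanishes on $\partial\Omega$ and $e^{s\tilde\varphi}$ is smooth and bounded on $\overline\Omega$), and translate the claimed estimate \eqref{PoincareEst} into an inequality in the conjugated variable. Concretely, $\nabla z = e^{-s\tilde\varphi}(\nabla w - s w \nabla\tilde\varphi)$, so the right-hand side becomes $M\int_\Omega |\nabla w - s w \nabla\tilde\varphi|^2\,dx$, while the left-hand side is $s^2\int_\Omega |w|^2\,dx$. Thus it suffices to prove
\begin{equation*}
	s^2 \int_\Omega |w|^2\,dx \leq M \int_\Omega |\nabla w - s w \nabla\tilde\varphi|^2\,dx
\end{equation*}
for all $w \in H_0^1(\Omega)$ and $s$ large.

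The key step is to expand the right-hand integrand:
\begin{equation*}
	\int_\Omega |\nabla w - s w \nabla\tilde\varphi|^2\,dx = \int_\Omega |\nabla w|^2\,dx - 2s \int_\Omega w \,\nabla w \cdot \nabla\tilde\varphi\,dx + s^2 \int_\Omega |w|^2 |\nabla\tilde\varphi|^2\,dx.
\end{equation*}
The cross term is handled by noting $2 w \nabla w = \nabla(w^2)$ and integrating by parts; since $w\in H_0^1(\Omega)$ the boundary contribution vanishes, giving $-2s\int_\Omega w\nabla w\cdot\nabla\tilde\varphi\,dx = s\int_\Omega w^2 \Delta\tilde\varphi\,dx$. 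Using $|\nabla\tilde\varphi|^2 \geq \delta^2$ and $|\Delta\tilde\varphi| \leq C$ on $\overline\Omega$ (finite since $\tilde\varphi\in C^2(\overline\Omega)$), I obtain the lower bound
\begin{equation*}
	\int_\Omega |\nabla w - s w \nabla\tilde\varphi|^2\,dx \geq \int_\Omega |\nabla w|^2\,dx + s^2\delta^2 \int_\Omega |w|^2\,dx - C s \int_\Omega |w|^2\,dx.
\end{equation*}
Discarding the nonnegative gradient term and choosing $s_0$ large enough that $Cs \leq \tfrac{1}{2}s^2\delta^2$ for $s\geq s_0$ (i.e. $s_0 = 2C/\delta^2$), the right-hand side dominates $\tfrac{1}{2}s^2\delta^2\int_\Omega|w|^2\,dx$, which yields the desired estimate with $M = 2/\delta^2$.

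The argument is essentially elementary and the only subtlety is justifying the integration by parts and the substitution $w = e^{s\tilde\varphi}z$ at the $H_0^1$ level rather than for smooth functions; this is routine by a density argument, approximating $z\in H_0^1(\Omega)$ by $C_c^\infty(\Omega)$ functions and passing to the limit, using the boundedness of $e^{s\tilde\varphi}$, $\nabla\tilde\varphi$ and $\Delta\tilde\varphi$ on $\overline\Omega$. I do not anticipate a genuine obstacle here; the main point to track is simply that the absorption of the $\Delta\tilde\varphi$ term forces $s_0$ to depend on $\delta$ and on $\|\tilde\varphi\|_{C^2(\overline\Omega)}$, which is consistent with the statement.
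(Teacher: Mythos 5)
Your proof is correct and rests on the same mechanism as the paper's: an integration by parts that trades the cross term for a $\Delta\tilde\varphi$ contribution, which is then absorbed for large $s$ using $|\nabla\tilde\varphi|\geq\delta$. The only difference is presentational --- the paper works directly in $z$ via the identity $\nabla\cdot(e^{2s\tilde\varphi}\nabla\tilde\varphi)=e^{2s\tilde\varphi}\Delta\tilde\varphi+2se^{2s\tilde\varphi}|\nabla\tilde\varphi|^2$ and closes with Cauchy--Schwarz, whereas you conjugate by $e^{s\tilde\varphi}$ and expand the square --- so the two arguments are equivalent.
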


\begin{proof} We begin with the following computation, that uses the identity
$\nabla\cdot (e^{2s\tilde \varphi} \nabla \tilde \varphi ) = e^{2s\tilde \varphi} \Delta\tilde \varphi + 2s e^{2s\tilde \varphi} |\nabla\tilde \varphi|^2$:
\begin{eqnarray}
	s^2  \int_{\Omega} e^{2s\tilde \varphi} |z|^2 |\nabla \tilde \varphi|^2 \,dx
	&= &
	\frac{s}{2}  \int_{\Omega}  |z|^2 \left( \nabla \cdot \left(e^{2s\tilde \varphi} \nabla \tilde \varphi \right) - e^{2s\tilde \varphi} \Delta \tilde \varphi \right) \,dx 
	\nonumber\\
	& = &
	 - s  \int_{\Omega} e^{2s\tilde \varphi} z   \nabla z \cdot \nabla \tilde \varphi  \,dx - \frac{s}{2} \int_{\Omega} e^{2s\tilde \varphi} |z|^2  \Delta \tilde \varphi  \,dx.\label{eq11}
\end{eqnarray}
Since $\tilde \varphi \in C^2 (\overline{\Omega})$ and since we suppose \eqref{hyp1}, the last term in \eqref{eq11} can be bounded as follows:
$$
	- \frac{s}{2} \int_{\Omega} e^{2s\tilde \varphi}  |z|^2  \Delta \tilde \varphi \,dx \leq M s  \int_{\Omega} e^{2s\tilde \varphi} |z|^2 |\nabla \tilde \varphi|^2 \,dx.
$$
Then, for $s$ sufficiently large, this term is absorbed in the left hand side of (\ref{eq11}). This implies the following estimate:
\begin{align*}
	s^2  \int_{\Omega} e^{2s\tilde \varphi} |z|^2 |\nabla \tilde \varphi|^2 \,dx
	& \leq 
	M s  \int_{\Omega} \left| e^{2s\tilde \varphi} z \nabla \tilde \varphi \right| \left| e^{2s\tilde \varphi}\nabla z \right| \,dx 
	\\
	& \leq
	M \left( s^2 \int_{\Omega} e^{2s\tilde \varphi} |z|^2 |\nabla \tilde \varphi |^2 \,dx \right)^{1/2} \left( \int_{\Omega} e^{2s\tilde \varphi} |\nabla z|^2 \,dx \right)^{1/2}.
\end{align*}
This yields \eqref{PoincareEst} and concludes the proof of Lemma \ref{Poincare}.
\end{proof}

Let us emphasize that the weight function $\varphi$ defined by \eqref{poids} is such that for all $t \in (-T,T)$, $\varphi(\cdot, t)$ satisfies assumption \eqref{hyp1} of Lemma~\ref{Poincare} since $x_0\not\in\overline\Omega$ and $\nabla\varphi = 2\lambda(x-x_0)\varphi $.

\subsection{A Carleman estimate in time $T$ large enough}
When the time $T$ is large enough in the sense of \eqref{GCC-Time}, we claim that the conditions at times $\pm T$ can be removed of the assumptions of Theorem~\ref{CarlemanClas}. Roughly speaking, this will follow from an energy argument coupled to the Carleman estimate \eqref{CarlemClas}. The result is given in the following theorem:

\begin{theorem}\label{Carlemangral}
Assume the multiplier condition \eqref{GCC-multiplier} and the time condition  \eqref{GCC-Time}. Define the weight functions $\varphi$ as in \eqref{poids} with $\beta \in (0,1)$ being such that \eqref{GCC-Time-Beta} holds.

Then there exist 
$s_{0}>0$ and a positive constant $M $
such that for all $ s \geq s_{0}$:
\begin{equation}
	\begin{aligned}\label{Carlemgral}
		s \int_{-T}^{T} \int_{\Omega} &e^{2s\varphi}\left(|\partial_t z|^2 + |\nabla z |^2 \right)\,dxdt 
		+ s^3\int_{-T}^{T} \int_{\Omega} e^{2s\varphi}|z|^2\,dxdt\\
		&\leq M\int_{-T}^{T} \int_{\Omega} e^{2s\varphi}| \square z|^2\,dxdt 
		+ Ms \int_{-T}^{T} \int_{\Gamma_{0}} e^{2s\varphi} \left|\partial_\nu z\right|^2\,d\sigma dt,
	\end{aligned}
\end{equation}
for all $z\in L^2((-T,T);H_0^1(\Omega))$ satisfying $\square z \in L^2(\Omega\times (-T,T))$ and 
$\partial_\nu z \in L^2(\partial\Omega\times (-T,T))$.
\end{theorem}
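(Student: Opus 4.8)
The plan is to deduce \eqref{Carlemgral} from Corollary~\ref{CorCarlemanClas} by localizing in time with a cut-off, paying for the missing end conditions at $t=\pm T$ with a weighted energy estimate that exploits the time condition. Write $r_*=\sup_{x\in\Omega}|x-x_0|$, so that \eqref{GCC-Time-Beta} means exactly $t_*:=r_*/\beta<T$; I then fix $t_*<t_1<t_2<T$ and a cut-off $\eta\in C^\infty_c(-T,T)$ with $0\le\eta\le1$ and $\eta\equiv1$ on $[-t_2,t_2]$, so that $\mathrm{supp}\,\eta'\subset\{t_2\le|t|\le T\}$. Since $\eta z$ vanishes together with $\partial_t(\eta z)$ at $t=\pm T$, Corollary~\ref{CorCarlemanClas} applies to $\eta z$; expanding $\square(\eta z)=\eta\,\square z+2\eta'\partial_t z+\eta''z$, using $\eta^2\le1$, and absorbing the zero-order commutator $\eta''z$ with the weighted Poincar\'e inequality of Lemma~\ref{Poincare}, I obtain the Carleman estimate with its left-hand side integrated only over the central slab $\{|t|\le t_2\}$, up to a remainder $BT\le M\int_{t_2\le|t|\le T}\int_\Omega e^{2s\varphi}(|\partial_t z|^2+|\nabla z|^2)\,dxdt$ coming from the commutator terms, which is supported near $\pm T$.

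The heart of the argument is a weighted energy estimate controlling the outer slab $\{t_2\le|t|\le T\}$ and the remainder $BT$. Introduce $e(t)=\int_\Omega e^{2s\varphi(\cdot,t)}(|\partial_t z|^2+|\nabla z|^2)\,dx$ and $g(t)=\int_\Omega e^{2s\varphi}|\square z|^2\,dx$. Differentiating $e$ and integrating by parts in $x$ (the boundary term vanishes because $\partial_t z=0$ on $\partial\Omega$), and using $\partial_t\psi=-2\beta t$, $\nabla\psi=2(x-x_0)$, the weight-derivative contributions combine into $4s\lambda\int_\Omega\varphi e^{2s\varphi}(|x-x_0|-\beta t)(|\partial_t z|^2+|\nabla z|^2)\,dx$, plus the source $2\int_\Omega e^{2s\varphi}\partial_t z\,\square z\,dx$. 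This is where the time condition is decisive: for $t\ge t_1$ one has $|x-x_0|-\beta t\le r_*-\beta t_1<0$, so (using $\varphi\ge1$) the combined term is bounded above by $-cs\,e(t)$ for some $c>0$. Splitting the source by Young's inequality as $\tfrac{cs}{2}e(t)+\tfrac{2}{cs}g(t)$, I reach the \emph{dissipative} differential inequality
\[
e'(t)\le -\tfrac{cs}{2}\,e(t)+\tfrac{2}{cs}\,g(t),\qquad t_1\le t\le T,
\]
and symmetrically on $[-T,-t_1]$ by the $t\mapsto-t$ symmetry of $\varphi$. Integrating from a reference $\tau\in[t_1,t_2]$, the \emph{decaying} factor $e^{-\frac{cs}{2}(t-\tau)}\le1$ yields $e(t)\le e(\tau)+\tfrac{2}{cs}\int_{-T}^Tg$; averaging $\tau$ over $[t_1,t_2]$ bounds $e(t)$, for $t\in[t_2,T]$, by $\tfrac{C}{s}\big(s\int_{t_1}^{t_2}\!\int_\Omega e^{2s\varphi}(|\partial_t z|^2+|\nabla z|^2)\big)+\tfrac{C}{s}\int_{-T}^T g$, where the bracket is part of the central left-hand side since $[t_1,t_2]\subset\{|t|\le t_2\}$.

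Combining the two is then bookkeeping. For $BT$, integrating the averaged bound over $t\in[t_2,T]$ gives $BT\le\tfrac{C}{s}\,(\text{central left-hand side})+\tfrac{C}{s}\int_{-T}^T g$; inserting this into the first step and absorbing the $\tfrac{C}{s}\times(\text{central left-hand side})$ term for $s$ large proves the Carleman estimate on the central slab, i.e. the central left-hand side is $\le M\int_{-T}^T\!\int_\Omega e^{2s\varphi}|\square z|^2+Ms\int_{-T}^T\!\int_{\Gamma_0}e^{2s\varphi}|\partial_\nu z|^2$. Feeding this back controls the outer slab: multiplying the averaged energy bound by $s$, the first term becomes $C\times(\text{central left-hand side})$, now estimated, while the source becomes $s\cdot\tfrac{C}{s}\int_{-T}^T g=C\int_{-T}^T g$ and carries \emph{no} extra power of $s$; the outer zero-order term $s^3\int e^{2s\varphi}|z|^2$ is reduced to $Cs\int e^{2s\varphi}|\nabla z|^2$ by Lemma~\ref{Poincare}. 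Adding the central and outer estimates gives \eqref{Carlemgral}. As usual all computations are first carried out for smooth $z$ and then extended by density.

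The main difficulty, and the place where the argument is genuinely delicate, is the tracking of the powers of $s$ in the energy step. A naive energy inequality with a time-dependent weight produces either a Gronwall factor $e^{Cs}$ or a source term with a spurious extra power of $s$, and either one is fatal against the sharp powers in \eqref{Carlemgral}. The sign gain $|x-x_0|-\beta t<0$ for $t>t_*$, available \emph{precisely} because $r_*<\beta T$, is what makes the inequality dissipative: it simultaneously provides the decaying ($\le1$) Gronwall factor and the $\tfrac1s$ weight on the source, and these two gains are exactly what align the final powers of $s$ with the statement.
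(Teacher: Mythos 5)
Your proof is correct and follows essentially the same route as the paper: truncate in time, apply the Carleman estimate of Corollary~\ref{CorCarlemanClas} to the truncated function, and control the commutator and outer-slab terms by a weighted energy estimate that becomes dissipative for $|t|$ beyond $\sup_{x\in\Omega}|x-x_0|/\beta$, combined with the weighted Poincar\'e inequality of Lemma~\ref{Poincare}. The only (harmless) difference is that you anchor the Gr\"onwall argument at a reference time $\tau\in[t_1,t_2]$ where the dissipative inequality already holds, whereas the paper anchors it at $T-\eta$ and then relates $E_s(T-\eta)$ to the central-slab average through a separate, non-dissipative energy identity carrying a factor $Ms$.
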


\begin{remark}
Let us emphasize that Theorem \ref{Carlemangral}, contrary to Corollary \ref{CorCarlemanClas}, does not require $z(\pm T) = \partial_t z(\pm T) = 0$ in $\Omega$. 
\end{remark}

\begin{proof}
From condition \eqref{GCC-Time-Beta} that states $	\sup_{x\in \Omega}|x-x_0| < \beta T$, 
we can choose $\eta \in (0,T)$ and $\varepsilon \in (0,1)$  such that
\begin{equation}
	\label{ConditionsT-Eta-Beta}
	(1- \varepsilon)(T- \eta) \beta  \geq \sup_{x \in \Omega} |x- x_0|.
\end{equation}
Then, explicit computations on $\psi(x,t) = |x-x_0|^2-\beta t^2 + C_0$ show that we have
\begin{equation}
\label{EstimatePsi-Eta}
	\forall t \in (-T,-T+\eta)\cup (T-\eta,T), \qquad 
	\left\{ 
		\begin{array}{l}
		\ds (1- \varepsilon)|\partial_t \psi(t) | \geq \sup_{x \in \Omega} |\nabla \psi(x,t)|, 
			\\
		\ds \sup_{x \in \Omega}\psi(x,t) < C_0 < \inf_{x \in \Omega} \psi(x,0).
		\end{array}
	\right.
\end{equation}

Hence, we introduce the cut-off function $\chi \in C^{\infty}_c(\mathbb{R})$ such that $0\leq \chi\leq 1$ and
\begin{equation}\label{chi}
	\chi(t) = \left\{\begin{array}{lll} 1, & \text{if} & -T+\eta \leq t \leq T-\eta, \\ 
	0, & \text{if}  &t \leq -T \quad \text{or} \quad t \geq T, \end{array}\right.
\end{equation}
and we set $v = \chi z$, in $\Omega \times (-T,T)$. Therefore, $v$ satisfies the required hypothesis $v(\pm T)= \partial_t v(\pm T) = 0$ in $\Omega$ and we can apply the Carleman estimate~\eqref{CarlemClas-sansLambda} of Corollary \ref{CorCarlemanClas} to $v$:
\begin{equation}
	\begin{aligned}\label{eq2}	
		s\int_{-T}^T \int_\Omega &e^{2s\varphi}\left( |\partial_t v|^2 + |\nabla v|^2 \right)\,dxdt
		+s^3\int_{-T}^T \int_\Omega e^{2s\varphi} |v|^2\,dxdt\\ 
		&\leq M \int_{-T}^T \int_\Omega e^{2s\varphi} |\square  v|^2 \,dxdt
		+M s\int_{-T}^T \int_{\Gamma_0} e^{2s\varphi} |\partial_\nu v|^2\,d\sigma dt.
	\end{aligned}
\end{equation}
One can calculate that
$$
	\square  v =\chi \square  z-2\chi'\partial_t z -\chi'' z,\qquad   \tn{in } \Omega \times (-T,T).
$$
Besides, the functions $\chi'$ and $\chi''$ have compact support in $(-T,-T+\eta)\cup (T-\eta,T)$. Thus, from (\ref{eq2}), we deduce the following estimate on $z$:
\begin{multline}\label{eq3}
		s\int_{-T+\eta}^{T-\eta} \int_\Omega e^{2s\varphi}\left( |\partial_t z|^2 + |\nabla z|^2 \right)dxdt		+s^3\int_{-T+\eta}^{T-\eta} \int_\Omega e^{2s\varphi} |z|^2\,dxdt\\
		 \leq M \int_{-T}^T \int_\Omega e^{2s\varphi} |\square  z|^2 \,dxdt		
		+M s\int_{-T}^T \int_{\Gamma_0} e^{2s\varphi} |\partial_\nu z|^2\,dxdt \\
		+M \int_{T-\eta}^T \int_{\Omega}  e^{2s\varphi}\left( |\partial_t z|^2 + |z|^2 \right)d\sigma dt 
		+M \int_{-T}^{-T+\eta} \int_{\Omega}  e^{2s\varphi}\left( |\partial_t z|^2 + |z|^2 \right)dxdt.
\end{multline}
We will show that the last two terms of (\ref{eq3}) can be absorbed in the left hand side if the parameter $s$ and the time $T$ are chosen sufficiently large. In order to do that, we introduce the following weighted energy:

$$
	E_s(t) = \frac{1}{2}\int_\Omega e^{2s\varphi(t)}\left( |\partial_t z(t)|^2 + |\nabla z(t)|^2  \right)dx. \qquad \forall t \in (-T,T),
$$
We first calculate:
$$
		\frac{dE_s}{dt} = s \int_\Omega e^{2s\varphi}  \partial_t \varphi  \left( |\partial_t z|^2 + |\nabla z|^2 \right)dx+\int_\Omega e^{2s\varphi}\left( \partial_t z \partial^2_t z + \nabla z\cdot \nabla (\partial_t z)  \right)dx.
$$
Thus, after an integration by parts,
\begin{equation}\label{violet}
	\begin{aligned}
		\frac{dE_s}{dt} - s\int_\Omega  e^{2s\varphi} \partial_t \varphi \left( |\partial_t z|^2 + |\nabla z|^2  \right)dx + 2 s\int_\Omega e^{2s\varphi}  \partial_t z   \nabla \varphi \cdot  \nabla z \,dx
		=  \int_\Omega e^{2s\varphi} \partial_t z \square z \,dx.
	\end{aligned}
\end{equation}

\noindent \textbf{Step 1: Term of \eqref{eq3} on $(T- \eta, T)$}

{\bf (a)} Thanks to the formula $2ab \leq a^2+b^2$, we can bound by below the left hand side of equality \eqref{violet} leading to:
$$
	\frac{dE_s}{dt}- s\int_\Omega  e^{2s\varphi} \left( \partial_t \varphi +  |\nabla \varphi|\right)  
	\left( |\partial_t z|^2 + |\nabla z|^2 \right)dx 
	\leq \int_\Omega e^{2s\varphi} \partial_t z \square z \, dx.
$$
According to \eqref{EstimatePsi-Eta} and \eqref{poids}, for $t \in (T- \eta, T)$
$$
	\inf_{\Omega}\left\{-(\partial_t \varphi + |\nabla \varphi|)\right\} 
	\geq \inf_{\Omega}\left\{ -\varepsilon \partial_t \varphi\right\} 
	\geq 2 \varepsilon \beta (T- \eta) e^{\lambda \psi} \geq 2 \varepsilon \beta (T- \eta) : = c_* >0.
$$ 
Thus, 
\begin{equation}\label{red}
	\forall t\in (T-\eta,T), \quad \frac{dE_s}{dt}+ sc_* \int_\Omega  e^{2s\varphi}\left( |\partial_t z|^2 + |\nabla z|^2\right)dx
	\leq \int_\Omega e^{2s\varphi} \partial_t z \square z \, dx.
\end{equation}
Now, using the formula $2ab \leq \epsilon a^2+\dfrac{b^2}{\epsilon}$ with $\epsilon = sc_*$, we can bound the right hand side of (\ref{red}) as follows:
$$
	\left| \int_\Omega e^{2s\varphi} \partial_t z  \square z\,dx \right| 
	\leq \frac{sc_*}{2}   \int_\Omega  e^{2s\varphi} |\partial_t z|^2\,dx 
	+ \frac{1}{2sc_* }  \int_\Omega e^{2s\varphi} |\square  z|^2\,dx.
$$
The first term of the right hand side is absorbed by the left hand side of (\ref{red}). We obtain
$$
	\frac{dE_s}{dt}+ \frac{ sc_*}{2}  \int_\Omega  e^{2s\varphi}\left( |\partial_t z|^2 + |\nabla z|^2\right)dx
	\leq \frac{1}{2sc_*} \int_\Omega e^{2s\varphi}  |\square z|^2\,dx
$$
or, equivalently,
$$
	\frac{dE_s}{dt}+ s c_* E_s  \leq \frac{1}{2sc_*} \int_\Omega e^{2s\varphi}  |\square z|^2\,dx.
$$
Using the Gr\" onwall lemma, we can write, for all $t \in (T-\eta,T)$,
\begin{eqnarray}
		E_s(t)
			& \leq &
		E_s(T-\eta)e^{s c_* (T-\eta-t)}  
			+ 
		\frac{1}{2sc_*} \int_{T-\eta}^t e^{sc_* (\tau- t)} \int_\Omega e^{2s\varphi(\tau)}  |\square z(\tau)|^2 \,dx d\tau \nonumber
		\\
			&\leq &
		E_s(T- \eta) e^{-sc_*(t- (T-\eta))} 
		+
		\frac{1}{2sc_*} \int_{T- \eta}^T  \int_\Omega e^{2s\varphi(\tau)}  |\square  z(\tau) |^2 \,dx d \tau.\label{rose1}
\end{eqnarray}
Integrating this relation for $t$ between $T-\eta$ and $T$, we obtain:
\begin{eqnarray}
		\int_{T-\eta}^T E_s(t) dt &\leq& E_s(T- \eta) \int_{T-\eta}^T e^{-sc_*(t- (T-\eta))} dt 
		+\frac{\eta}{2sc_*} \int_{-T}^T  \int_\Omega e^{2s\varphi}  |\square  z |^2 \,dx dt
		\nonumber \\
		&\leq& \frac{M}{s} E_s(T- \eta) 
		+ \frac{M}{s} \int_{-T}^T  \int_\Omega e^{2s\varphi}  |\square  z |^2 \,dxdt. \label{EstIntegreeT-eta}
\end{eqnarray}

{\bf (b)} Now we want to estimate $E_s(T- \eta)$ by $E_s(\tau)$ for $\tau \in (-T+\eta,T-\eta)$
. We use equality \eqref{violet} that we integrate between $\tau$ and $T-\eta$:
$$
	\begin{aligned}
		E_s(T-\eta) - &E_s(\tau) 
		= s\int_\tau^{T-\eta }\int_\Omega e^{2s\varphi}  \partial_t \varphi \left( |\partial_t z|^2 + |\nabla z|^2  \right)dxdt \\
		&-2s \int_\tau^{T-\eta }\int_\Omega e^{2s\varphi}  \partial_t z   \nabla \varphi \cdot  \nabla z \,dxdt
		+  \int_\tau^{T-\eta }\int_\Omega e^{2s\varphi} \partial_t z\square z dxdt.
	\end{aligned}
$$
We bound the right hand side using Cauchy-Schwarz (and bounds on $\varphi$ and its derivatives): 
$$
	E_s(T-\eta) - E_s(\tau) \leq M s \int_{-T+\eta}^{T-\eta }E_s(t)dt + \frac{M}{s}\int_{-T}^{T}\int_\Omega e^{2s\varphi} | \square z |^2\,dxdt.
$$
Integrating for $\tau$ between  $-T+\eta$ and $T-\eta$, since $s$ is large,
\begin{equation}\label{orange1} 
	E_s(T-\eta) \leq  M s \int_{-T+\eta}^{T-\eta }E_s(t)dt
	+ \frac{M}{s}\int_{-T}^{T}\int_\Omega e^{2s\varphi} | \square z |^2\,dxdt.
\end{equation}

{\bf (c)} Finally, thanks to \eqref{EstIntegreeT-eta} and \eqref{orange1}, we deduce
\begin{multline}\label{firstterm1}
	\int_{T-\eta}^T \int_{\Omega}  e^{2s\varphi}\left( |\partial_t z|^2 + | \nabla z|^2 \right) dxdt\\
	\leq  
	M\int_{-T+\eta}^{T-\eta}\int_{\Omega} e^{2s\varphi}\left(|\partial_t z|^2 + |\nabla z |^2 \right)dxdt
	+ 
	\frac{M}{s}\int_{-T}^T \int_\Omega e^{2s\varphi} |\square  z|^2  \,dxdt	.
\end{multline}
Moreover, combining it with the weighted Poincar\'e estimate of Lemma \ref{Poincare}, we get
\begin{multline}
	\label{EstimeeEnergieT}
	s \int_{T- \eta}^T \int_\Omega e^{2s\varphi}\left(|\partial_t z|^2 + |\nabla z |^2  + s^2|z|^2\right)dxdt
	\leq 
	M s \int_{T-\eta}^T E_s(t) dt 
	\\
	\leq M s \int_{-T+\eta}^{T-\eta }E_s(t)dt
	+ M \int_{-T}^{T}\int_\Omega e^{2s\varphi} | \square z |^2\,dxdt.
\end{multline}

\noindent \textbf{Step 2: Term of \eqref{eq3}  on $(-T,-T+ \eta)$}

We want to obtain the same results as previously but on the interval $(-T,-T+ \eta)$. In order to do that, one can introduce $\tilde z(x,t) = z(x, -t)$ and apply the above estimates to $\tilde z$ (we make the change of variable $t\to -t$). Thus, equations \eqref{rose1}--\eqref{orange1} coincide with the following ones:
\begin{multline}
	\label{rose} 
	\forall t \in (-T, -T + \eta),
	\quad 
	E_s(t) ~ \leq ~  E_s(-T+ \eta) e^{-s c_*(-T+\eta-t)} 
	\\+ \frac{C}{s} \int_{-T}^{-T+\eta}  \int_\Omega e^{2s\varphi(\tau)}  |\square z(\tau) |^2 \, dxd\tau,
\end{multline}
\begin{eqnarray} 
	\label{EstIntegree-T+eta}
	\int_{-T}^{-T+\eta} E_s(t) dt 
	&\leq & \frac{M}{s} E_s(-T+ \eta) 
		+ \frac{M}{s} \int_{-T}^T  \int_\Omega e^{2s\varphi}  |\square  z |^2 \,dxdt,
	\\ 
	\label{orange} 
	 E_s(-T+\eta)
	 &\leq& M s \int_{-T+\eta}^{T-\eta }E_s(t)dt
	+ \frac{M}{s}\int_{-T}^{T}\int_\Omega e^{2s\varphi} | \square z |^2\,dxdt.
\end{eqnarray}
Combining \eqref{EstIntegree-T+eta} with \eqref{orange}, we deduce
\begin{multline}\label{firstterm}
	\int_{-T}^{-T+\eta} \int_{\Omega}  e^{2s\varphi}\left( |\partial_t z|^2 + | \nabla z|^2 \right)dxdt \\
		\leq M \int_{-T+\eta}^{T-\eta}\int_{\Omega} e^{2s\varphi}\left(|\partial_t z|^2 + |\nabla z |^2 \right)dxdt + \frac{M}{s}\int_{-T}^T \int_\Omega e^{2s\varphi} |\square z|^2\,dxdt .
\end{multline}
Besides, similarly to \eqref{EstimeeEnergieT}, using Lemma \ref{Poincare}, we have
\begin{multline}
	\label{EstimeeEnergie-T}
	s \int_{-T}^{-T+\eta} \int_\Omega e^{2s\varphi}\left(|\partial_t z|^2 + |\nabla z |^2  + s^2|z|^2\right)dxdt\\
	\leq M s \int_{-T+\eta}^{T-\eta }E_s(t)dt
	+ M \int_{-T}^{T}\int_\Omega e^{2s\varphi} | \square z |^2\,dxdt.
\end{multline}

\noindent \textbf{Step 3: Conclusion}

Using the power of $s$ in the left hand side of (\ref{eq3}) and estimates \eqref{firstterm1} and \eqref{firstterm}, taking $s$ large enough, we obtain
\begin{multline*}
		s\int_{-T+\eta}^{T-\eta}\int_{\Omega} e^{2s\varphi}\left(|\partial_t z|^2 + |\nabla z |^2  + s^2|z|^2\right)dxdt		
		\\
		\leq M\int_{-T}^T \int_\Omega e^{2s\varphi} |\square z|^2 \,dxdt 
		+Ms\int_{-T}^T \int_{\Gamma_0} e^{2s\varphi} |\partial_\nu z|^2\,d\sigma dt.
\end{multline*}
Using then estimates \eqref{EstimeeEnergieT} and \eqref{EstimeeEnergie-T}, we immediately deduce \eqref{Carlemgral}.
\end{proof}

\begin{remark}\label{RemarkChi}
	Note that, following carefully the above proof, the Carleman estimate \eqref{Carlemgral} can actually be slightly improved into
	\begin{equation}
	\begin{aligned}\label{eq2-bis}	
		s\int_{-T}^T \int_\Omega &e^{2s\varphi}\left( |\partial_t z|^2 + |\nabla z|^2 \right)\,dxdt
		+s^3\int_{-T}^T \int_\Omega e^{2s\varphi} |z|^2\,dxdt\\ 
		&\leq M \int_{-T}^T \int_\Omega e^{2s\varphi} |\square  z|^2 \,dxdt
		+M s\int_{-T}^T \int_{\Gamma_0} e^{2s\varphi} \chi(t)^2 |\partial_\nu z|^2\,d\sigma dt, 
	\end{aligned}
	\end{equation}
	where $\chi$ is the cut-off function defined in \eqref{chi}.
\end{remark}

\subsection{A Carleman estimate with pointwise term in time $-T$}\label{Sec2.4}
The proof of Theorem \ref{Carlemangral} easily gives furthermore an additional weighted estimate of the solution at time $-T$:

\begin{corollary}\label{CarlemanT}
Under the conditions of Theorem \ref{Carlemangral}, we also have
\begin{multline}\label{CarlemT}	
		s  \int_{\Omega} e^{2s\varphi(-T)}\left(|\partial_t z(-T)|^2 + |\nabla z (-T)|^2\right)dx
		+ s^3 \int_{\Omega} e^{2s\varphi(-T)}|z(-T)|^2\,dx \\
		\leq M\int_{-T}^{T} \int_{\Omega} e^{2s\varphi}| \square z|^2\,dxdt 
		+ Ms \int_{-T}^{T} \int_{\Gamma_{0}} e^{2s\varphi} \left|\partial_\nu z\right|^2\,d\sigma dt ,
\end{multline}
for all $z\in L^2((-T,T);H_0^1(\Omega))$ satisfying $\square z \in L^2(\Omega\times (-T,T))$ and $\partial_\nu z \in L^2(\partial\Omega\times (-T,T))$.
\end{corollary}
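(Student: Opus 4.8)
The plan is to extract the pointwise bound at $t=-T$ directly from the weighted-energy estimates already established in the proof of Theorem~\ref{Carlemangral}, and then to convert the gradient term at $t=-T$ into the zeroth-order term via the weighted Poincar\'e inequality of Lemma~\ref{Poincare}. The only genuinely new manipulation is to evaluate the Gronwall estimate \eqref{rose} at the closed endpoint $t=-T$ instead of integrating it over $(-T,-T+\eta)$; the crucial point is that the exponential decay factor it carries absorbs the extra power of $s$ that appears when one passes from an integrated-in-time quantity to a pointwise one.

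Keeping the weighted energy $E_s(t) = \frac{1}{2}\int_\Omega e^{2s\varphi(t)}\big(|\partial_t z(t)|^2 + |\nabla z(t)|^2\big)\,dx$ of the proof of Theorem~\ref{Carlemangral}, I would first let $t\downarrow -T$ in \eqref{rose} to get
$$E_s(-T) \leq E_s(-T+\eta)\,e^{-sc_*\eta} + \frac{C}{s}\int_{-T}^{-T+\eta}\int_\Omega e^{2s\varphi}\,|\square z|^2\,dx\,dt.$$
Next I would bound $E_s(-T+\eta)$ exactly as in the main proof: inequality \eqref{orange} together with $s\int_{-T+\eta}^{T-\eta}E_s(t)\,dt \leq \frac{s}{2}\int_{-T}^{T}\int_\Omega e^{2s\varphi}(|\partial_t z|^2+|\nabla z|^2)\,dx\,dt$, the latter being dominated by the left-hand side, hence by the right-hand side, of the already-proven estimate \eqref{Carlemgral}, yields
$$E_s(-T+\eta) \leq M\int_{-T}^{T}\int_\Omega e^{2s\varphi}|\square z|^2\,dx\,dt + Ms\int_{-T}^{T}\int_{\Gamma_0} e^{2s\varphi}|\partial_\nu z|^2\,d\sigma\,dt.$$
Multiplying the first display by $s$ and inserting the second, the term $s\cdot\frac{C}{s}\int\int e^{2s\varphi}|\square z|^2$ is already of the desired form, while the remaining contribution is $s\,e^{-sc_*\eta}E_s(-T+\eta)$; since $s^k e^{-sc_*\eta}$ is bounded on $s\geq s_0$ for every fixed $k$, the factors $s\,e^{-sc_*\eta}$ and $s^2 e^{-sc_*\eta}$ stay bounded and give $sE_s(-T) \leq M\int\int e^{2s\varphi}|\square z|^2 + Ms\int\int_{\Gamma_0}e^{2s\varphi}|\partial_\nu z|^2$, which is precisely the first line of \eqref{CarlemT}.

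For the zeroth-order term I would apply Lemma~\ref{Poincare} to $z(-T)\in H_0^1(\Omega)$ with the frozen weight $\tilde\varphi = \varphi(\cdot,-T)$, which satisfies \eqref{hyp1} since $\nabla\varphi(\cdot,-T) = 2\lambda(x-x_0)\varphi(\cdot,-T)$ and $x_0\notin\overline\Omega$. This gives $s^2\int_\Omega e^{2s\varphi(-T)}|z(-T)|^2\,dx \leq M\int_\Omega e^{2s\varphi(-T)}|\nabla z(-T)|^2\,dx \leq 2M\,E_s(-T)$, so that multiplying by $s$ and using the bound just obtained for $sE_s(-T)$ produces the $s^3$-term and completes \eqref{CarlemT}. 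I expect the only real obstacle to be the justification that the traces $z(-T),\ \partial_t z(-T)$ are well defined and that \eqref{rose} may be evaluated at the closed endpoint: this rests on the continuity in time of $E_s$, which holds because the estimate \eqref{Carlemgral} places $z$ a posteriori in $C([-T,T];H^1_0(\Omega))\cap C^1([-T,T];L^2(\Omega))$, and is otherwise handled by the same density argument used for smooth $z$ in the proof of Theorem~\ref{CarlemanClas}.
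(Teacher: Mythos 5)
Your proof is correct and follows essentially the same route as the paper's (two-sentence) proof: evaluate \eqref{rose} at $t=-T$, control $E_s(-T+\eta)$ via \eqref{orange} together with the already-established estimate \eqref{Carlemgral}, and obtain the zeroth-order term from the weighted Poincar\'e inequality of Lemma~\ref{Poincare} applied to $z(-T)$ with the frozen weight $\varphi(\cdot,-T)$. You supply more detail than the paper does --- in particular the observation that the factor $e^{-sc_*\eta}$ is what absorbs the extra powers of $s$, and the remark on the continuity of $E_s$ needed to evaluate \eqref{rose} at the endpoint --- but the strategy is identical.
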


\begin{proof}
Using inequalities \eqref{rose} at $t=-T$ and \eqref{orange}, we obtain an estimate on $E_s(-T)$. We then deduce \eqref{CarlemT} from the weighted Poincar\'e inequality of Lemma \ref{Poincare} and Theorem~\ref{Carlemangral}.
\end{proof}

Let us now conclude with the proof of Theorem~\ref{ThmCarleman} that will be our main tool for the study in Section~\ref{SecCont} on the design of a constructive process for building controls that depend weakly on the potentials.
\begin{proof}[Proof of Theorem~\ref{ThmCarleman}]
The Carleman estimate of Theorem~\ref{ThmCarleman} for the operator $\square + p$ with $p\in L^\infty_{\leq m} (\Omega \times (-T,T))$ is a direct consequence of Theorem \ref{Carlemangral} and Corollary \ref{CarlemanT} noticing that in $\Omega\times(-T,T)$,
$$
	|\square z|^2 \leq 2 |\square z+ pz|^2 + 2 \|p\|_{L^\infty (\Omega \times (-T,T))}^2|z|^2 
	\leq 2 |\square z+ pz|^2 + 2 m^2 |z|^2.
$$
Then choosing $s_0$ large enough, one can absorb the term
$$
	2 M m^2 \int_{-T}^{T} \int_{\Omega} e^{2s\varphi}|z|^2\,dxdt
$$
by the left hand side of the sum of \eqref{Carlemgral} and \eqref{CarlemT}, thus obtaining \eqref{CarlemTp} with slightly different constants.
\end{proof}

\begin{remark}\label{RemarkChi-2}
	Using Remark \ref{RemarkChi}, one can easily adapt the above proof to estimate the left hand-side of \eqref{CarlemTp} by
	\begin{equation}
	\label{RHS-bis}	
		M \int_{-T}^T \int_\Omega e^{2s\varphi} |\square  z + pz|^2 \,dxdt
		+M s\int_{-T}^T \int_{\Gamma_0} e^{2s\varphi} \chi(t)^2 |\partial_\nu z|^2\,d\sigma dt, 
	\end{equation}
	where $\chi$ is the cut-off function defined in \eqref{chi}.
\end{remark}

\subsection{A Carleman estimate with pointwise term in time $0$}
We are now interested in deriving a Carleman-type estimate with $\partial_t z(\cdot,0)$ in the left-hand side under the condition that $z(x,0) = 0$ for all $x\in\Omega$.
We aim at proving Theorem~\ref{ThmCarleman-t=0} at the end.

\begin{theorem}\label{Carleman0}
Assume the multiplier condition \eqref{GCC-multiplier} and the time condition  \eqref{GCC-Time}. Define the weight functions $\varphi$ as in \eqref{poids} with $\beta \in (0,1)$ being such that \eqref{GCC-Time-Beta} holds.

Then there exist $s_{0}>0$, $\lambda>0$ and a positive constant $M$ 
such that for all $s\geq s_{0}$:
\begin{gather}
	s^{1/2} \int_{\Omega} e^{2s\varphi(0)}|\partial_t z(0)|^2 \,dx 
	+ s \int_{-T}^{T} \int_{\Omega} e^{2s\varphi}(|\partial_t z|^2 + |\nabla z |^2)\,dxdt 
	+ s^3 \int_{-T}^{T} \int_{\Omega} e^{2s\varphi}|z|^2\,dxdt \nonumber\\
	\leq M \int_{-T}^{T} \int_{\Omega} e^{2s\varphi}| \square z|^2\,dxdt 
	+ Ms \int_{-T}^{T} \int_{\Gamma_{0}} e^{2s\varphi} \left|\partial_\nu z\right|^2\,d\sigma dt\label{Carlem0}
\end{gather}
for all $z\in L^2((-T,T);H_0^1(\Omega))$ satisfying $\square z \in L^2(\Omega\times (-T,T))$, $\partial_\nu z \in L^2(\partial\Omega\times (-T,T))$ and $z(x,0) =0$ for all  $x\in\Omega$.
\end{theorem}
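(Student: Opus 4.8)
The plan is to read off the two bulk terms on the left of \eqref{Carlem0} directly from Theorem~\ref{Carlemangral}, and to produce the new pointwise contribution $s^{1/2}\int_\Omega e^{2s\varphi(0)}|\partial_t z(0)|^2\,dx$ by a separate energy argument localized near $t=0$. Two features make $t=0$ special: the hypothesis $z(\cdot,0)=0$ forces $\nabla z(\cdot,0)=0$ as well, and $\partial_t\varphi(\cdot,0)=0$ because $\partial_t\psi=-2\beta t$ vanishes at $t=0$.

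The decisive idea is to work with the conjugated variable $w=e^{s\varphi}z$ and to use the energy attached to the operator $P_1$ of \eqref{P1} rather than the physical energy $E_s$ of $z$. Writing $c=s^2\lambda^2\varphi^2(|\partial_t\psi|^2-|\nabla\psi|^2)$, so that $P_1w=\partial_t^2w-\Delta w+c\,w$, I would set
$$
\mathcal F(t)=\frac12\int_\Omega\left(|\partial_t w|^2+|\nabla w|^2+c\,|w|^2\right)dx.
$$
Since $z(\cdot,0)=0$ gives $w(\cdot,0)=0$ and $\nabla w(\cdot,0)=0$, while $\partial_t\varphi(\cdot,0)=0$ gives $\partial_t w(\cdot,0)=e^{s\varphi(0)}\partial_t z(\cdot,0)$, one has $\mathcal F(0)=\tfrac12\int_\Omega e^{2s\varphi(0)}|\partial_t z(0)|^2\,dx$, exactly half of the target term. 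The virtue of this energy is that $P_1$ has no first-order part, so differentiating in time and integrating by parts (using $w=\partial_t w=0$ on $\partial\Omega$) yields the clean identity
$$
\mathcal F'(t)=\int_\Omega \partial_t w\,P_1w\,dx+\frac12\int_\Omega(\partial_t c)\,|w|^2\,dx,
$$
in which no analogue of the cross term $s\int e^{2s\varphi}\partial_t z\,\nabla\varphi\cdot\nabla z$ survives; that term, which appears if one differentiates $E_s$, is of order $s$ and cannot be absorbed, and is the reason I avoid the physical energy.

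I would then fix a small $\delta\in(0,T-\eta)$, so that $(0,\delta)$ lies where the cut-off $\chi$ of \eqref{chi} equals $1$, and recover $\mathcal F(0)$ through the averaged identity $\mathcal F(0)=\frac1\delta\int_0^\delta\mathcal F(\tau)\,d\tau-\frac1\delta\int_0^\delta\int_0^\tau\mathcal F'(t)\,dt\,d\tau$. The estimate rests on three inputs with explicit powers of $s$: from Theorem~\ref{Carlemangral}, $\int\!\int e^{2s\varphi}(|\partial_t z|^2+|\nabla z|^2)\le s^{-1}\mathrm{RHS}$ and $\int\!\int e^{2s\varphi}|z|^2\le s^{-3}\mathrm{RHS}$, where $\mathrm{RHS}$ is the right-hand side of \eqref{Carlem0}; the pointwise bound $|\partial_t c|\le M s^2|t|$, again from $\partial_t\psi=-2\beta t$; and the control $\int_0^\delta\!\int_\Omega|P_1w|^2\le M\,\mathrm{RHS}$. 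Since $\partial_t w=e^{s\varphi}(\partial_t z+s\,\partial_t\varphi\,z)$, these give $\int_0^\delta\!\int_\Omega(|\partial_t w|^2+|\nabla w|^2)\le M s^{-1}\mathrm{RHS}$, whence the averaged term is $O(s^{-1}\mathrm{RHS})$; in $\mathcal F'$ the weight term is $O(s^{-1}\mathrm{RHS})$ by the bound on $\partial_t c$, while the decisive term obeys, by Cauchy--Schwarz, $\int_0^\delta\!\int_\Omega\partial_t w\,P_1w\le\big(\int_0^\delta\!\int|\partial_t w|^2\big)^{1/2}\big(\int_0^\delta\!\int|P_1w|^2\big)^{1/2}\le M s^{-1/2}\mathrm{RHS}$. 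Collecting terms gives $\mathcal F(0)\le M s^{-1/2}\mathrm{RHS}$, i.e. $s^{1/2}\int_\Omega e^{2s\varphi(0)}|\partial_t z(0)|^2\le M\,\mathrm{RHS}$, and adding this to \eqref{Carlemgral} proves \eqref{Carlem0}. It is exactly this Cauchy--Schwarz step—pairing $\partial_t w$, of size $s^{-1/2}$ in $L^2$, against $P_1w$, of size $O(1)$—that explains the loss from the power $s$ in the bulk terms to the power $s^{1/2}$ in the pointwise term.

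The step I expect to need the most care is the control of $\int_0^\delta\int_\Omega|P_1w|^2$. Corollary~\ref{CorCarlemanClas} supplies $\int\!\int|P_1(e^{s\varphi}v)|^2$ only for functions vanishing together with their time derivative at $t=\pm T$, which $z$ need not satisfy. I would therefore apply it to $v=\chi z$: since $\chi\equiv1$ and $\chi'=\chi''=0$ on $(-T+\eta,T-\eta)\supset(0,\delta)$, one has $P_1(e^{s\varphi}\chi z)=P_1w$ there, and the right-hand side it produces, through $\square(\chi z)=\chi\square z-2\chi'\partial_t z-\chi''z$ and $\chi\,\partial_\nu z$, is absorbed into $\mathrm{RHS}$ precisely by the energy estimates on the end intervals $(T-\eta,T)$ and $(-T,-T+\eta)$ already proved in Theorem~\ref{Carlemangral}. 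Once this localization is secured, the remaining work is the bookkeeping of the powers of $s$ sketched above, which is routine.
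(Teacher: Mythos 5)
Your proposal is correct and follows essentially the same route as the paper: the paper also passes to the conjugated variable, pairs $P_1 w$ with $\partial_t w$ (exploiting that $P_1$ has no first-order terms so that this pairing produces the $P_1$-energy at $t=0$ plus a term controlled by $s^{2}\iint |w|^2$), and controls $\iint |P_1 w|^2$, $\iint|\partial_t w|^2$ and $\iint |w|^2$ by the conjugated Carleman estimate \eqref{CarlemW} applied to $\chi z$, with the cut-off contributions absorbed by the energy estimates of Theorem~\ref{Carlemangral}. The only cosmetic differences are that the paper integrates $\int_{-T}^{0}\int_\Omega P_1w\,\partial_t w$ directly (using $w\equiv 0$ near $t=-T$ thanks to the cut-off) and applies a weighted Young inequality, rather than your averaged fundamental-theorem identity on $(0,\delta)$ with plain Cauchy--Schwarz; both yield the same $s^{1/2}$ power.
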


\begin{remark} 
 Let us emphasize the assumption $z(\cdot, 0) = 0$ in $\Omega$. Without this condition, energy estimates based on (2.24) only yield 
	$$
		E_s(0) \leq M \int_{-T}^{-T} \int_\Omega e^{2 s \varphi} |\Box z|^2 \, dx  dt + Ms \int_{-T}^T \int_{\Gamma_0} e^{2 s \varphi} |\partial_\nu z|^2 \, d \sigma d t, 
	$$
	which is not enough to our purpose, see Section 4.
\end{remark}

\begin{proof}
We consider a function $z\in L^2((-T,T);H^1_0(\Omega))$ such that $z(x,0) =0$ for all  $x\in\Omega$. We use the notations previously introduced by \eqref{P1} and \eqref{chi} and set 
$$
	w=e^{s\varphi} \chi z \quad \tn{ and } \quad P_{1}w=\partial_t^2w-\Delta w+s^2\lambda^2\varphi^2w(|\partial_t\psi|^2-|\nabla\psi|^2).
$$

Since we are interested only in the dependence of $s$, as before, all the powers of $\lambda$ and the functions $\varphi$ can be omitted and enter into the constants.

Under the condition $z(\cdot,0) = 0$ in $\Omega$, we get $w(x,0) = 0$ for all $x\in\Omega$. This allows us to do the following computations
\begin{eqnarray*}
	\int_{-T}^0\int_{\Omega} P_1w\,\partial_tw\,dxdt 
	& = & \int_{-T}^0\int_{\Omega} (\partial_t^2w-\Delta w+s^2\lambda^2\varphi^2w(|\partial_t\psi|^2-|\nabla\psi|^2))\,\partial_t w\,dxdt \\
	& = & \dfrac 12\int_{\Omega}  |\partial_tw(0)|^2  \,dx
	- \dfrac {s^2\lambda^2}2  \int_{-T}^0\int_{\Omega}  |w|^2\partial_t\left(\varphi^2(|\partial_t^2\psi|^2-|\nabla\psi|^2)\right)\,dxdt\\
	& \geq &\dfrac 12\int_{\Omega} |\partial_tw(0)|^2  \,dx  - C_\lambda s^2  \int_{-T}^0\int_{\Omega}   |w|^2 \,dxdt.
\end{eqnarray*}
implying in particular, by Cauchy-Schwarz, that
$$	
	s^{1/2} \int_{\Omega} |\partial_tw(0)|^2  \,dx 
		\leq	\int_{-T}^T\int_{\Omega} |P_1w|^2\,dxdt 
		+s \int_{-T}^T\int_{\Omega} |\partial_tw|^2\,dxdt
		+C_\lambda s^{5/2}   \int_{-T}^T\int_{\Omega}   |w|^2 \,dxdt.
$$

Moreover, $v  = we^{-s \varphi} = \chi z$ satisfies the assumption of Theorem \ref{CarlemanClas}. Therefore we can use estimate \eqref{CarlemW} on $w$ and, bounding each $\varphi^k$ from above and from below, we get:
\begin{gather}
	s^{1/2} \int_{\Omega} |\partial_tw(0)|^2  \,dx 
	+ s\int_{-T}^{T} \int_{\Omega} (| \partial_t w|^2+|\nabla w|^2)\,dxdt
	+ s^3 \int_{-T}^{T} \int_{\Omega}  |w|^2\,dxdt
	\nonumber\\
	+\int_{-T}^{T} \int_{\Omega}|P_1 w|^2\,dxdt+\int_{-T}^{T} \int_{\Omega}|P_2 w|^2\,dxdt\label{CarlemW-0}\\
	\leq M\int_{-T}^{T} \int_{\Omega}| Pw|^2\,dxdt 
	+Ms \int_{-T}^{T} \int_{\Gamma_0} \left|\partial_\nu w \right|^2  \,d\sigma dt. \nonumber
\end{gather}
Now, arguing as in Step 3 of the proof of Theorem \ref{CarlemanClas}, we obtain
\begin{gather*}
	s^{1/2} \int_{\Omega} e^{2s\varphi(0)}|\partial_t z(0)|^2 \,dx 
	+ s \int_{-T}^{T} \int_{\Omega} e^{2s\varphi}(|\partial_t (\chi z)|^2 + |\nabla (\chi z) |^2)\,dxdt 
	+ s^3 \int_{-T}^{T} \int_{\Omega} e^{2s\varphi}\chi^2 |z|^2\,dxdt \nonumber\\
	\leq M \int_{-T}^{T} \int_{\Omega} e^{2s\varphi}| \square (\chi z)|^2\,dxdt 
	+ Ms \int_{-T}^{T} \int_{\Gamma_{0}} e^{2s\varphi} \chi^2 \left|\partial_\nu z\right|^2\,d\sigma dt.
\end{gather*}
We then use energy estimates as in Theorem \ref{Carlemangral} to deduce \eqref{Carlem0}.
\end{proof}

Let us now conclude with the proof of Theorem~\ref{ThmCarleman-t=0}, main tool to study the inverse problem in Section~\ref{SecInversePb}.
\begin{proof}[Proof of Theorem~\ref{ThmCarleman-t=0}]
Since $ p\in L^\infty_{\leq m}(\Omega \times (-T,T))$, thanks to Theorem~\ref{Carleman0} and arguing as in the proof of Theorem \ref{ThmCarleman}, the potential in \eqref{Carlem0p} can be absorbed by taking $s$ large enough.

When the potential $q$ in the operator does not depend on time, one can extend the function $z$ by $z(\cdot, t) = z(\cdot, -t)$ for $t \in (-T, 0)$ and apply \eqref{Carlem0p} to this extended function $z$. Of course, since each term is odd or even, the integrals on $(-T,T)$ simply are twice the integrals on $(0,T)$, which concludes the proof of \eqref{CarlemT-0}.
\end{proof}

\begin{remark}\label{RemarkChi-3}
	Using Remark \ref{RemarkChi}, similarly as in Remark \ref{RemarkChi-2}, one can estimate the left hand side of \eqref{Carlem0p} by \eqref{RHS-bis}.
\end{remark}

\section{Application to a controllability problem}\label{SecCont}
In this section, our goal is to present what are the consequences of the Carleman estimate of Theorem \ref{ThmCarleman} with respect to the control properties of equation \eqref{EqYControl}.

In all this section, we shall assume that conditions \eqref{GCC-multiplier}--\eqref{GCC-Time} on $\Gamma_0, \,T$ hold. Then there exists $\beta \in (0,1)$ such that \eqref{GCC-Time-Beta} holds. We fix $\beta$ this way and take $\lambda$ large enough so that Theorem~\ref{ThmCarleman} applies.

\subsection{Setting}
Let us recall that the exact controllability problem under consideration is the one described in the introduction by \eqref{EqYControl}-\eqref{ExactControllability}. 
In order to solve that problem, following the duality technique introduced in \cite{FursikovImanuvilov} for parabolic equations (and that can be seen as an extension of the usual Hilbert Uniqueness Method \cite{Lions}), the idea is to minimize the functional $K_{s,p}$ defined by \eqref{K-functional}.\\

Before going further, let us take some time to describe the space on which $K_{s,p}$ is defined. In the introduction, for $p \in L^\infty(\Omega \times (-T,T))$,  we defined $K_{s,p}$ on 
\begin{multline}
	\label{Space-T-p}
	\mathcal{T}[p] = \Big\{ z \in L^2(-T,T; H^1_0(\Omega)), \hbox{ with }  (\partial_t ^2 - \Delta + p) z \in L^2(\Omega \times (-T,T))
	\\
	 \hbox{ and } \partial_\nu z \in L^2(\Gamma_0 \times (-T,T))\Big\}. 
\end{multline}
Note that this is a space of trajectories of the wave operator with potential $ (\partial_t ^2 - \Delta + p) $ and therefore it {\it a priori} depends on $p$.
In order to study the functional $K_{s,p}$, natural semi-norms on $\mathcal{T}[p]$ are the following ones:
\begin{equation}
	\label{Norm-T-s-p}
	\| z\|^2_{\textnormal{obs},s,p} = \frac{1}{s} \int_{-T}^{T}\int_\Omega e^{2s\varphi} |\partial_{t}^2 z- \Delta z + pz|^2 \,dxdt
	+\int_{-T}^{T}\int_{\Gamma_0} e^{2s\varphi}|\partial_\nu z|^2 \,d\sigma dt.
\end{equation}
Let us explain below that these quantities define norms for any parameter $s >0$ and potential $p \in L^\infty (\Omega \times (-T,T))$ and give some of their basic properties:

\begin{itemize}
\item  For all $s >0$ and $p \in L^\infty(\Omega \times (-T,T))$, and for all $z \in \mathcal T[p]$, the quantity $\| z \|_{\textnormal{obs},s,p}^2$ is equivalent to the quantity
	\begin{equation}
		\label{Norm-T-withoutweight}
		\| z\|^2_{\textnormal{obs},p} 
		=  \int_{-T}^{T}\int_\Omega |\partial_{t}^2 z - \Delta z + pz|^2 \,dxdt
		+ \int_{-T}^{T}\int_{\Gamma_0} |\partial_\nu z|^2\,d\sigma dt, 
	\end{equation}
	in the sense that there exists $C_s$ such that for all $z \in \mathcal T[p]$,
	\begin{equation}
		\label{EquivalenceNorms-ObsUsuel}
		\frac{1}{C_s} 	\| z\|_{\textnormal{obs},p}^2 \leq 	\| z\|^2_{\textnormal{obs},s,p} \leq C_s 	\| z\|^2_{\textnormal{obs},p}.
	\end{equation}
	This is a consequence of the fact that the weight function $e^{2s\varphi}$ is bounded from below and from above by positive constants depending on $s$.
	
\item  For $p \in L^\infty(\Omega \times (-T,T))$ with $s \geq s_0 (\| p\|_{L^\infty(\Omega \times (-T,T))})$ given by Theorem \ref{ThmCarleman},
 the quantity defined in \eqref{Norm-T-s-p} is indeed a norm: the Carleman estimate \eqref{CarlemTp} shows that if 
 $\| z\|_{\textnormal{obs},s,p}  = 0$, 
 then $z \equiv 0$ and that $	\| z\|_{\textnormal{obs},s,p} $ measures the $L^2(-T,T;H^1_0(\Omega))$-norm of $z$.
This proves that $\| \cdot\|_{\textnormal{obs},s,p} $  is a norm on $\mathcal{T}[p]$ 
for all $s \geq s_0 (\| p\|_{L^\infty(\Omega \times (-T,T))})$. According to the first item, this is actually true for all $s >0$.
	
\item $\mathcal{T}[p] = \mathcal{T}[0]$ for all $p \in L^\infty(\Omega \times (-T,T))$. This is due to the fact that $p z \in L^2(\Omega \times (-T,T))$  when $p \in L^\infty(\Omega \times (-T,T))$ and $z \in L^2(-T,T; H^1_0(\Omega))$.

For convenience, we shall now denote $\mathcal{T}[0]$ simply by $\mathcal{T}$, where
\begin{multline}
	\label{Space-T}
	\mathcal{T} = \Big\{ z \in L^2(-T,T; H^1_0(\Omega)), \hbox{ with }  (\partial_t ^2 - \Delta ) z \in L^2(\Omega \times (-T,T))
	\\
	 \hbox{ and } \partial_\nu z \in L^2(\Gamma_0 \times (-T,T))\Big\}. 
\end{multline}
	
\item  For all $m>0$, there exist constants $M(m)>0$ and $s_0(m) >0$ independent of $s$ and $p$ such that for all $s\geq s_0(m)$ and $p^a, p^b \in L^\infty_{\leq m} (\Omega \times (-T,T))$, 
	\begin{equation}
		\label{EquivalentNorms-T-s-p}
		\frac{1}{M} \| z\|_{\textnormal{obs},s,p^b}\leq	\| z\|_{\textnormal{obs},s,p^a}  \leq M	\| z\|_{\textnormal{obs},s,p^b}.
	\end{equation}
	This result follows immediately from Theorem \ref{ThmCarleman} and its proof.
	Note that in \eqref{EquivalentNorms-T-s-p}, these equivalences of norms are proven uniformly with respect to $s\geq s_0(m)$ for potentials lying in $ L^\infty_{\leq m} (\Omega \times (-T,T))$. This is an important remark.\\
\end{itemize}

In the following, we will denote by $\left( \mathcal{T}, \| \cdot \|_{\textnormal{obs},s,p} \right)$ the space $\mathcal{T}$ endowed with the norm $\| \cdot \|_{\textnormal{obs},s,p}$. 
But we drive the attention of the reader to the fact that these norms $\| \cdot \|_{\textnormal{obs},s,p}$ are not uniformly equivalent with respect to $s>0$.\\

We now introduce the functional space $ H^1_0(\Omega) \times L^2(\Omega)  $
on which we consider the family of norms, 
$$
	\|(z_0, z_1) \|_{-T, s}^2 =  \int_{\Omega} e^{2s\varphi(-T)}\left( |\nabla z_0|^2+|z_1|^2\right) dx,\quad \tn{ for }s >0.
$$
Again, several remarks can be done:

\begin{itemize}
\item Using the weighted Poincar\'e inequality of Lemma \ref{Poincare}, $\|\cdot \|_{-T, s}$ is equivalent, uniformly in $s>0$, to 
	$$
	 \left(\int_{\Omega} e^{2s\varphi(-T)}\left( |\nabla z_0|^2+|z_1|^2\right)dx
		+ s^2 \int_{\Omega} e^{2s\varphi(-T)}|z_0|^2\,dx\right)^{1/2}.
	$$

\item According to Theorem \ref{ThmCarleman}, for all $z \in \mathcal{T}$, $(z(-T), \partial_t z(-T))$ belongs to $H^1_0(\Omega) \times L^2(\Omega) $ and for all $m>0$, there exists a constant $C$ independent of $p$ and $s$ such that for all $p \in L^\infty_{\leq m} (\Omega \times (-T,T))$ and $s \geq s_0(m)$, 
\begin{equation}
	\label{EstCarlemanForCoercivity}
	\|(z(-T), \partial_t z(-T)) \|_{-T, s} \leq C \| z\|_{\textnormal{obs},s,p}, \quad z \in \mathcal{T}.
\end{equation}	
\end{itemize}

Note that, anyway, for all $s >0$ and $p \in L^\infty(\Omega \times (-T,T))$, one easily checks that, bounding the functions depending on $s$ if needed, there exists $C(s,p)$ such that 
\begin{equation}
	\label{EstCarlemanForCoercivity-weak}
		\|(z(-T), \partial_t z(-T)) \|_{-T, s} \leq C(s,p) \| z\|_{\textnormal{obs},s,p}, \quad z \in \mathcal{T}.
\end{equation}

Finally, we also introduce the dual space $L^2(\Omega) \times H^{-1}(\Omega)$ on which we consider the family of norms
\begin{equation}
	\label{NormeDuale}
	\|(y_0, y_1) \|_{-T, s, *}^2 =  \int_{\Omega} e^{-2 s \varphi(-T)}\left(|y_0|^2 + \left|\nabla (-\Delta_d)^{-1} y_1\right|^2\right)dx.
\end{equation}

\subsection{Construction of a null-controlled trajectory}

\begin{proposition}
	\label{PropControle}
	Assume the multiplier condition \eqref{GCC-multiplier} and the time condition \eqref{GCC-Time-Beta}.
	
	Then, for all $s >0$ and $p \in L^\infty(\Omega \times (-T,T))$, the functional $K_{s,p}$ defined in \eqref{K-functional}, that we recall below for the convenience of the reader,
	\begin{multline*}
	K_{s,p} (z)= \frac{1}{2s}\int^T_{-T}\int_\Omega e^{2s\varphi} | \partial_t^2 z-\Delta z+p z|^2\,dxdt +\frac{1}{2}\int^T_{-T}\int_{\Gamma_0}e^{2s\varphi}  |\partial_\nu z|^2 \,d\sigma dt \\
			+ \langle (y_0^{-T},y_1^{-T}),(z(-T),\partial_t z(-T))\rangle_{(L^2\times H^{-1})\times(H^1_0\times L^2)}, 
	\end{multline*}
	 is continuous, strictly convex and coercive on $\left( \mathcal{T}, \| \cdot \|_{\textnormal{obs},s,p}\right)$ for initial data
	$(y_0^{-T},y_1^{-T})$ in $\left( L^2(\Omega) \times H^{-1}(\Omega),\|(\cdot, \cdot) \|_{-T, s, *}\right)$ 
	and therefore admits a unique minimizer $Z[s,p] \in \mathcal{T}$. 
	
	Setting 
$$
	Y[s,p] = \frac{1}{s}e^{2 s \varphi} (\partial_t^2 - \Delta +p) Z[s,p] \quad \text{ and } \quad U[s,p] = e^{2 s \varphi} \partial_\nu Z[s,p] \fcar_{\Gamma_0}
$$ 
as in \eqref{LinkControlPb-Adjoint}, $Y[s,p]$ solves \eqref{EqYControl} with control $U[s,p]$ and satisfies the control requirement \eqref{ExactControllability}.
	 
	 Besides, for all $m >0$, there exists a constant $M>0$ independent of $s$ and $p$ such that for all $p \in L^\infty_{\leq m}(\Omega \times (-T,T))$, $s \geq s_0(m)$, for all data $(y_0^{-T}, y_1^{-T}) \in L^2(\Omega) \times H^{-1}(\Omega)$, the minimizer $Z[s,p]$ of $K_{s,p}$ satisfies:
	 \begin{multline}
	\| Z[s,p] \|_{\textnormal{obs},s,p}^2 =  
	s\ds \int_{-T}^{T} \int_\Omega e^{-2s\varphi}|Y[s,p]|^2 \,dxdt
	+  
	\int_{-T}^{T} \int_{\Gamma_0} e^{-2s\varphi} |U[s,p] |^2\,dxdt 
	\\
	\leq M \|(y_0^{-T}, y_1^{-T}) \|_{-T, s, *}^2.
	\label{EstControlTraj}
	\end{multline}
\end{proposition}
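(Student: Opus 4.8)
The plan is to recognize $K_{s,p}$ as the sum of a quadratic form and a continuous linear form on the Hilbert space $\left(\mathcal{T}, \|\cdot\|_{\textnormal{obs},s,p}\right)$, and then to invoke the Riesz representation theorem. First I would observe that, by the very definition \eqref{Norm-T-s-p} of $\|\cdot\|_{\textnormal{obs},s,p}$, the first two terms of $K_{s,p}$ in \eqref{K-functional} equal $\frac{1}{2}\|z\|^2_{\textnormal{obs},s,p}$, while the last term is a linear form $\ell(z) = \langle(y_0^{-T}, y_1^{-T}), (z(-T),\partial_t z(-T))\rangle$. By Cauchy--Schwarz for the duality pairing between $\left(L^2(\Omega)\times H^{-1}(\Omega), \|\cdot\|_{-T,s,*}\right)$ and $\left(H^1_0(\Omega)\times L^2(\Omega), \|\cdot\|_{-T,s}\right)$ together with the trace estimate \eqref{EstCarlemanForCoercivity-weak}, one has $|\ell(z)| \le C(s,p)\|(y_0^{-T}, y_1^{-T})\|_{-T,s,*}\,\|z\|_{\textnormal{obs},s,p}$, so $\ell$ is continuous and hence $K_{s,p}$ is continuous. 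Strict convexity is immediate since $z \mapsto \frac{1}{2}\|z\|^2_{\textnormal{obs},s,p}$ is strictly convex and $\ell$ is affine.

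Before applying Riesz I must check that $\left(\mathcal{T}, \|\cdot\|_{\textnormal{obs},s,p}\right)$ is complete. Here I would use the Carleman estimate of Theorem~\ref{ThmCarleman}: given a sequence $(z_n)$ that is Cauchy for $\|\cdot\|_{\textnormal{obs},s,p}$, the images $\square z_n + p z_n$ and $\partial_\nu z_n|_{\Gamma_0}$ converge in their respective $L^2$ spaces, and \eqref{CarlemTp} forces $(z_n)$ to be Cauchy in the weighted $L^2(-T,T;H^1_0(\Omega))$-norm, hence (for fixed $s$, the weights being bounded above and below) in $L^2(-T,T;H^1_0(\Omega))$. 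Passing to the limit in the sense of distributions identifies the limit as an element of $\mathcal{T}$, so completeness holds; since all the norms $\|\cdot\|_{\textnormal{obs},s,p}$, $s>0$, are equivalent by \eqref{EquivalenceNorms-ObsUsuel}, this is true for every $s>0$. Coercivity then follows from $K_{s,p}(z) \ge \frac{1}{2}\|z\|^2_{\textnormal{obs},s,p} - C(s,p)\|(y_0^{-T}, y_1^{-T})\|_{-T,s,*}\,\|z\|_{\textnormal{obs},s,p}$, which tends to $+\infty$ as $\|z\|_{\textnormal{obs},s,p}\to\infty$. By the Riesz representation theorem applied to this quadratic-plus-linear functional, $K_{s,p}$ admits a unique minimizer $Z[s,p]\in\mathcal{T}$, characterized by $\langle Z[s,p],\tilde z\rangle_{\textnormal{obs},s,p} = -\ell(\tilde z)$ for all $\tilde z \in \mathcal{T}$.

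The hard part will be to show that $Y[s,p]$ and $U[s,p]$ defined in \eqref{LinkControlPb-Adjoint} solve the control problem \eqref{EqYControl}--\eqref{ExactControllability}. The Euler--Lagrange equation for $Z = Z[s,p]$ reads, for every $\tilde z\in\mathcal{T}$,
\begin{equation*}
\frac{1}{s}\int_{-T}^{T} \int_\Omega e^{2s\varphi}(\square Z + pZ)(\square \tilde z + p\tilde z)\,dxdt + \int_{-T}^{T} \int_{\Gamma_0} e^{2s\varphi}\partial_\nu Z\,\partial_\nu \tilde z\,d\sigma dt + \ell(\tilde z) = 0.
\end{equation*}
Substituting the definitions of $Y$ and $U$ turns this into
\begin{equation*}
\int_{-T}^{T} \int_\Omega Y(\square \tilde z + p\tilde z)\,dxdt + \int_{-T}^{T} \int_{\Gamma_0} U\,\partial_\nu \tilde z\,d\sigma dt + \langle(y_0^{-T}, y_1^{-T}),(\tilde z(-T),\partial_t \tilde z(-T))\rangle = 0,
\end{equation*}
which I would identify with the transposition (ultra-weak) formulation of \eqref{EqYControl} supplemented by the null final conditions \eqref{ExactControllability}. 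I would make this rigorous by the transposition method: for smooth $\tilde z$ one integrates by parts in space and time, the Dirichlet condition $\tilde z|_{\partial\Omega}=0$ producing the boundary term $\int_{\Gamma_0} U\,\partial_\nu \tilde z$, and the time-boundary terms encoding both the initial data $(y_0^{-T},y_1^{-T})$ at $t=-T$ and the vanishing of $(Y,\partial_t Y)$ at $t=T$; density of smooth trajectories in $\mathcal{T}$ then yields that $Y\in C([-T,T];L^2(\Omega))\cap C^1([-T,T];H^{-1}(\Omega))$ is the solution by transposition with control $U$ satisfying \eqref{ExactControllability}. This identification, together with the well-posedness of the transposition problem, is where the argument requires the most care.

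Finally, the estimate \eqref{EstControlTraj} follows from the minimality $K_{s,p}(Z)\le K_{s,p}(0)=0$, which gives $\frac{1}{2}\|Z\|^2_{\textnormal{obs},s,p} \le -\ell(Z)$; bounding $-\ell(Z) \le C\|(y_0^{-T},y_1^{-T})\|_{-T,s,*}\,\|Z\|_{\textnormal{obs},s,p}$ via the uniform trace estimate \eqref{EstCarlemanForCoercivity} (valid for $s\ge s_0(m)$ and $p\in L^\infty_{\le m}(\Omega\times(-T,T))$) and dividing by $\|Z\|_{\textnormal{obs},s,p}$ yields $\|Z\|_{\textnormal{obs},s,p}\le M\|(y_0^{-T},y_1^{-T})\|_{-T,s,*}$. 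Rewriting $\|Z\|^2_{\textnormal{obs},s,p}$ through the substitutions $\square Z + pZ = s\, e^{-2s\varphi}Y$ and $\partial_\nu Z = e^{-2s\varphi}U$ recovers exactly the left-hand side of \eqref{EstControlTraj}, which completes the proof.
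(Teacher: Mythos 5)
Your proposal is correct and follows essentially the same route as the paper: continuity and coercivity from the trace estimate \eqref{EstCarlemanForCoercivity-weak} (and its uniform version \eqref{EstCarlemanForCoercivity} for the final bound), the Euler--Lagrange equation rewritten via \eqref{LinkControlPb-Adjoint} and identified with the transposition formulation of \eqref{EqYControl}--\eqref{ExactControllability}, and the estimate \eqref{EstControlTraj} from $K_{s,p}(Z)\leq K_{s,p}(0)=0$. The only addition is your explicit verification, via the Carleman estimate, that $\left(\mathcal{T},\|\cdot\|_{\textnormal{obs},s,p}\right)$ is complete --- a point the paper leaves implicit when asserting existence of the minimizer.
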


\begin{proof}
We fix $s >0$ and $p \in L^\infty(\Omega \times (-T,T))$.
For $(y_0^{-T}, y_1^{-T}) \in  L^2(\Omega) \times H^{-1}(\Omega)$, the functional $K_{s,p}$ is defined and continuous on 
$\left( \mathcal{T}, \| \cdot \|_{\textnormal{obs},s,p} \right)$ because of \eqref{EstCarlemanForCoercivity-weak}. Estimate \eqref{EstCarlemanForCoercivity-weak} also yields immediately the coercivity of $K_{s,p}$:
\begin{align*}
		K_{s,p}(z) &\geq \dfrac 12 \|z\|^2_{\textnormal{obs},s,p} - \|(z_0^{-T},z_1^{-T})\|_{-T,s} \|(y_0^{-T}, y_1^{-T}) \|_{-T, s, *}
		\\
		&
		\geq \dfrac 12 \|z\|^2_{\textnormal{obs},s,p} - C(s,p)  \|z\|_{\textnormal{obs},s,p}\|(y_0^{-T}, y_1^{-T}) \|_{-T, s, *}.
\end{align*}
Therefore $K_{s,p}$ has a unique minimizer $Z[s,p]$ on $ \left( \mathcal{T},\| \cdot \|_{\textnormal{obs},s,p}\right)$ .

Since $K_{s,p}(0)=0$, we have $K_{s,p}(Z)\leq 0$, which, according to the above inequality, implies
\begin{equation*}
	\|Z[s,p]\|_{\textnormal{obs},s,p} \leq 2 C(s,p) \|(y_0^{-T}, y_1^{-T}) \|_{-T, s, *}.
\end{equation*}

Besides, when for some $m>0$, we have $s \geq s_0(m)$ and $p \in L^\infty_{\leq m} (\Omega \times (-T,T))$, using \eqref{EstCarlemanForCoercivity} instead of \eqref{EstCarlemanForCoercivity-weak}, the above constant $C$ can be chosen independently of $s \geq s_0(m)$ and $p \in L^\infty_{\leq m} (\Omega \times (-T,T))$:
\begin{equation*}
	\|Z[s,p]\|_{\textnormal{obs},s,p} \leq 2 C \|(y_0^{-T}, y_1^{-T}) \|_{-T, s, *}, 
\end{equation*}
which is precisely \eqref{EstControlTraj}.\\

Let us now check that $Y[s,p]$ defined by \eqref{LinkControlPb-Adjoint} is a controlled trajectory of \eqref{EqYControl} with control function $U[s,p]$ that satisfies the control requirement \eqref{ExactControllability}.
In order to simplify the notations, until the end of the proof, we fix $s>0$ and $p \in L^\infty(\Omega \times (-T,T))$ and denote $Z[s,p], Y[s,p], U[s,p]$ by $Z,Y,U$.

The Euler-Lagrange equation given by the minimization of $K_{s,p}$ is as follows: for all $z\in \mathcal{T}$,
\begin{multline}\label{EL}
		\frac 1s\int_{-T}^{T} \int_{\Omega} e^{2s\varphi} (\partial_{t}^2 z - \Delta z+pz) (\partial_t^2 Z- \Delta Z +pZ)\,dxdt
		+\int_{-T}^{T} \int_{\Gamma_0} e^{2s\varphi}\partial_\nu z  \partial_\nu Z \,d\sigma dt\\
		+ \langle (y_0^{-T},y_1^{-T}),(z(-T),\partial_t z(-T))\rangle_{(L^2\times H^{-1})\times(H^1_0\times L^2)} = 0.
\end{multline}
Therefore, with $Y$ and $U$ as in \eqref{LinkControlPb-Adjoint}, we obtain, for all $z \in \mathcal{T}$,
\begin{multline*}
	\int_{-T}^{T} \int_{\Omega}  (\partial_{t}^2 z - \Delta z+pz)Y \,dxdt
	+\int_{-T}^{T} \int_{\Gamma_0} \partial_\nu z  U \,d\sigma dt
	\\
	+ \langle (y_0^{-T},y_1^{-T}),(z(-T),\partial_t z(-T))\rangle_{(L^2\times H^{-1})\times(H^1_0\times L^2)} = 0.
\end{multline*}
But this  is precisely the dual formulation of equation \eqref{EqYControl} and integrations by parts yield, for all $z \in \mathcal{T}$,
\begin{multline*}
	 \int_{-T}^{T} \int_{\Omega}  z(\partial_{t}^2 - \Delta + p)Y \,dxdt
	+\int_{-T}^{T} \int_{\partial\Omega} \partial_\nu z (U \fcar_{\Gamma_0}- Y ) \,d\sigma dt
	\\
	 + \left\langle \left( (y_0^{-T},y_1^{-T}) - (Y(-T), \partial_t Y(-T)\right),(z(-T), \partial_t z(-T))\right\rangle_{(L^2\times H^{-1})\times(H^1_0\times L^2)} 
	\\
	 + \left\langle (Y(T), \partial_t Y(T),(z(T),\partial_t z(T))\right\rangle_{(L^2\times H^{-1})\times(H^1_0\times L^2)} 
	 =0 .
\end{multline*}
This implies that $Y = Y[s,p]$ solves \eqref{EqYControl}--\eqref{ExactControllability} with control function $U = U[s,p]$.
\end{proof}

\subsection{Dependence of the controls with respect to the potentials}\label{SecProofThmControl}
We are now in position to prove Theorem \ref{ThmErrorRelative}.

\begin{proof}[Proof of Theorem \ref{ThmErrorRelative}]
In order to simplify the notations, we set $L^i=\partial_t^2 - \Delta + p^i$, $Z^i=Z[s,p^i]$, $Y^i = Y[s,p^i]$ and $U^i=U[s,p^i]$ for $i \in \{a, b\}$.
Using \eqref{LinkControlPb-Adjoint}, we are going to bound the following expression:
	\begin{align}
		s \int_{-T}^{T} \int_{\Omega} &e^{-2s\varphi}|Y^a - Y^b|^2\,dxdt 
		+  \int_{-T}^{T} \int_{\Gamma_0} e^{-2s\varphi} |U^a - U^b |^2\,dxdt \nonumber\\
		& = \frac 1s \int_{-T}^{T} \int_{\Omega} e^{2s\varphi} |L^a Z^a - L^b Z^b|^2 \,dxdt
		+  \int_{-T}^{T} \int_{\Gamma_0} e^{2s\varphi} |\partial_\nu Z^a - \partial_\nu Z^b|^2\,dxdt \nonumber\\
		& = \frac 1s\int_{-T}^{T} \int_{\Omega} e^{2s\varphi} \left(|L^a Z^a|^2 + |L^b Z^b|^2 - 2 L^aZ^a L^bZ^b\right)dxdt\nonumber\\
		&\quad +  \int_{-T}^{T} \int_{\Gamma_0} e^{2s\varphi} \left(|\partial_\nu Z^a|^2 + |\partial_\nu Z^b|^2 - 2\partial_\nu Z^a\partial_\nu Z^b\right)dxdt.
		\label{star}
	\end{align}
We recall first the Euler-Lagrange equation \eqref{EL} associated to the trajectory $Z^i$, for $i \in \{a,b\}$:
\begin{multline}
\label{eq4}
	 	\frac 1s\int_{-T}^{T} \int_{\Omega} e^{2s\varphi} L^iz L^iZ^i\,dxdt
		+\int_{-T}^{T} \int_{\Gamma_0} e^{2s\varphi}\partial_\nu z  \partial_\nu Z^i \,dxdt
		\\
		+ \langle (y_0^{-T},y_1^{-T}),(z(-T),\partial_t z(-T))\rangle_{(L^2\times H^{-1})\times(H^1_0\times L^2)} =0
\end{multline}
We apply (\ref{eq4}) to $z=Z^i$ and $z=Z^j$, $j \neq i$ and obtain the following equations for $(i,j) \in \{a,b\}^2$:
	\begin{multline}\label{eq5}
	 	 \frac 1s \int_{-T}^{T} \int_{\Omega} e^{2s\varphi} |L^iZ^i|^2 \,dxdt
		+
		\int_{-T}^{T} \int_{\Gamma_0} e^{2s\varphi} |\partial_\nu Z^i|^2 \,dxdt
		\\
		+ \langle (y_0^{-T},y_1^{-T}),(Z^i(-T),\partial_t Z^i(-T))\rangle_{(L^2\times H^{-1})\times(H^1_0\times L^2)}=0;
	\end{multline}
	\begin{multline}\label{eq6}
	 	 \frac 1s \int_{-T}^{T} \int_{\Omega} e^{2s\varphi} L^i Z^j L^iZ^i\,dxdt
		+
		\int_{-T}^{T} \int_{\Gamma_0 }e^{2s\varphi} \partial_\nu Z^j  \partial_\nu Z^i \,dxdt 
		\\
		+ 
		\langle (y_0^{-T},y_1^{-T}),(Z^j(-T),\partial_t Z^j(-T))\rangle_{(L^2\times H^{-1})\times(H^1_0\times L^2)}=0.
	\end{multline}

Summing (\ref{eq5}) for $i=a$ and $i=b$ and subtracting from it (\ref{eq6}) for $(i,j)=(a,b)$ and $(i,j)=(b,a)$, we get:
	\begin{multline*}
		 \frac 1s \int_{-T}^{T} \int_{\Omega} e^{2s\varphi} \left(|L^aZ^a|^2 + |L^bZ^b|^2 
		- L^aZ^b L^aZ^a - L^bZ^a L^bZ^b\right)dxdt\\
		+\int_{-T}^{T} \int_{\Gamma_0} e^{2s\varphi} \left(|\partial_\nu Z^a|^2 
		+ |\partial_\nu Z^b|^2 -2 \partial_\nu Z^a  \partial_\nu Z^b\right)dxdt=0.
	\end{multline*}
Thus, making use of $ L^i Z^j = L^jZ^j + (p^i-p^j)Z^j $, we have from \eqref{star}:
	\begin{eqnarray*}
		&&s\int_{-T}^{T} \int_{\Omega} e^{-2s\varphi}|Y^a - Y^b|^2 \,dxdt
		+\int_{-T}^{T} \int_{\Gamma_0} e^{-2s\varphi} |U^a - U^b|^2  \,dxdt	\\
		&=&\frac 1s \int_{-T}^{T} \int_{\Omega} e^{2s\varphi} (L^aZ^b L^aZ^a 
		+ L^bZ^a L^bZ^b - 2 L^aZ^a L^bZ^b)\,dxdt\\
		&=& \frac 1s \int_{-T}^{T} \int_{\Omega} e^{2s\varphi} (p^b-p^a) (Z^b L^aZ^a -Z^a L^bZ^b)\,dxdt.	
	\end{eqnarray*}
Finally, we obtain
\begin{multline}
	\label{AlmostDone}
	s \int_{-T}^{T} \int_{\Omega}e^{-2s\varphi}|Y^a - Y^b|^2\,dxdt 
		+ \int_{-T}^{T} \int_{\Gamma_0} e^{-2s\varphi} |U^a - U^b|^2 \,dxdt	
		\leq \frac{1}{s^{5/2}} \|p^a-p^b\|_{L^\infty(\Omega \times (-T,T))}
	\\	
	 \left( 
	 \int_{-T}^{T} \int_{\Omega}e^{2s\varphi}  (|L^aZ^a|^2 + |L^bZ^b|^2) \,dxdt
	+
	s^3 \int_{-T}^{T} \int_{\Gamma_0} e^{2s\varphi}  (|Z^a|^2 + |Z^b|^2)\,dxdt \right).
\end{multline}

Applying the Carleman estimate of Theorem \ref{ThmCarleman} for $i \in \{a,b\}$, we have:
$$	
	s^3 \int_{-T}^{T}\int_\Omega e^{2s\varphi} |Z^i|^2 \,dxdt 
	\leq M \int_{-T}^T \int_\Omega e^{2s\varphi} |L^i Z^i|^2 \,dxdt 
	+M s\int_{-T}^T \int_{\Gamma_0} e^{2s\varphi} |\partial_\nu Z^i|^2\,d\sigma dt.
$$
Of course, this implies
\begin{multline*}
	 \int_{-T}^{T} \int_{\Omega}e^{2s\varphi}  |L^iZ^i|^2  \,dxdt
	+
	s^3 \int_{-T}^{T} \int_{\Omega} e^{2s\varphi}  |Z^i|^2 \,dxdt 
	\\
	\leq 
	M \int_{-T}^T \int_\Omega e^{2s\varphi} |L^i Z^i|^2 \,dxdt 
	+M s\int_{-T}^T \int_{\Gamma_0} e^{2s\varphi} |\partial_\nu Z^i|^2\,d\sigma dt
\end{multline*}
and since
\begin{eqnarray*}
	&&  \int_{-T}^T \int_\Omega e^{2s\varphi} |L^i Z^i|^2 \,dxdt 
	+ s\int_{-T}^T \int_{\Gamma_0} e^{2s\varphi} |\partial_\nu Z^i|^2\,d\sigma dt\\
	&=&  s^2 \int_{-T}^T \int_\Omega e^{-2s\varphi} |Y^i|^2 \,dxdt 
	+ s\int_{-T}^T \int_{\Gamma_0} e^{-2s\varphi} |U^i|^2\,d\sigma dt,
\end{eqnarray*}
it can be rewritten as
\begin{multline}
	\label{EstVenantDeCarl}
	 \int_{-T}^{T} \int_{\Omega}e^{2s\varphi}  |L^iZ^i|^2  \,dxdt+ s^3 \int_{-T}^{T}\int_\Omega e^{2s\varphi} |Z^i|^2 \,dxdt 
	\\
	\leq 
	M s \left( s \int_{-T}^{T} \int_{\Omega}e^{-2s\varphi}|Y^i|^2\,dxdt 
		+  \int_{-T}^{T} \int_{\Gamma_0} e^{-2s\varphi} |U^i|^2 \,dxdt\right).
\end{multline}
Combining \eqref{AlmostDone} and \eqref{EstVenantDeCarl}, we obtain the desired estimate \eqref{ErrorRelative}.
\end{proof}

\section{Application to an inverse problem}\label{SecInversePb}

Let us consider the inverse problem defined in \eqref{EqW}. In this section, we shall propose an algorithm based on the Carleman estimate \eqref{CarlemT-0} and a data assimilation approach. Let us recall that the unknown is the potential $Q = Q(x)$, that we aim at recovering from the measurement of the normal derivative of the solution $W[Q]$ of \eqref{EqW} on $\Gamma_0 \times (0,T)$. We also assume that $Q \in L^\infty_{\leq m}(\Omega)$ for some given constant $m>0$. 

Let us also mention that we are working under the geometrical assumptions \eqref{GCC-multiplier}--\eqref{GCC-Time}, and the function $\varphi$ we shall consider below always satisfies \eqref{GCC-Time-Beta} such that Theorem \ref{ThmCarleman-t=0} holds when the parameter $s$ is large enough.

As said in the introduction, one can find in \cite{Baudouin01} the proof of the fact that the additional information $\partial_\nu W[Q]$ on $\Gamma_0 \times (0,T)$ allows to identify $Q$ uniquely within the class of potentials in $L^\infty_{\leq m}(\Omega)$. Our approach will go further, providing an explicit algorithm to compute $Q$.

Our goal is indeed to prove that Algorithm \ref{Algo}, presented in Section~\ref{SubsecIntroInversePb}, is convergent when $s$ is large enough, as described in Theorem \ref{Thm-Convergence}.

In the following, we shall first present the idea underlying this algorithm. We will then focus on the proof of Theorem \ref{ThmDependenceG} which is the main step within the proof of the convergence result of Theorem \ref{Thm-Convergence}.
\subsection{The general idea}\label{SectionAlgorithm}

Algorithm \ref{Algo} is based on the fact that if $W$ is the solution of equation \eqref{EqW} and $w[q^k]$ solves \eqref{Eqwk}, 
then 
\begin{equation}
	\label{z-k-exact}
	z^k = \partial_t \left( w[q^k] - W[Q]\right)
\end{equation}
solves
\begin{equation}
	\label{EqExacteWk}
	\left\{
		\begin{array}{ll}
					\partial_{t}^2 z^k - \Delta z^k + q^k z^k = g^k, & \tn{in } \Omega \times (0,T), \\
					z^k = 0, & \tn{on } \partial \Omega \times (0,T), 
				\\
					z^k(0) = 0, \quad \partial_t z^k(0) = z_1^k, \qquad& \tn{in } \Omega,
		\end{array}
	\right.	
\end{equation}
where
\begin{equation}
	\label{TrucsEnk}
	g^k =   (Q- q^k) \partial_t W[Q], \quad  \quad z_1^k =  (Q- q^k) w_0,
\end{equation}
and by definition, 
\begin{equation}
	\label{Observation-k}
		 \mu^k = \partial_\nu z^k \tn{ on }\Gamma_0 \times (0,T).
\end{equation}
Of course, both variables $g^k$ and $z_1^k$ are unknown, but the variable $g^k$ brings lower order information than $\mu^k$. This fact is actually the milestone of the proof of Theorem \ref{Thm-Convergence}. We shall then try to approximate $z_1^k$ through the additional information \eqref{Observation-k} and let the source term free, as it is in the functional $J_{s,q^k}[\mu^k,0]$ in \eqref{FunctionalJ-k}. 

Note that this idea is behind the proofs of stability by compactness uniqueness arguments as in \cite{PuelYam96,PuelYam97,Yam99,StefanovUhlmann2011} or by Carleman estimates given in \cite{Baudouin01,ImYamIP01,ImYamCom01,ImYamIP03}.

\subsection{Study of the functional $J_{s,q}[\mu,g]$}

We  first give a functional setting for the minimization of the functional $J_{s,q}[\mu,g]$ given in \eqref{DefFunctional-J-DataAss}, that we recall below for the convenience of the reader:
\begin{equation*}
	J_{s,q}[\mu,g](z) = \frac{1}{2s} \int_0^T \int_{\Omega} e^{2s \varphi} |\partial_{t}^2 z - \Delta z + q z - g|^2\,dxdt
	+ \frac{1}{2} \int_0^T \int_{\Gamma_0} e^{2s \varphi} | \partial_\nu z - \mu |^2 \ d\sigma dt,
\end{equation*}
defined on the trajectories $z$ such that $z \in L^2(0,T;H_0^1(\Omega))$, $\partial_{t}^2 z - \Delta z + q z \in L^2(\Omega \times (0,T))$, $\partial_\nu z \in L^2(\Gamma_0 \times (0,T))$ and $z(\cdot, 0) = 0$ in $\Omega$.

Of course, it is very close to the functional $K_{s,p}$ and we shall therefore introduce the space 
\begin{multline}
	\mathcal{T}^+ = \Big\{z \in L^2(0,T; H_0^1(\Omega)), \quad \hbox{with }  \partial_{t}^2 z - \Delta z \in L^2(\Omega \times (0,T)), 
	\\
	z(\cdot , 0) = 0 \hbox{ in } \Omega \quad \hbox{ and } \partial_\nu z \in L^2(\Gamma_0 \times (0,T))\Big\}, 
\end{multline}
and the family of norms
$$
	\| z\|^2_{\textnormal{obs}^+,s,q} = \frac{1}{s} \int_{0}^{T}\int_\Omega e^{2s\varphi} |\partial_{t}^2 z- \Delta z + q z|^2 \,dxdt
	+\int_{0}^{T}\int_{\Gamma_0} e^{2s\varphi}|\partial_\nu z|^2 \,d\sigma dt.
$$
Note that these are norms for all $s>0$ and $q \in L^\infty( \Omega)$ according to Theorem \ref{ThmCarleman-t=0}.

The properties of the space $\mathcal{T}^+$ and the family of norms $\| \cdot \|_{\textnormal{obs}^+,s,q}$ are of course completely similar to the ones of $\mathcal{T}$ in \eqref{Space-T}  endowed with the family of norms $\| \cdot \|_{\textnormal{obs},s,p}$ introduced in \eqref{Norm-T-s-p}. 
Therefore, we refer the reader to Section \ref{SecCont} for remarks and comments on $\left(\mathcal{T}^+, \| \cdot\|_{\textnormal{obs}^+,s,q}\right)$.\\

The first result states the well-posedness of the minimization problem of $J_{s,q}[\mu,g]$. 
\begin{proposition}
	\label{ThmJ-DataAssimilation}
	Assume the multiplier condition \eqref{GCC-multiplier} and the time condition \eqref{GCC-Time-Beta}.
	Assume also $\mu \in L^2(\Gamma_0 \times (0,T))$ and $g \in L^2(\Omega \times (0,T))$.
	
	Then, for all $s >0$ and $q \in L^\infty(\Omega)$, the functional $J_{s,q}[\mu,g]$ defined in \eqref{DefFunctional-J-DataAss} 
 is continuous, strictly convex and coercive on 
$(\mathcal{T}^+, \|\cdot \|_{\textnormal{obs}^+,s,q})$. 
The functional $J_{s,q}[\mu,g]$ therefore admits a unique minimizer $Z$ in $\mathcal{T}^+$. 

Besides, for all data $(\mu,g) \in L^2(\partial\Omega\times(0,T)) \times L^2(\Omega \times (0,T))$, 
the minimizer $Z$ of $J_{s,q}[\mu,g]$ satisfies:
	 \begin{equation*}
	\| Z\|^2_{\textnormal{obs}^+,s,q} 
	\leq \frac{4}{s}\int_{0}^{T} \int_{\Omega} e^{2s\varphi}|g|^2\,dxdt 
		 + 4 \int_{0}^{T} \int_{\Gamma_0} e^{2s\varphi} \left|\mu\right|^2\,d\sigma dt .
	\end{equation*}
\end{proposition}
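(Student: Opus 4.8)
The plan is to follow exactly the pattern of the proof of Proposition~\ref{PropControle} for $K_{s,p}$, the only structural difference being that here the data $(\mu,g)$ enter inside the squared terms rather than through a separate duality pairing. Throughout I abbreviate $Lz = \partial_t^2 z - \Delta z + qz$. Expanding the squares in \eqref{DefFunctional-J-DataAss}, one gets
\[
J_{s,q}[\mu,g](z) = \frac{1}{2}\|z\|^2_{\textnormal{obs}^+,s,q} - \ell(z) + \frac{1}{2}\left(\frac{1}{s}\int_0^T\int_\Omega e^{2s\varphi}|g|^2\,dxdt + \int_0^T\int_{\Gamma_0} e^{2s\varphi}|\mu|^2\,d\sigma dt\right),
\]
where the linear form $\ell$ is
\[
\ell(z) = \frac{1}{s}\int_0^T\int_\Omega e^{2s\varphi}(Lz)\,g\,dxdt + \int_0^T\int_{\Gamma_0} e^{2s\varphi}(\partial_\nu z)\,\mu\,d\sigma dt.
\]
The quadratic part is exactly $\tfrac12\|z\|^2_{\textnormal{obs}^+,s,q}$, and $\|\cdot\|_{\textnormal{obs}^+,s,q}$ is a genuine norm on $\mathcal{T}^+$ by Theorem~\ref{ThmCarleman-t=0}: the estimate \eqref{CarlemT-0} shows that this quantity controls the full $L^2(0,T;H^1_0(\Omega))$ trajectory and vanishes only for $z\equiv 0$. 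Hence $J_{s,q}[\mu,g]$ is a continuous, strictly convex quadratic functional on $(\mathcal{T}^+,\|\cdot\|_{\textnormal{obs}^+,s,q})$.

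For coercivity, I would bound $\ell$ by Cauchy--Schwarz, grouping the interior and boundary contributions, to obtain
\[
|\ell(z)| \leq \|z\|_{\textnormal{obs}^+,s,q}\left(\frac{1}{s}\int_0^T\int_\Omega e^{2s\varphi}|g|^2\,dxdt + \int_0^T\int_{\Gamma_0} e^{2s\varphi}|\mu|^2\,d\sigma dt\right)^{1/2},
\]
so that $J_{s,q}[\mu,g](z)\geq \tfrac12\|z\|^2_{\textnormal{obs}^+,s,q} - C\|z\|_{\textnormal{obs}^+,s,q}$ tends to $+\infty$ as $\|z\|_{\textnormal{obs}^+,s,q}\to\infty$. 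Being continuous, strictly convex and coercive on the Hilbert space $(\mathcal{T}^+,\|\cdot\|_{\textnormal{obs}^+,s,q})$, the functional admits a unique minimizer $Z$ by the direct method. The one point deserving care — and the analogue of the only delicate issue in Proposition~\ref{PropControle} — is the completeness of $\mathcal{T}^+$ for this norm; this is handled exactly as for $\mathcal{T}$ in Section~\ref{SecCont}, Theorem~\ref{ThmCarleman-t=0} again guaranteeing that $\|\cdot\|_{\textnormal{obs}^+,s,q}$ controls the trajectory.

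Finally, for the quantitative bound I would write the Euler--Lagrange equation, which states that for every $z\in\mathcal{T}^+$,
\[
\frac{1}{s}\int_0^T\int_\Omega e^{2s\varphi}(Lz)(LZ - g)\,dxdt + \int_0^T\int_{\Gamma_0} e^{2s\varphi}(\partial_\nu z)(\partial_\nu Z - \mu)\,d\sigma dt = 0,
\]
and then choose $z = Z$. This yields $\|Z\|^2_{\textnormal{obs}^+,s,q} = \ell(Z)$, and applying the Cauchy--Schwarz bound on $\ell$ above gives $\|Z\|_{\textnormal{obs}^+,s,q}\leq \big(\tfrac1s\int_0^T\int_\Omega e^{2s\varphi}|g|^2 + \int_0^T\int_{\Gamma_0} e^{2s\varphi}|\mu|^2\big)^{1/2}$, hence the announced estimate (in fact with the sharper constant $1$ in place of $4$). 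I expect no genuine obstacle here: everything is a direct transcription of the control setting, the only subtlety being the bookkeeping ensuring the completeness of the trajectory space, which the paper has already set up by analogy with $\mathcal{T}$.
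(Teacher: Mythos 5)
Your proposal is correct, and the first part (continuity, strict convexity, coercivity, existence and uniqueness of the minimizer via the direct method) matches the paper, which simply declares these points straightforward; like the paper, you do not actually verify completeness of $(\mathcal{T}^+,\|\cdot\|_{\textnormal{obs}^+,s,q})$, though you at least flag it. Where you genuinely diverge is in the quantitative bound. The paper does not use the Euler--Lagrange equation at this stage: it only compares values, writing $J_{s,q}[\mu,g](Z)\leq J_{s,q}[\mu,g](0)$, expanding the squares, and absorbing the cross terms via $2ab\leq a^2/2+2b^2$, which is what produces the constant $4$ in the statement. You instead differentiate at the minimizer, take $z=Z$ in the Euler--Lagrange identity to get $\|Z\|^2_{\textnormal{obs}^+,s,q}=\ell(Z)$, and conclude by Cauchy--Schwarz on the weighted product space; your grouping of the interior and boundary terms into a single Cauchy--Schwarz is valid, and it yields the estimate with constant $1$, which is sharper than (and hence implies) the stated bound. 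Both routes are legitimate --- the paper itself writes the same Euler--Lagrange identity later, in the proof of Theorem~\ref{ThmDependenceG} --- so the only real trade-off is that the paper's variational comparison $J(Z)\leq J(0)$ requires no differentiability discussion at all, while your argument buys a better constant.
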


\begin{proof}
The continuity, strict convexity and  coercivity of the functional $J_{s,p}[\mu,g]$ is straightforward and left to the reader.

To get estimates on the minimizer $Z$, we use $J_{s,q}[\mu,g](Z) \leq J_{s,q}[\mu,g](0)$:
\begin{multline*}
	\frac{1}{s} \int_0^T \int_{\Omega} e^{2s \varphi} |\partial_{t}^2 Z - \Delta Z + q Z - g|^2\,dxdt
	+ 
	 \int_0^T \int_{\Gamma_0} e^{2s \varphi} | \partial_\nu Z - \mu |^2 \ d\sigma dt
	 \\
	 \leq
	 \frac{1}{s}\int_{0}^{T} \int_{\Omega} e^{2s\varphi}|g|^2\,dxdt 
		 +  \int_{0}^{T} \int_{\Gamma_0} e^{2s\varphi} \left|\mu\right|^2\,d\sigma dt .
\end{multline*}
Developing the square on the left hand side, we obtain
\begin{multline*}
	\frac{1}{s} \int_0^T \int_{\Omega} e^{2s \varphi} |\partial_{t}^2 Z - \Delta Z + q Z|^2\,dxdt
	+ 
	 \int_0^T \int_{\Gamma_0} e^{2s \varphi} | \partial_\nu Z |^2 \ d\sigma dt
	 \\
	 \leq
	\frac{2}{s} \int_0^T \int_{\Omega} e^{2s \varphi} (\partial_{t}^2 Z - \Delta Z + q Z) g\,dxdt 
	+ 
	2 \int_0^T \int_{\Gamma_0} e^{2s \varphi} \partial_\nu Z \mu \ d\sigma dt. 
\end{multline*}
Using $2 ab \leq a^2/2 + 2 b^2$, we thus obtain
	 \begin{equation*}
	\frac{1}{2}\| Z\|^2_{\textnormal{obs}^+,s,q} 
	\leq \frac{2}{s}\int_{0}^{T} \int_{\Omega} e^{2s\varphi}|g|^2\,dxdt 
		 + 2 \int_{0}^{T} \int_{\Gamma_0} e^{2s\varphi} \left|\mu\right|^2\,d\sigma dt,
	\end{equation*}
which concludes the proof of Proposition \ref{ThmJ-DataAssimilation}.
\end{proof}

Of course, our goal is not only to prove that the functional $J_{s,q}[\mu,g]$ has a minimum, but rather to study how the minimum of $J_{s,q}[\mu,g]$ depends on the source term $g$. Indeed, $z^k$ in \eqref{z-k-exact} is the minimum of the functional $J_{s,q^k}[\mu^k,g^k]$, whatever $s>0$ is, whereas in the algorithm, $Z^k$ is  the minimizer of the functional $J_{s,q^k}[\mu^k,0]$, see \eqref{FunctionalJ-k}.

This is precisely the goal of Theorem \ref{ThmDependenceG}. As in Section \ref{SecProofThmControl},  we shall rely on the Euler Lagrange equations satisfied by the minimum of the functionals $J_{s,q}[\mu, g^a]$ and $J_{s,q}[\mu,g^b]$. 

\begin{proof}[Proof of Theorem \ref{ThmDependenceG}] 
Let us write the Euler Lagrange equations satisfied by $Z^j$, for $j \in \{a, b\}$: 
\begin{multline}
	\label{EulerLagrangeIdentity}
	\frac{1}{s} \int_0^T \int_{\Omega} e^{2s \varphi} (\partial_t^2 Z^j - \Delta Z^j + q Z^j - g^j) (\partial_t^2 z - \Delta z + q z) \,dxdt\\
	+ \int_0^T \int_{\Gamma_0} e^{2s \varphi} (\partial_\nu Z^j - \mu) \partial_\nu z \, d\sigma dt =0,
\end{multline}
for all $z\in \mathcal{T}^+$. Applying \eqref{EulerLagrangeIdentity} for $j= a$ and $j= b$ to $z = Z^a - Z^b$  and subtracting the two identities, we obtain:
\begin{multline*}
	 \frac{1}{s} \int_0^T \int_{\Omega} e^{2 s \varphi} |\partial_t^2 z - \Delta z + q z|^2\, dxdt  
	+ \int_0^T \int_{\Gamma_0} e^{2s \varphi} |\partial_\nu z|^2 \, d\sigma dt\\
	= \frac{1}{s} \int_0^T \int_{\Omega} e^{2 s \varphi} (g^a - g^b) (\partial_t^2 z - \Delta z + q z) \,dxdt .
\end{multline*} 
This implies that
\begin{multline}
	\label{EstErrorsF}
	\frac{1}{2} \int_0^T \int_{\Omega} e^{2 s \varphi}  |\partial_t^2 z - \Delta z + q z|^2 \,dxdt  
	+ s \int_0^T \int_{\Gamma_0} e^{2s \varphi} |\partial_\nu z|^2 \, d\sigma dt 	\\
	\leq \frac{1}{2}  \int_0^T \int_{\Omega} e^{2 s \varphi} |g^a- g^b|^2\,dxdt .
\end{multline}
But the left hand side of \eqref{EstErrorsF} precisely is the right hand side of the Carleman estimate \eqref{CarlemT-0}. Hence, applying Theorem \ref{ThmCarleman-t=0} to $z$, we immediately deduce \eqref{EstMin-s}.
\end{proof}

\subsection{Convergence of Algorithm 1}\label{SectionConvergence}
Let us now focus on the proof of Theorem \ref{Thm-Convergence}.
\begin{proof}[Proof of Theorem \ref{Thm-Convergence}] We use Theorem \ref{ThmDependenceG} since, as we explained, we have to compare the minimum $Z^k$ of $J_{s,q^k}[\mu^k, 0]$ with $z^k = \partial_t (w[q^k] - W[Q])$ solution of  \eqref{EqExacteWk}, which corresponds to the minimum of $J_{s,q^k}[\mu^k, g^k]$. Note that this requires $q^k \in L^\infty_{\leq m} (\Omega)$, which is guaranteed at each step of the algorithm by \eqref{Q-k+1}. We obtain
\begin{equation}
	\label{Est-Diff1}
	s^{1/2} \int_\Omega e^{2 s \varphi(0)} |\partial_t Z^k(0) - \partial_t z^k(0) |^2 \, dx  \leq M \int_0^T \int_{\Omega } e^{2 s \varphi} |g^k|^2\, dxdt.
\end{equation}
But, from \eqref{Q-k-tilde} and \eqref{TrucsEnk}, 
$$
	\partial_t Z^k(\cdot,0) =( \tilde q^{k+1} - q^k) w_0, \quad \partial_t z^k(\cdot,0) = (Q - q^k) w_0 \quad \tn{and}\quad g^k = (Q - q^k) \partial_t W[Q].
$$
Therefore, since $\varphi(\cdot, t)\leq\varphi(\cdot,0)$ for all $t\in(0,T)$, estimate \eqref{Est-Diff1} reads:
$$
	s^{1/2}\int_\Omega  e^{2 s \varphi(0)} |w_0|^2 (\tilde q^{k+1} - Q)^2\, dx \leq M \|\partial_t W[Q]\|_{L^2(0,T; L^\infty(\Omega))}^2 \int_{\Omega} e^{2 s \varphi(0)} (q^k - Q)^2\, dx.
$$
Of course, using the strict positivity \eqref{Positivity} of $w_0$, this yields in particular that
\begin{equation}
	\label{EstimeesErreur-Tilde}
	\int_\Omega e^{2 s \varphi( 0)} (\tilde q^{k+1} - Q)^2\, dx \leq \frac{M}{s^{1/2}} \frac{\|\partial_t W[Q]\|_{L^2(0,T; L^\infty(\Omega))}^2}{\left( \inf_\Omega |w_0| \right)^2} \int_{\Omega} e^{2 s \varphi( 0)} (q^k - Q)^2\, dx.
\end{equation}
Since $T_m$ defined in \eqref{Q-k+1} is Lipschitz and $T_m(Q) = Q$ (because $Q \in L^\infty_{\leq m} (\Omega)$), we have 
$| q^{k+1} - Q|  = |T_m(\tilde q^{k+1})- T (Q)| \leq |\tilde q^{k+1} - Q|$, from which we immediately deduce \eqref{EstimeesConvergence} and conclude to the convergence of Algorithm 1 for $s$ large enough. 
\end{proof}
%

\section{Conclusion}
As a conclusion, let us formulate a few comments and highlight some remaining open problems.\\

{\bf On the geometrical conditions.} Our strategy requires the use of Carleman estimates, and in particular the conditions \eqref{GCC-multiplier} and \eqref{GCC-Time}. But these conditions are much stronger than the classical {\it Geometric Control Condition} (GCC) introduced in \cite{Bardos}. Whether or not similar results as the ones presented above apply when only the GCC holds is an open problem. In particular, to our knowledge, the only stability result in inverse problem proved using micro local analysis is the recent work \cite{StefanovUhlmann2011}, which requires the GCC and the convexity of the whole boundary.\\

{\bf Smoothness of Controls.} The control process proposed in Section \ref{SubsecIntroControl} does not fit in the framework developed in \cite{ErvedozaZua10} which proves that using the Hilbert Uniqueness Method (HUM) (slightly modified by the introduction of a smooth cut-off function in time) to compute the controls, if the data to be controlled is smooth, then the corresponding control and controlled trajectory are smooth. 
Therefore, new questions arise: 

$\bullet$ Does the control process in Section \ref{SubsecIntroControl} enjoy smoothness properties similar to the ones of the classical HUM control? Note that these regularity properties arise naturally when considering the control properties of semi-linear wave equations - see \cite{DehmanLebeau2009} - or when deriving convergence rates for the discrete controls, as explained in \cite{ErvZuaCime}. 

$\bullet$ How does the usual HUM control process depend on the potentials of the wave equation? \\

{\bf Numerics and inverse problems.} Recently, in \cite{BaudouinErvedoza11}, we have proved discrete Carleman estimates for the space semi-discrete $1$-d wave equation discretized using finite differences. There, following the results on the observability of discrete waves - see e.g. \cite{ErvZuaCime} -, a new term has been added to make the Carleman estimates uniform with respect to the discretization parameter. This term, somehow corresponding to some kind of Tychonoff regularization of the Carleman estimates, is needed due to spurious waves created by the discretization process. Based on these uniform Carleman estimates, we have been able to prove a convergence result for the approximation of a potential in the inverse problem given in Section \ref{SubsecIntroInversePb}, provided a Tychonoff regularization term is added in the process. 
\\
It would then be completely natural to try to adapt the algorithm developed here in the continuous case to the space semi-discrete schemes and in numerics. This is currently under investigation. \\

\noindent{\bf Acknowledgements.} \\
The authors thank Jean-Pierre Puel and Belhassen Dehman for their encouragements concerning that work. The authors also wish to thank Institut Henri Poincar\'e (Paris, France) for providing a very stimulating environment during the ``Control of Partial and Differential Equations and Application'' program in the Fall 2010.

\bibliographystyle{plain}

\end{document}